\newtheorem{theorem}{Theorem}
\newtheorem*{theorem*}{Theorem}
\newtheorem{lemma}[theorem]{Lemma}
\newtheorem*{lemma*}{Lemma}
\newtheorem{proposition}[theorem]{Proposition}
\newtheorem*{proposition*}{Proposition}
\theoremstyle{definition}
\theoremstyle{remark}
\newcommand{\id}{\operatorname{id}}
\newcommand{\SP}{\mathrm{SP}}
\newcommand{\LL}{\mathrm{L}}
\newcommand{\PL}{\mathrm{PL}}
\newcommand{\EE}{\mathrm{E}}
\newcommand{\tr}{\operatorname{tr}}
\newcommand{\Disc}{\operatorname{Disc}}
\newcommand{\RE}{\operatorname{Re}}
\newcommand{\IM}{\operatorname{Im}}
\newcommand{\E}{\mathbf{E}}     
\newcommand{\Stab}{\operatorname{Stab}}
\newcommand{\bC}{\mathbb{C}}
\newcommand{\bR}{\mathbb{R}}
\newcommand{\zed}{\mathbb{Z}}
\newcommand{\GL}{\mathrm{GL}}
\newcommand{\SL}{\mathrm{SL}}
\newcommand{\SO}{\mathrm{SO}}
\newcommand{\sF}{{\mathscr{F}}}
\newcommand{\sL}{{\mathscr{L}}}
\newcommand{\sS}{{\mathscr{S}}}
\title{Subconvexity of twisted Shintani zeta functions}
\author{Robert D. Hough}
\address{Department of Mathematics, Stony Brook University, 100 Nicolls Road, 
Stony Brook, NY 11794}
\email{robert.hough@stonybrook.edu}
\author{Eun Hye Lee}
\address{Department of Mathematics, Stony Brook University, 100 Nicolls Road, 
Stony Brook, NY 11794}
\email{eunhye.lee@stonybrook.edu }
\subjclass[2010]{Primary 11M41, 11N45, 11N64, 11F12, 11H06, 11E45, 12F05, 43A85, 42B20}
 \keywords{Subconvexity, cubic ring, Maass form, space of lattices, 
zeta function, prehomogeneous vector space, oscillatory integral}
\begin{document}
 
 \begin{abstract}
 Previously the authors proved subconvexity of Shintani's zeta function enumerating class numbers of binary cubic forms.  Here we return to prove subconvexity of the Maass form twisted version.
 \end{abstract}

 \thanks{This material is based upon work supported by the National Science
Foundation under agreement DMS-1802336. Any opinions, findings and
conclusions or recommendations expressed in this material are those of the
authors and do not necessarily reflect the views of the National Science
Foundation.}

\thanks{Robert Hough is supported by an Alfred P. Sloan Foundation Research 
Fellowship and a Stony Brook Trustees Faculty Award}
\maketitle

\section{Introduction}
In a previous paper \cite{HL22}, the authors proved the subconvexity of Shintani's zeta function enumerating class numbers of binary cubic forms.  The purpose of this article is to demonstrate that the same approach proves subconvexity for the Maass cusp form twisted version of the zeta function \cite{H17}. This is of interest because F. Sato \cite{S94} has demonstrated a general method of twisting zeta functions of prehomogeneous vector spaces with automorphic forms, creating a large class of interesting arithmetic objects, some of which can be proved to be subconvex in this way. 

Let $V_\zed = \{f(x,y) = ax^3 + bx^2 y + cxy^2 + dy^3: a,b,c,d \in \zed\}$ be the space of integral binary cubic forms and let  $\phi$ be a Maass cusp form for $\SL_2(\zed)\backslash \SL_2(\bR)$.  The cusp form twisted zeta function is
\begin{equation}
\sL^{\pm}(\phi,s) := \sum_{\substack{f \in \SL_2(\zed)\backslash V_\zed\\ \pm \Disc(f) >0}} \frac{\phi(f)}{|\Stab(f)|} \frac{1}{|\Disc(f)|^s}, \qquad \RE(s)>1.
\end{equation}
Although this zeta function does not satisfy a known functional equation, its analytic growth is degree four as for the untwisted zeta function and it has holomorphic continuation to $\bC$.  Parallel to the subconvex estimate for the Shintani zeta function, we prove the following subconvex estimate on the critical line for the twisted version.
\begin{theorem}\label{twisted_theorem}
 The twisted Shintani zeta function satisfies the subconvex bound, for any $\epsilon>0$,
 \begin{equation}
  \sL^{\pm}\left(\frac{1}{2} + i\tau, \phi\right) \ll_{\epsilon, \phi} \tau^{\frac{26}{27} +\epsilon}
 \end{equation}
as $\tau \to \infty$.
\end{theorem}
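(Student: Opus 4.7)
The plan is to follow the blueprint of \cite{HL22}, treating the Maass form $\phi$ as a slowly varying weight on the space of lattices. First, smooth dyadic partitioning and Mellin inversion reduce Theorem \ref{twisted_theorem} to bounding
\[
S(X) := \sum_{\substack{f \in \SL_2(\zed) \backslash V_\zed \\ \pm \Disc(f) > 0}} \frac{\phi(f)}{|\Stab(f)|}\, w\!\left( \frac{|\Disc(f)|}{X} \right)
\]
for a smooth compactly supported $w$ and dyadic scales $X$ up to roughly $\tau^{2+\epsilon}$; the desired subconvexity then follows from a power saving over the trivial bound $O_\phi(X^{1+\epsilon})$ in the appropriate range of $X$.

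Next, I would realize $S(X)$ geometrically by unfolding against a section of the $\SL_2(\bR)$-action on $V(\bR)$, producing
\[
S(X) = \int_{\SL_2(\zed) \backslash \SL_2(\bR)} \phi(g)\, \Theta_w(g, X)\, dg,
\]
where $\Theta_w$ is the smoothed lattice-counting kernel whose analysis is the heart of \cite{HL22}. The Poisson summation in the coefficients $(a,b,c,d)$ of the cubic form and the subsequent stationary phase treatment of the cubic phase function carry over, now with $\phi$ entering only as a smooth amplitude. Because $\phi$ is cuspidal and thus rapidly decaying high in the cusp, $g$ may be truncated to a region of bounded height at negligible cost, where $\phi$ and its derivatives are controlled by a Sobolev norm depending only on the spectral parameter of $\phi$. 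Optimizing $X$ then recovers the exponent $\tfrac{26}{27} + \epsilon$, identical to the untwisted case.

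The main difficulty is verifying that $\phi$ does not disturb the stationary phase analysis. One must check that the stationary points of the dual oscillatory integrals remain, to leading order, those of \cite{HL22}, that the value of $\phi$ at each such point is absorbed into the implicit constant, and that the Whittaker expansion of $\phi$ in the unipotent direction produces neither spurious cancellation nor shifted stationary points in the $\tau$-aspect. To handle this I would decompose $\phi\cdot \Theta_w$ into a compact contribution, bounded by the $L^\infty$ norm of $\phi$ on a bounded truncation of the fundamental domain, and a cuspidal tail treated via the Fourier expansion of $\phi$ together with Rankin--Selberg bounds on its coefficients. Granted these controls, the remaining oscillatory analysis reduces to that of \cite{HL22} and yields the claimed bound.
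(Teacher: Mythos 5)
Your proposal has the right broad shape --- unfold to an integral over $\SL_2(\zed)\backslash\SL_2(\bR)$, Poisson sum in the form coefficients, run stationary phase, and finish with the averaging/van der Corput argument from \cite{HL22} --- but it glosses over exactly the part the paper identifies as the difficulty, and one of its central simplifying assumptions is false.

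First, your reduction to bounding $S(X)$ for $X$ up to $\tau^{2+\epsilon}$ ``by smooth dyadic partitioning and Mellin inversion'' is not available in the usual form, because the Maass-twisted Shintani zeta function has no known functional equation. In the untwisted case, or for $\GL_n$ $L$-functions, the contour shift on the $\RE(w)<0$ side is controlled by the functional equation; here there is nothing to apply. The paper therefore has to construct the twisted approximate functional equation from scratch, working at the level of the orbital zeta function $\Lambda^{\pm}(s,\phi,F_\pm)$, applying Poisson summation there, and then laboriously extracting the primal and dual weight functions $V$ and $\hat V$ and showing they have the claimed decay and smoothness (Theorem \ref{twisted_afe_theorem}, proved over the bulk of Section \ref{twisted_afe_section}). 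Your outline treats this as routine; it is the majority of the work.

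Second, your claim that $\phi$ ``enters only as a smooth amplitude'' absorbed at stationary points is correct only on the primal side: the test function $F_\pm$ is chosen precisely so that the time-side convolution has $\phi$ as an eigenfunction, giving $V(g_x,y)=\phi(g_x)V(y)$. As the paper notes explicitly, no such test function is known on the Fourier side. After Poisson summation the Maass form is convolved against an oscillatory kernel, and $\hat V(g_\xi,y)$ does \emph{not} factor through $\phi(g_\xi)$. Establishing even the qualitative bounds needed for $\hat V$ requires the full decomposition $E=E_{\SP}+E_{\LL}+E_{\PL}+E_{\EE}$ of the oscillatory integral, the stationary-phase estimates in Lemma \ref{E_est_lemma}, and the Mellin--Bessel evaluations in Lemmas \ref{Psi_PL_lemma} and \ref{double_bessel_mellin_eval}. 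Moreover the term $E_{\LL}$ is supported on $t_1\asymp\tau^{1/2+O(\epsilon)}$, so the blanket truncation of $g$ to bounded height ``at negligible cost'' fails on the dual side and must be argued for. Finally, your outline does not address the singular (degenerate-discriminant) dual orbits, which in the untwisted theory produce the poles of the zeta function and here must be shown to contribute $O_A(\tau^{-A})$; this is Lemma \ref{singular_form_lemma} and requires nontrivial $K$-Bessel computations. Without the approximate functional equation, the analysis of $\hat V$, and the singular-term bound, the proposal does not close.
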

This can be compared to an error term of $\tau^{\frac{98}{99} +\epsilon}$ for the untwisted zeta function, proved in \cite{HL22}.
\subsection{Discussion of method}
Since the twisted version of the Shintani zeta function does not satisfy a known functional equation, a significant part of extending the subconvexity estimate in the untwisted case to the twisted case lies in developing an analogue of the approximate functional equation for this zeta function, developed directly from the theory of prehomogeneous vector space zeta functions.  Once this is developed, methods in the same spirit as those used on the original zeta function suffice to prove the subconvexity.  These include translation between Cartesian and homogeneous coordinates, and van der Corput's method of exponential sums to obtain cancellation in exponential sums.

\section*{Notation}
We use the shorthand $e(x) = e^{2\pi ix}$, $c(x) = \cos(2\pi x)$, $s(x) = \sin(2\pi x)$. On $\bR/\zed$ we use the distance 
\begin{equation}
 \|x\|_{\bR/\zed} = \min_{n \in \zed} |x-n|.
\end{equation}
In $V_{\bR} = \{f(x,y)=ax^3 + bx^2y + cxy^2 + dy^3: a,b,c,d \in \bR\}$ define the infinity ball at $f$ of radius $R$ to be 
\begin{align}
B_R(f) = \{ &a'x^3 + b' x^2y + c' xy^2 + d'y^3: a',b',c',d' \in \bR,\\\notag& \max (|a-a'|,|b-b'|,|c-c'|,|d-d'|) \leq R\}.
\end{align}
The $K$ Bessel function is $K_\nu(x) = \int_0^\infty e^{-x \cosh t}\cosh(\nu t) dt$. The averaging operator $\E_{r \in R} f(r)$ indicates $\frac{1}{|R|} \sum_{r \in R} f(r)$.  We use the following asymptotic notation.  For positive quantities $A, B$ which may depend on the parameter $\tau$, $A = O(B)$ means there is a constant $C>0$ such that $A \leq CB$. This has the same meaning as $A \ll B$.  We write $A \asymp B$ if $A \ll B \ll A$ and $A = o(B)$ if $\lim_{\tau \to \infty} \frac{A}{B} = 0.$ For a function $f$ on $\bR^+$, the Mellin transform is $\tilde{f}(s) = \int_0^\infty f(x)x^{s-1} dx$.  For differential operators with multi-indices $\alpha$, $D_\alpha = \partial_{x_1}^{\alpha_1}...\partial_{x_k}^{\alpha_k}$, $|\alpha| = \alpha_1 + ... + \alpha_k$.  We use the $C^j$ norms on $\bR^n$,
\begin{equation}
 \|f\|_{C^j}  = \sum_{|\alpha|\leq j}   \sup_{x \in \bR^n}\|D^\alpha f(x)\|.
\end{equation}

Our arguments use a smooth partition of unity on the positive reals.  Let $\sigma \geq 0$ be smooth and supported in $\left[\frac{1}{2}, 2\right]$ and satisfy $\sum_{n \in \zed} \sigma(2^n x) \equiv 1$ for $x \in \bR^+$.  

\section{Background}
We prove the following variant of the approximate functional equation in the twisted case in Section \ref{twisted_afe_section}.

\begin{theorem}[Twisted approximate functional equation]\label{twisted_afe_theorem}
 Let $\phi$ be a Hecke-eigen Maass cusp form.  The $\phi$-twisted Shintani $\sL$ function $\sL^{\pm}(s,\phi)$ has the representation at $s = \frac{1}{2}+i\tau$, $\tau \geq 1$,
 \begin{align}
  &\sL^{\pm}\left(\frac{1}{2}+i\tau,\phi\right) \\ \notag &= \sum_{x \in \Gamma \backslash V_\zed \cap V_{\pm}} \frac{V(g_x, |\Disc(x)|)}{|\Stab(x)||\Disc(x)|^{\frac{1}{2}+i\tau}} + \sum_{\xi \in \Gamma \backslash (\hat{V}_\zed \setminus S)} \frac{\hat{V}(g_\xi, |\Disc(\xi)|)}{|\Stab(\xi)||\Disc(\xi)|^{\frac{1}{2}-i\tau}}  + O_A(\tau^{-A}),
 \end{align}
where $g_x, g_\xi$ are in a fundamental domain for $\SL_2(\zed)\backslash \SL_2(\bR)$.  The functions $V$ and $\hat{V}$ satisfy the following properties.

\begin{enumerate}
 \item There is a $c>0$ such that, if $g = n_u a_t k_\theta$ then, for any $\epsilon>0$, \begin{equation}V(g,y), \hat{V}(g,y) \ll \exp(-ct^c) + O_\epsilon\left(y^{-2} \tau^{-\frac{3}{2} + \epsilon}\right).\end{equation}
 \item For any fixed $\eta > 0$, if $h = n_{u'}a_{t'}k_{\theta'}$ with $|u'|, |1-t'|, |\theta'|<\delta < \tau^{-\eta}$ and $t \ll (\log \tau)^{O(1)}$ then, for all $\epsilon >0$ \begin{equation}V(hg,y)-V(g,y), \hat{V}(hg,y)-\hat{V}(g,y) \ll_{\eta, \epsilon} \delta (1 + t)^{O(1)}+y^{-2}\tau^{-\frac{3}{2}+\epsilon}.\end{equation}
 \item   For $a \geq 0$,
 \begin{equation}
  y^a \left(\frac{\partial}{\partial y} \right)^a V(g,y), y^a \left(\frac{\partial}{\partial y}\right)^a \hat{V}(g,y) \ll_{a,A} \left(1 + \frac{y}{1+|\tau|^2}\right)^{-A} + y^{-2}\tau^{-A}.
 \end{equation}
 \end{enumerate}
\end{theorem}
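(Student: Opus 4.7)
The plan is to construct the approximate functional equation directly from Shintani's integral representation of $\sL^{\pm}(s,\phi)$ over the prehomogeneous vector space $(\GL_2,V)$, since no classical functional equation for the twisted zeta function is known. One starts by writing
\begin{equation*}
\sL^{\pm}(s,\phi) = \int_{\SL_2(\zed)\backslash \SL_2(\bR)} \phi(g) \int_0^\infty \Theta^{\pm}(g,t)\, t^{2s}\, \frac{dt}{t}\, dg,
\end{equation*}
where $\Theta^{\pm}(g,t)$ is a theta-like sum over forms in $V_\zed^{\pm}$ weighted against the $\SL_2(\bR)$ action. A smooth partition of unity in $t$ at a threshold $T$, chosen so that the primal and dual sums each live effectively on $|\Disc| \ll \tau^2$, splits the integral into ranges $t>T$ and $t<T$.

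For $t>T$, absolute convergence lets one swap sum and integral to recover the first sum of the theorem, with $V(g,y)$ defined as the Mellin transform of the cutoff factor at the orbit of a cubic form of discriminant $y$. For $t<T$, Poisson summation in the ambient space $V_\bR$ --- the mechanism behind the Shintani functional equation in the untwisted case --- converts $\Theta^{\pm}(g,t)$ to a sum over the dual lattice $\hat{V}_\zed\setminus S$ with oscillatory phases, and after reorganizing and sending $t \to t^{-1}$ this produces the second sum, defining $\hat{V}(g,y)$. The $O_A(\tau^{-A})$ error absorbs the rapid decay of $\phi$ at the cusp (giving the $\exp(-ct^c)$ term of property (1)), the singular contribution along $S$, and the partition boundary terms, each controlled by iterated integration by parts.

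The three properties then follow from the oscillatory/Mellin representations of $V,\hat{V}$. Property (3) is immediate, since $y\partial_y$ is dual to multiplication by $-u$ on the Mellin side, so $a$ iterations gain $u^a$ against the Gamma-like kernel and produce the stated polynomial decay. Property (1) combines cusp decay of $\phi$ with integration by parts in the Mellin variable, each step gaining a factor of $\tau^{-1}$ against the oscillatory $y^u$ factor, until balanced against the residual tail. Property (2) requires differentiating the integrand in the $g$-direction through the Iwasawa coordinates $n_ua_tk_\theta$ and invoking Sobolev bounds on $\phi$ together with smoothness of the $g$-action on the Mellin kernel.

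The main obstacle will be property (2) with polynomial control $(1+t)^{O(1)}$ uniform in the depth of the cusp. Both the Whittaker expansion of $\phi$ and the derivatives of the oscillatory kernels in $g$ produce powers of $t$, and balancing these against $\delta < \tau^{-\eta}$ and the universal tail $y^{-2}\tau^{-3/2+\epsilon}$ will require a careful Sobolev-type estimate leveraging the $\tau^{-A}$ gains from integration by parts in the Mellin variable. The remaining ingredients --- the Shintani--Sato integral representation, Poisson summation on $V$, and Mellin inversion --- are essentially standard, and the technical heart of the proof lies in this uniform translation estimate, which must be strong enough to withstand the perturbation analysis used in the proof of Theorem \ref{twisted_theorem}.
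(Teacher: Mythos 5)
Your high-level plan (integral representation over $(\GL_2,V)$, Poisson summation to pass to the dual lattice, Mellin/cutoff analysis to define $V$ and $\hat V$, then prove the three properties from oscillatory representations) is the correct spirit, but there are concrete gaps where your argument as written would not go through, and the decomposition you chose differs from the paper's in ways that matter.

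First, the integral representation. You write $\sL^{\pm}(s,\phi)=\int_{\Gamma\backslash\SL_2}\phi(g)\int_0^\infty \Theta^{\pm}(g,t)\,t^{2s}\frac{dt}{t}\,dg$ with no test function specified. To make the theta sum converge and to actually identify the $\phi$-integral with $\sL^{\pm}$, the paper builds the orbital zeta function with a \emph{specific} $K$-invariant test function $F_\pm(g\cdot f_\pm)=n_\pm\lambda^{2k}\exp(-\lambda^2-t^2-t^{-2}-(u/t)^2)$ chosen so that the inner $G^1$-integral against $\phi(g^{-1})$ is a bi-$K$-invariant convolution, which is a $\phi$-eigenfunction by strong multiplicity one; the eigenvalue $\sqrt\pi K_\nu(2)$ is computed explicitly. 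Without this eigenfunction observation there is no way to equate the $G^1$-integral with $\sL^{\pm}$ up to a $\Gamma$-factor. This step is not a formality: it is what ties the orbital integral to the actual twisted Dirichlet series, and it fails if you try a generic $\Theta^{\pm}$.

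Second, the singular contribution. You say it is ``absorbed by $O_A(\tau^{-A})$... controlled by iterated integration by parts.'' In fact the treatment requires an explicit fibration of $\hat V_\zed\cap S$ into $\{0\}$, the $(0,0,0,m)$ orbits, and the $(0,0,3m,n)$ orbits; the first two terms vanish exactly because $\phi$ is mean-zero and has no constant term, not because of decay. The remaining term is reduced, after unfolding, to a double $K$-Bessel integral (Lemma~\ref{double_bessel_mellin_eval}) and only then does a contour shift with Stirling give the $O_A(\tau^{-A})$. Integration by parts alone misses both the exact cancellation and the Bessel structure.

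Third, you are using a sharp/smoothed partition of the theta variable at a threshold $T$, whereas the paper instead uses the contour-shift approximate functional equation: integrate $\Lambda^{\pm}(s+w,\phi,F_\pm)G(w)/w$ over $\RE w=\pm 2$ with $G(w)=e^{w^2}$, pick up the pole at $w=0$, and apply Poisson on the left contour. Both routes are legitimate approximate functional equations in principle, but the paper's choice is not cosmetic: it produces $V(g,y)=\phi(g)\,V(y)$ with $V(y)$ a ratio of $\Gamma$-factors (making property (3) for $V$ really trivial), and it produces $\hat V$ as a $w$-contour integral against the oscillatory $E(g_1,w)$, which is precisely the object amenable to the stationary-phase decomposition $E=E_{\SP}+E_{\LL}+E_{\PL}+E_{\EE}$. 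Your ``property (3) is immediate'' and ``property (1) combines cusp decay with integration by parts'' descriptions are accurate only for the $V$-side. On the $\hat V$-side, (1), (2) and (3) all require the full oscillatory-integral machinery: separating a stationary-phase main term, a low-$t$ term, a pole/Plancherel term and an error, with bounds depending on $t_1$, $u_1$, $\lambda$ in a coupled way. You have identified property (2) as the technical heart, but on the dual side the same apparatus is needed already for (1) and (3), and this is where the bulk of the work of the proof lies.
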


By forming linear combinations to prove Theorem \ref{twisted_theorem} it suffices to prove the following estimates.
\begin{proposition}\label{twisted_afe_proposition}
We have the pair of estimates
\begin{align}
\sum_{x \in \Gamma \backslash V_\zed \cap V_{\pm}} \frac{V(g_x, |\Disc(x)|)}{|\Stab(x)||\Disc(x)|^{\frac{1}{2}+i\tau}}&\ll \tau^{\frac{26}{27}+\epsilon} \\\notag \sum_{\xi \in \Gamma \backslash (\hat{V}_\zed \setminus S)} \frac{\hat{V}(g_\xi, |\Disc(\xi)|)}{|\Stab(\xi)||\Disc(\xi)|^{\frac{1}{2}-i\tau}} &\ll \tau^{\frac{26}{27}+\epsilon}.
\end{align}
\end{proposition}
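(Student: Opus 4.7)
The plan is to follow the architecture developed for the untwisted Shintani zeta function in \cite{HL22}, using the twisted approximate functional equation (Theorem \ref{twisted_afe_theorem}) as the main new ingredient. By property (3) of that theorem the contribution of discriminants $|\Disc(x)| \gg \tau^{2+\epsilon}$ is $O_A(\tau^{-A})$, so inserting the smooth dyadic partition $\sum_n \sigma(2^{-n} Y) \equiv 1$ reduces matters to bounding, for each dyadic $Y \leq \tau^{2+\epsilon}$,
\[
S_\pm(Y) = \sum_{\substack{x \in \Gamma\backslash V_\zed \cap V_\pm \\ |\Disc(x)| \asymp Y}} \frac{V(g_x,|\Disc(x)|)}{|\Stab(x)|\, |\Disc(x)|^{\frac12+i\tau}}
\]
by $O(Y^{o(1)}\tau^{26/27+\epsilon})$, together with the analogous estimate for the dual sum. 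I would then unfold the orbital sum onto a Siegel fundamental domain $\sF \subset V_\bR$ for $\SL_2(\zed)$ and introduce the homogeneous coordinates of \cite{HL22}, writing $x = g \cdot f_0(Y^{1/4})$ with $g = n_u a_t k_\theta$ in a fundamental domain for $\SL_2(\zed)\backslash \SL_2(\bR)$ and $f_0$ a scaled canonical form of discriminant $\asymp 1$. After smoothing the indicator of $\sF$ at a scale $\delta = \tau^{-\eta}$, the sum $S_\pm(Y)$ becomes a weighted lattice sum of the form $\sum_{x \in V_\zed} W_Y(x)\, |\Disc(x)|^{-i\tau}$, where the amplitude $W_Y$ bundles the shape weight $V(g_x,|\Disc(x)|)$, the dyadic localization to the discriminant shell, and the smoothed fundamental-domain cutoff.

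Next I would apply Poisson summation in the Cartesian coordinates $(a,b,c,d)$ to produce a dual sum over $\xi \in \hat V_\zed$ of oscillatory integrals
\[
\int_{V_\bR} W_Y(x)\, e\!\left(-\tfrac{\tau}{2\pi}\log|\Disc(x)| - \langle \xi,x\rangle\right) dx.
\]
Stationary-phase analysis against the Hessian of $\log|\Disc|$ on the discriminant shell identifies the critical points of each dual integral, and van der Corput's B-process delivers cancellation in $\tau$ and decay in $|\xi|$, reproducing the mechanism of \cite{HL22}. The Maass-twisted novelty is absorbed by freezing $g$ at the center of each shape-patch of diameter $\delta$ in $\SL_2(\zed)\backslash\SL_2(\bR)$: property (2) of Theorem \ref{twisted_afe_theorem} bounds the replacement error by $\delta(1+t)^{O(1)} + Y^{-2}\tau^{-3/2+\epsilon}$, while on each patch the residual phase-only exponential sum is precisely the one treated in the untwisted paper. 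Summing the patch estimates against the volume of $\SL_2(\zed)\backslash\SL_2(\bR)$ and optimizing $\eta$ against the van der Corput saving produces the target exponent $\tfrac{26}{27}$.

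The main obstacle is carrying the shape-patch bookkeeping uniformly into the cuspidal range of $\SL_2(\zed)\backslash\SL_2(\bR)$, where $t$ can grow like $(\log \tau)^{O(1)}$. The polynomial loss $(1+t)^{O(1)}$ in property (2) must be absorbed by the super-polynomial decay $\exp(-ct^c)$ in property (1), and the van der Corput exponent pair has to be selected compatibly with both the shape-scale $\delta$ and the dyadic parameter $Y$, so that the joint optimization yields $\tfrac{26}{27}$ uniformly for $Y \leq \tau^{2+\epsilon}$. Once this optimization is performed, the dual estimate over $\xi \in \Gamma\backslash (\hat V_\zed \setminus S)$ follows by the same argument, using that $\hat V$ satisfies the same three properties and that the dual lattice admits the analogous homogeneous-coordinate parameterization away from the singular set $S$.
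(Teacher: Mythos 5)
Your proposal describes a genuinely different route from the paper's proof, and the difference matters. You propose to take each side of the approximate functional equation, unfold onto a Siegel domain, apply Poisson summation in the Cartesian coordinates $(a,b,c,d)$, and then run stationary phase / van der Corput's B-process on the dual oscillatory integrals. But Poisson summation is already baked into the approximate functional equation — it is how the $\hat V$-sum over $\hat V_\zed$ arises in the first place — so a second Poisson application layered on top of the AFE would largely undo that step without a clear gain. The paper instead attacks each of the two sums \emph{directly}, with no further Poisson summation. After a dyadic localization in discriminant and dropping forms with large stabilizer, it replaces the lattice sum by its average over shifted $\ell^\infty$ balls $B_R$, uses property (2) of Theorem \ref{twisted_afe_theorem} together with the Jacobian estimates in Lemma \ref{Jacobian_lemma} to show that the weight $\hat V(g_\xi,|\Disc(\xi)|)$ (and the smooth cutoff $F$) varies negligibly over such a ball when $t\ll(\log\tau)^{O(1)}$, and then applies Lemma \ref{exponential_sum_lemma} — the already-proved HL22 exponential-sum bound $\E_{y\in B_R}[|\Disc(\xi+y)|^{i\tau}]\ll\tau^{-1/27+\epsilon}$ with $R=Y^{1/4}/\tau^{7/27-\epsilon}$ — to produce the saving, while property (1) disposes of the cuspidal range $t\gg(\log\tau)^{O(1)}$. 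Summing over $\asymp Y$ lattice points of discriminant $\asymp Y\le\tau^{2+\epsilon}$, each contributing $Y^{-1/2}\cdot\tau^{-1/27+\epsilon}$, gives $\tau^{26/27+\epsilon}$.

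The concrete gap in your proposal is that the quantitative step is missing. You assert that ``optimizing $\eta$ against the van der Corput saving produces the target exponent $\tfrac{26}{27}$,'' but you do not say what saving the B-process actually yields for the four-variable phase $\tau\log|\Disc|$ restricted to a fundamental domain, nor how the amplitude $V(g_x,|\Disc(x)|)$ — which is \emph{not} smooth in $(a,b,c,d)$, since $g_x$ sits in a fundamental domain with boundary, and $V$ depends on the Maass form $\phi$ — is to be controlled after Fourier transforming it in the Poisson step. Smoothing the fundamental domain indicator at scale $\delta=\tau^{-\eta}$ does not obviously make $V(g_x,\cdot)$ smooth enough for the stationary-phase asymptotics you invoke, and you do not address the Hecke/Fourier coefficients of $\phi$ that would then appear in the dual sum. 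The paper sidesteps both problems: no second Poisson step is needed, the weight is \emph{frozen} locally via property (2) rather than Fourier-transformed, and the exponent $1/27$ comes directly from a lemma already proved in \cite{HL22}. If you want to pursue the B-process route instead, you would need to re-prove a clean stationary-phase asymptotic in all four variables with the Maass-dependent amplitude, which is substantially more work than the local-averaging path and not what the paper does.
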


\subsection{Background regarding $\GL_2(\bR)$}

We use the conventions of \cite{BST13} regarding Lie groups.  Let $\Gamma = \SL_2(\zed)$, and
\begin{align}
G^+ &= \{g \in \GL_2(\bR): \det(g)>0\}\\ \notag K &= \SO_2(\bR), \qquad k_\theta = \begin{pmatrix} c(\theta) & s(\theta)\\ -s(\theta) & c(\theta) \end{pmatrix}\\ \notag
 A_+ &= \left\{ a_t: t \in \bR_+\right\}, \qquad a_t = \begin{pmatrix} \frac{1}{t} &0\\ 0 & t \end{pmatrix}\\ \notag
 N &= \left\{n_u: u \in \bR\right\}, \qquad n_u = \begin{pmatrix} 1 &0\\u&1 \end{pmatrix}\\ \notag
 \Lambda &= \left\{d_\lambda: \lambda \in \bR_+\right\}, \qquad d_\lambda = \begin{pmatrix} \lambda &0\\0&\lambda\end{pmatrix}.
\end{align}

The Iwasawa decomposition of $\SL_2(\bR)$ expresses $g = n_u a_t k_\theta$.    Then for $g \in G^+$,$g = n_u a_t k_\theta d_\lambda$ and Haar measure on $G^+$ is given by $dg = du \frac{dt}{t^3} d\theta \frac{d\lambda}{\lambda}$.  There is an involution $\iota:G^+ \to G^+$, 
\begin{equation}
 g^\iota = \begin{pmatrix} 0&1\\-1&0\end{pmatrix} (g^{-1})^t \begin{pmatrix}0&-1\\1&0\end{pmatrix}.
\end{equation}  In coordinates, $g^\iota = n_u a_t k_\theta d_{\frac{1}{\lambda}}$. 
Let $\sF$ denote the standard fundamental domain for $\SL_2(\zed)\backslash \SL_2(\bR)$,
\begin{align}
 \sF &= \{n_ua_tk_\theta: n_u \in N'(a), a_t\in A', k_\theta \in K\}\\ \notag
 A' &= \left\{ \begin{pmatrix} \frac{1}{t} &0\\0 & t \end{pmatrix}: t \geq \frac{3^{\frac{1}{4}}}{\sqrt{2}}\right\}\\ \notag
 N'(a) &= \left\{ \begin{pmatrix} 1 &0\\u & 1\end{pmatrix}: u\in \nu(a)\right\}
\end{align}
where $\nu(a)$ is the union of two subintervals of $\left[-\frac{1}{2}, \frac{1}{2}\right]$ and is the whole interval if $a \geq 1$.  For constants $A, B>0$, the \emph{Siegel set} $\sS(A,B)$ is
\begin{equation}
 \sS(A,B) = \left\{n_ua_tk_\theta: |u| \leq A, t \geq B, k_\theta \in K\right\}.
\end{equation}

We assume that $F$ is a smooth function, right $K$ invariant, supported on a Siegel set $\sS(A,B)$ with bounded derivatives and such that $\sum_{\gamma \in \Gamma} F(\gamma g) = 1$.  Extend $F$ to $G^+$ by $F(d_\lambda g) = F(g)$ for all $\lambda \in \bR^+$.

\subsection{Automorphic forms}
We consider Maass form $\phi$ which is a smooth function on $\Gamma \backslash \SL_2(\bR)/\SO_2(\bR)$ that is an eigenfunction of the Casimir operator (hyperbolic Laplacian) and Hecke algebra.  The Maass form $\phi$ has a Fourier expansion
\begin{equation}
 \phi\left(\begin{pmatrix} 1&u\\0&1\end{pmatrix} \begin{pmatrix} t &0\\0 & \frac{1}{t}\end{pmatrix} \right) = 2t \sum_{n=1}^\infty \rho_\phi(n) K_{\nu}(2\pi nt^2) \cos(2\pi nu).
\end{equation}
This satisfies $\phi(g) = \phi((g^{-1})^t) = \phi(g^\iota)$.    The Fourier coefficients satisfy the Hecke relations 
\begin{equation}
 \rho_\phi(m)\rho_\phi(n) = \sum_{d|(m,n)} \rho_\phi\left(\frac{mn}{d^2}\right).
\end{equation}
The bound $|\rho_\phi(n)| \ll n^{\frac{7}{64}+\epsilon}$ is proved in \cite{K03}, while $\sum_{n \leq X} |\rho_\phi(n)|^2 \ll X$ follows from Rankin-Selberg Theory.

The $K$-Bessel function satisfies exponential decay in large argument, $K_\nu(z) \ll e^{-z}$ as $z \to \infty$, so that $\phi$ decays rapidly in large $t$. Several integral identities involving the $K$-Bessel function are used,
\begin{align}
 K_\nu(z) &= \frac{1}{2}\left(\frac{z}{2}\right)^\nu \int_0^\infty \exp\left( -t -\frac{z^2}{4t}\right) \frac{dt}{t^{\nu+1}}\\
 \notag K_\nu(xz) &= \frac{\Gamma(\nu+1)(2z)^\nu}{\pi^{\frac{1}{2}} x^\nu} \int_0^\infty \frac{\cos(xt)}{(t^2 + z^2)^{\nu +\frac{1}{2}}} dt\\
 \notag K_\nu(z)K_\nu(\zeta)&= \frac{1}{2} \int_0^\infty \exp\left(-\frac{t}{2} - \frac{z^2+\zeta^2}{2t}\right) K_\nu\left(\frac{z\zeta}{t}\right) \frac{dt}{t}.
\end{align}

The Mellin transform of a pair of $K$ Bessel functions has the following evaluation.
\begin{lemma}\label{double_bessel_mellin_eval}
 We have in $\RE(\alpha + \beta)>0$ and $\RE(s) > |\RE(\mu)| + |\RE(\nu)|$,
 \begin{align}
 \int_0^\infty K_\mu(\alpha x)K_\nu(\beta x) x^{s-1}dx &= 2^{s-3}\alpha^{-s-\nu}\beta^\nu \Gamma\left(\frac{1}{2}(s+\mu+\nu) \right)\Gamma\left(\frac{1}{2}(s-\mu-\nu)\right)\\\notag&\int_0^1 t^{\frac{s-\mu+\nu}{2}-1}(1-t)^{\frac{s+\mu-\nu}{2}-1}\left(1 - \left(1-\frac{\beta^2}{\alpha^2}\right)t\right)^{-\frac{s+\mu+\nu}{2}}dt.
\end{align}
\end{lemma}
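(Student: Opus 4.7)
The plan is to reduce the left-hand side to a triple Mellin-type integral using the first of the three $K$-Bessel integral identities recorded above, perform the $x$-integral as a Gaussian moment, and then unfold the remaining double integral into the Euler integral on the right.

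First, I would apply
\[
K_\mu(\alpha x) = \frac{1}{2}\left(\frac{\alpha x}{2}\right)^\mu\int_0^\infty \exp\!\left(-u - \frac{\alpha^2 x^2}{4u}\right)\frac{du}{u^{\mu+1}},
\]
and the analogous identity for $K_\nu(\beta x)$, producing a triple integral in $(u,v,x)$ whose $x$-dependence is $\exp(-\lambda x^2)\,x^{\mu+\nu+s-1}$ with $\lambda = (\alpha^2 v + \beta^2 u)/(4uv)$. Under the hypothesis $\RE(s)>|\RE(\mu)|+|\RE(\nu)|$ together with $\RE(\alpha),\RE(\beta)>0$, Fubini applies, and the $x$-integration yields $\frac{1}{2}\Gamma\!\left((s+\mu+\nu)/2\right)\lambda^{-(s+\mu+\nu)/2}$. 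Collecting powers of $2$ (two halves from the Bessel representations, one half from the Gaussian moment, a $2^{-\mu-\nu}$ from $(\alpha/2)^\mu(\beta/2)^\nu$, and a $2^{\mu+\nu+s}$ from $\lambda^{-(\mu+\nu+s)/2}$) gives exactly the $2^{s-3}$ appearing in the target.

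Second, I would reduce the remaining $(u,v)$ integral by the polar-type substitution $u = w t$, $v = w(1-t)$ with $w \in (0,\infty)$, $t \in (0,1)$ and Jacobian $w$. The factor $e^{-u-v}$ becomes $e^{-w}$, the $w$-dependence separates, and the $w$-integral is a Gamma integral producing $\Gamma\!\left((s-\mu-\nu)/2\right)$, convergent precisely because of the hypothesis on $\RE(s)$. The remaining $t$-integrand carries the Beta weights $t^{(s-\mu+\nu)/2-1}(1-t)^{(s+\mu-\nu)/2-1}$ together with the denominator $(\alpha^2(1-t)+\beta^2 t)^{(s+\mu+\nu)/2}$. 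Factoring $\alpha^{s+\mu+\nu}$ out of this denominator produces the target form $(1-(1-\beta^2/\alpha^2)t)^{-(s+\mu+\nu)/2}$, and combining $\alpha^{-(s+\mu+\nu)}\alpha^\mu\beta^\nu = \alpha^{-s-\nu}\beta^\nu$ gives the advertised prefactor.

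The one genuine delicacy is justifying the interchange of orders of integration in the triple integral, but absolute convergence in the stated strip is enough, and once the identity holds there it extends by analytic continuation to the full region where the Euler integral on the right is meaningful. Every other step is routine algebraic bookkeeping of Gamma factors and powers; the main thing to watch is aligning the exponents of $t$, $1-t$, and $\alpha$ consistently so that the Beta parameters and the power of $\alpha$ in the final prefactor match the formula as stated.
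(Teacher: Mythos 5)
Your proof is correct, and it is a genuinely different route from the paper's. The paper simply cites the tabulated Mellin transform of a product of two $K$-Bessel functions (Bateman vol.~1, p.~334, eqn.~(47)), which gives the answer as a product of four Gamma factors times a hypergeometric ${}_2F_1\bigl(\tfrac{s+\mu+\nu}{2},\tfrac{s-\mu+\nu}{2},s;1-\tfrac{\beta^2}{\alpha^2}\bigr)$, and then inserts the Euler integral representation of ${}_2F_1$, after which three of the four Gammas cancel to produce the stated formula. You instead derive the identity directly: insert the paper's first $K$-Bessel integral representation for each factor, perform the resulting Gaussian $x$-moment to produce $\Gamma\bigl(\tfrac{s+\mu+\nu}{2}\bigr)$, and then unfold the remaining $(u,v)$-integral with the simplex substitution $u=wt$, $v=w(1-t)$, so that the $w$-integral yields $\Gamma\bigl(\tfrac{s-\mu-\nu}{2}\bigr)$ and the $t$-integral is already the Euler integral on the right; your bookkeeping of powers of $2$ and $\alpha,\beta$ is correct. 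The paper's approach is shorter because it offloads the analytic work to a table entry, while yours is self-contained, never mentions ${}_2F_1$, and makes transparent why precisely those two Gamma factors appear. The one mild caveat, which you anticipate: the $K$-Bessel integral representation you invoke requires $|\arg\alpha|,|\arg\beta|<\pi/4$, which is more restrictive than the lemma's hypothesis $\RE(\alpha+\beta)>0$; since both sides are analytic in $(\alpha,\beta)$ on the larger region, the identity then extends by analytic continuation exactly as you indicate.
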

\begin{proof}
 The claimed Mellin transform can be evaluated in terms of Gauss's Hypergeometric function,
\begin{equation}
 {}_2F_1(a,b,c;z)= \frac{\Gamma(c)}{\Gamma(b)\Gamma(c-b)}\int_0^1 t^{b-1}(1-t)^{c-b-1}(1-tz)^{-a} dt, \qquad |z|< 1.
\end{equation}
We have in $\RE(\alpha + \beta)>0$ and $\RE(s) > |\RE(\mu)| + |\RE(\nu)|$, (\cite{B54} p.334 eqn (47))
\begin{align}
 \int_0^\infty &K_{\mu}(\alpha x) K_\nu(\beta x) x^{s-1} dx = \frac{2^{s-3}\alpha^{-s-\nu}\beta^\nu}{\Gamma(s)} \Gamma\left(\frac{1}{2}(s+\mu+\nu) \right)\Gamma\left(\frac{1}{2}(s-\mu+\nu) \right) \\&\times \notag \Gamma\left(\frac{1}{2}(s+\mu-\nu) \right) \Gamma\left(\frac{1}{2}(s-\mu-\nu) \right) {}_2F_1\left(\frac{s+\mu+\nu}{2},\frac{s-\mu+\nu}{2},s;1 - \frac{\beta^2}{\alpha^2}\right).
\end{align}
Combining the evaluations proves the lemma. 
\end{proof}

\begin{lemma}\label{double_bessel_mellin_lemma}
 If $\alpha>\beta > 0$ and for some $\epsilon>0$, $\RE(s) > |\RE(\mu)| + |\RE(\nu)| +\epsilon$, then
 \begin{align}
  \int_0^\infty K_\mu(\alpha x)K_\nu(\beta x)x^{s-1}dx &\ll_\epsilon 2^{\RE(s)-3} \alpha^{\RE(\mu)}\beta^{-\RE(s+\mu)}\\ \notag &\times\left|\Gamma\left(\frac{s+\mu+\nu}{2}\right)\Gamma\left(\frac{s-\mu-\nu}{2}\right)\right|.
 \end{align}

\end{lemma}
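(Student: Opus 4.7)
The plan is to derive the stated bound directly from the exact evaluation given in Lemma \ref{double_bessel_mellin_eval}. Writing $z = 1 - \beta^2/\alpha^2 \in (0,1)$, that lemma expresses the Mellin transform as
\begin{equation}
2^{s-3}\alpha^{-s-\nu}\beta^\nu\,\Gamma\!\left(\tfrac{s+\mu+\nu}{2}\right)\Gamma\!\left(\tfrac{s-\mu-\nu}{2}\right) \cdot I,
\end{equation}
where $I = \int_0^1 t^{(s-\mu+\nu)/2-1}(1-t)^{(s+\mu-\nu)/2-1}(1-zt)^{-(s+\mu+\nu)/2}\,dt$. Taking absolute values of the prefactor produces exactly $2^{\RE(s)-3}\alpha^{-\RE(s+\nu)}\beta^{\RE(\nu)}$ multiplied by the two gamma factors appearing in the target bound. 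What must be shown is that $|I| \ll_\epsilon (\alpha/\beta)^{\RE(s+\mu+\nu)}$, since then the powers of $\alpha$ and $\beta$ collapse as $\alpha^{-\RE(s+\nu)}\beta^{\RE(\nu)} \cdot (\alpha/\beta)^{\RE(s+\mu+\nu)} = \alpha^{\RE(\mu)}\beta^{-\RE(s+\mu)}$.

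To bound $|I|$ I would move the modulus inside the integral, replacing each complex exponent by its real part, and then exploit the monotonicity of $(1-zt)^{-\RE(s+\mu+\nu)/2}$ on $[0,1]$. The hypothesis $\RE(s) > |\RE(\mu)| + |\RE(\nu)| + \epsilon$ forces $\RE(s+\mu+\nu) > 0$, so this factor is increasing in $t$ and maximized at $t = 1$, where $1 - z = \beta^2/\alpha^2$. Pulling $(\alpha/\beta)^{\RE(s+\mu+\nu)}$ outside leaves exactly the Euler Beta integral $B\!\left(\tfrac{\RE(s-\mu+\nu)}{2},\,\tfrac{\RE(s+\mu-\nu)}{2}\right)$, and the hypothesis ensures both arguments exceed $\epsilon/2$.

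The remaining task is to verify that this Beta function is bounded uniformly by a constant depending only on $\epsilon$. I expect this to be the only step with any subtlety, though it is essentially routine: for bounded arguments the Beta function is continuous and hence bounded on $[\epsilon/2, M]^2$, while for one or both arguments large, Stirling yields
\begin{equation}
\log B(a,b) \sim -a\log\tfrac{a+b}{a} - b\log\tfrac{a+b}{b} + O(1) \leq O_\epsilon(1),
\end{equation}
so $B(a,b)$ is in fact exponentially decaying in $a+b$ once both are bounded away from $0$. Combining this uniform bound with the pointwise estimate on $I$ and the prefactor from Lemma \ref{double_bessel_mellin_eval} then yields the claim. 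The identification of $1-zt$ with $\beta^2/\alpha^2$ at the endpoint $t=1$ is the mechanism that produces the asymmetric dependence on $\alpha$ and $\beta$ in the target bound, which is ultimately a reflection of the fact that $K_\nu(\beta x)$ decays more slowly than $K_\mu(\alpha x)$ in this regime.
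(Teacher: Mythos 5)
Your proposal is correct and follows essentially the same route as the paper: evaluate via Lemma \ref{double_bessel_mellin_eval}, bound the factor $(1-zt)^{-(s+\mu+\nu)/2}$ by its modulus at $t=1$ (which gives $(\alpha/\beta)^{\RE(s+\mu+\nu)}$), and bound the remaining Beta integral by a constant depending only on $\epsilon$. The paper simply asserts the last step without justification; your verification of it (which could also be done by noting that $B(a,b)$ is decreasing in each positive argument, so $B(a,b)\le B(\epsilon/2,\epsilon/2)$) is a correct and welcome addition.
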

\begin{proof}
 Bound 
 \begin{equation}
  \left(1 - \left(1 - \frac{\beta^2}{\alpha^2}\right)t\right)^{-\frac{s+\mu+\nu}{2}} \leq \frac{\alpha^{\RE(s+\mu+\nu)}}{\beta^{\RE(s+\mu+\nu)}},
 \end{equation}
 which is the value at $t=1$.
Bound the rest of the integral in $t$ by a constant depending on $\epsilon$.  This gives the claim.
\end{proof}

\begin{lemma}
 The automorphic form is an eigenfunction of the convolution equation
 \begin{equation}
  \int_{G^1} \exp(-\tr g^t g) \phi(hg^{-1}) dg = C\phi(h)
 \end{equation}
with eigenvalue $C = \sqrt{\pi}K_{\nu}(2).$

\end{lemma}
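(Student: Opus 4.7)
The plan is direct computation: substitute the Iwasawa decomposition for $g$, expand $\phi(hg)$ via its Whittaker-Fourier series, and close the resulting double integral via the third $K$-Bessel identity from the preliminaries. The kernel $F(g):=\exp(-\tr g^t g)$ is bi-$K$-invariant, since $(k_1gk_2)^t(k_1gk_2)=k_2^t g^t g k_2$ combined with cyclicity of trace gives $F(k_1gk_2)=F(g)$, and in particular $F(g^{-1})=F(g)$. After substituting $g\mapsto g^{-1}$ in the convolution (Haar-preserving on the unimodular $G^1$), the integral reads $\int_{G^1} F(g)\,\phi(hg)\,dg$.

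Take $h=n_{u_0}a_{t_0}$ and write $g=n_u a_t k_\theta$ in Iwasawa. One computes $\tr g^t g=t^2+(1+u^2)/t^2$, the $\theta$-integral trivializes by the right-$K$-invariance of $\phi$ (contributing only a constant from the Haar normalization on $K$), and the commutation $a_{t_0}n_u=n_{u t_0^2}a_{t_0}$ yields $hg=n_{u_0+u t_0^2}\,a_{t_0 t}\,k_\theta$. Inserting the Fourier expansion of $\phi$ and splitting $\cos(2\pi n(u_0+ut_0^2))$ by the addition formula (the sine piece vanishing by parity in $u$), the $u$-integral is a Gaussian--cosine that evaluates to $t\sqrt{\pi}\,\cos(2\pi n u_0)\exp(-\pi^2 n^2 t_0^4 t^2)$.

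Setting $s=t^2$ and $\beta^2:=1+\pi^2 n^2 t_0^4$, the remaining integral
\begin{equation*}
\tfrac{1}{2}\int_0^\infty \exp(-1/s - \beta^2 s)\, K_\nu(2\pi n t_0^2 s)\,\frac{ds}{s}
\end{equation*}
matches, under $s=2/t$, the product formula $K_\nu(z)K_\nu(\zeta)=\tfrac{1}{2}\int_0^\infty e^{-t/2-(z^2+\zeta^2)/(2t)}K_\nu(z\zeta/t)\,dt/t$ from the preliminaries precisely when $z^2+\zeta^2=4\beta^2$ and $z\zeta=4\pi n t_0^2$. These constraints are solved cleanly by $(z,\zeta)=(2,\,2\pi n t_0^2)$, yielding the value $K_\nu(2)\,K_\nu(2\pi n t_0^2)$. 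Summing over $n$ reassembles $\phi(h)$ with eigenvalue $C=\sqrt{\pi}\,K_\nu(2)$, as claimed.

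The main obstacle is the Bessel identification: the emergence of the clean constant $K_\nu(2)$ depends on spotting that $\beta^2$ factors as $1+\pi^2 n^2 t_0^4=((2)^2+(2\pi n t_0^2)^2)/4$, so that the $n,t_0$-dependent parameter $\zeta=2\pi n t_0^2$ pairs with the universal $z=2$. This universality of $z$ is precisely what makes $C$ a genuine (spectral-parameter-only) eigenvalue rather than an integral kernel in $(n,t_0)$.
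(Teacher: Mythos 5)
Your proposal is correct and ultimately rests on the same two ingredients as the paper's proof: the Fourier--Whittaker expansion and the $K$-Bessel product identity $K_\nu(z)K_\nu(\zeta)=\tfrac12\int_0^\infty e^{-t/2-(z^2+\zeta^2)/(2t)}K_\nu(z\zeta/t)\,dt/t$ (your substitution $t=2/s$ correctly converts it to the form you need, and $(z,\zeta)=(2,\,2\pi n t_0^2)$ is the right identification).

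The one genuine difference is structural. The paper first invokes strong multiplicity one (the convolution commutes with the Casimir and Hecke operators and is right $K$-invariant, so it is a scalar multiple of $\phi$) and then pins down the scalar by specializing to $h=\id$, dividing by $\phi(\id)=2\sum_n\rho_\phi(n)K_\nu(2\pi n)$ at the end. You instead carry the full parameter $h=n_{u_0}a_{t_0}$ through the computation — using $a_{t_0}n_u=n_{ut_0^2}a_{t_0}$, killing the odd (sine) piece by parity of the Gaussian weight, and watching the universal $z=2$ factor out in the Bessel identity — which reassembles $\phi(h)$ directly. This is a bit more work in the Gaussian and Bessel steps, but it buys you two things: it does not presume $\phi(\id)\neq 0$, and it proves the eigenfunction property constructively without appealing to strong multiplicity one. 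Both routes are valid; yours is self-contained at the cost of slightly heavier bookkeeping.
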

\begin{proof}
The convolution is right $K$ invariant by definition, since $\exp(-\tr g^t g)$ is, and is an eigenfunction of the Casimir operators and Hecke algebra, with the same eigenvalues as $\phi$, by passing the operators under the integral and applying them to $\phi$.  By strong multiplicity 1, the convolution is a multiple of $\phi$.

 To evaluate the eigenvalue, we calculate
\begin{equation}
 \frac{1}{\phi(\id)} \int_{G^1} \phi(g^{-1}) \exp(-\tr g^t g) dg.
\end{equation}
Use $\phi(g^{-1}) = \phi(g^t)$, and integrate away the compact subgroup $K$ under which the  integrand is invariant.  The integral becomes
\begin{align}
 &\frac{1}{\phi(\id)}\int_0^\infty \int_{-\infty}^\infty \exp\left(-t^2 - \frac{1}{t^2} - \left(\frac{u}{t}\right)^2 \right) 2t \sum_{n=1}^\infty \rho_{\phi}(n)K_\nu(2\pi nt^2) \cos(2\pi nu) du \frac{dt}{t^3} \\ \notag
 &= \frac{2 \sum_{n=1}^\infty \rho_\phi(n)}{\phi(\id)}\int_0^\infty \int_{-\infty}^\infty \exp\left(-t^2 -\frac{1}{t^2} -u^2\right) K_\nu(2\pi nt^2) \cos(2\pi nut) du \frac{dt}{t}.
\end{align} 
 Integrating the Gaussian in $u$ obtains
 \begin{align}
 & \sqrt{\pi} \frac{2 \sum_{n=1}^\infty \rho_\phi(n)}{\phi(\id)} \int_0^\infty \exp\left(-t^2(1 + \pi^2 n^2) - \frac{1}{t^2}\right)K_\nu(2\pi nt^2) \frac{dt}{t}\\ \notag
 &= \sqrt{\pi} \frac{2 \sum_{n=1}^\infty \rho_\phi(n)}{\phi(\id)} \frac{1}{2}\int_0^\infty \exp\left(-\frac{t}{2} - \frac{4 + 4\pi^2 n^2}{2t}\right) K_\nu \left(\frac{4\pi n}{t}\right) \frac{dt}{t}\\ \notag
 &= \sqrt{\pi} K_{\nu}(2) \frac{2\sum_{n=1}^\infty \rho_\phi(n)K_{\nu}(2\pi n)}{\phi(\id)} = \sqrt{\pi} K_{\nu}(2).
\end{align}

\end{proof}

\subsection{The prehomogeneous vector space of binary cubic forms}
The space of real binary cubic forms is 
\begin{equation}
 V_{\bR} = \{f(x,y) = ax^3 + bx^2y + cxy^2 + dy^3: a,b,c,d \in \bR\}
\end{equation}
with integral forms $V_\zed$ having $a,b,c,d \in \zed$.  The discriminant is 
\begin{equation}
 \Disc(f) = b^2c^2 - 4ac^3 -4b^3d-27a^2d^2 + 18abcd.
\end{equation}
There is a bilinear pairing on $V_{\bR}$ which identifies it with its dual space,
\begin{equation}
 \langle f, g\rangle = f_1 g_4 - \frac{1}{3} f_2 g_3 + \frac{1}{3} f_3g_2 -f_4g_1.
\end{equation}
The Fourier transform on $V_\bR$ is given by $\hat{F}(\xi) = \int_{V_{\bR}} F(x) e^{-2\pi i \langle x, \xi\rangle} dx$.

The space has a left $\GL_2(\bR)$ action,
\begin{equation}
 \gamma \cdot f(x,y) = \frac{f((x,y)\gamma)}{|\det \gamma|},
\end{equation}
and $\Disc(\gamma \cdot f) = \det(\gamma)^2 \Disc(f)$.
Under this action there are two open orbits $V_{\pm} = \{f: \pm \Disc(f) > 0\}$ and a singular set $S = \{f: \Disc(f) = 0\}$.
The spaces have base points $f_{\pm}$,
\begin{equation}
 f_+ = \frac{1}{(108)^{\frac{1}{4}}} \left(0, 3, 0, -1\right), \qquad f_- = \frac{1}{\sqrt{2}}\left(0,1,0,1\right).
\end{equation}
Both $V_{\pm}$ can be identified as homogeneous spaces for $G^+$.  The mappings 
\begin{align}
 V_+ &= \left\{n_u a_t k_\theta d_\lambda \cdot f_+: u \in \bR, t\in \bR^+, \theta \in \left[0, \frac{1}{3}\right), \lambda \in \bR^+\right\},\\ \notag
 V_- &= \left\{n_u a_t k_\theta d_\lambda \cdot f_-: u \in \bR, t\in \bR^+, \theta \in \left[0, 1\right), \lambda \in \bR^+\right\}
\end{align}
are bijections between $V_{\pm}$ and subsets of $G^+$.  The stabilizer of $f_-$ is trivial and the stabilizer of $f_+$ is the rotation group generated by rotation by $\frac{2\pi}{3}$. 
The bilinear pairing satisfies $\langle x, y \rangle = \langle g\cdot x, g^\iota \cdot y\rangle$.

The rotation $k_\theta$ maps
\begin{align}
 k_\theta \cdot f_- &= \frac{1}{\sqrt{2}}\left(s(\theta), c(\theta), s(\theta), c(\theta) \right)\\ \notag
 k_\theta \cdot f_+ &= \frac{1}{(108)^{\frac{1}{4}}}\left(s(3\theta), 3c(3\theta), -3 s(3\theta),  -c(3\theta)\right).
 \end{align}
Meanwhile $a_t \cdot (a,b,c,d) = \left(\frac{a}{t^3}, \frac{b}{t}, tc, t^3d\right)$ and $n_u\cdot  (a,b,c,d) = (a,  3au+b,  3au^2+2bu + c, au^3 + bu^2 + cu + d)$, $d_\lambda \cdot (a,b,c,d) = (\lambda a, \lambda b, \lambda c, \lambda d)$. 

Putting these formulas together gives the change of coordinates to homogeneous coordinates.
\begin{align}
 n_u a_t k_\theta d_\lambda \cdot f_- &= \frac{\lambda}{\sqrt{2}} \Bigl(t^{-3} s(\theta), 3t^{-3}s(\theta)u + t^{-1}c(\theta), 3t^{-3}s(\theta)u^2 + 2t^{-1}c(\theta)u+ts(\theta), \\&\notag \qquad t^{-3}s(\theta)u^3 + t^{-1}c(\theta)u^2 + ts(\theta)u + t^3 c(\theta) \Bigr)\\
 \notag n_u a_t k_\theta d_\lambda \cdot f_+&= \frac{\lambda}{(108)^{\frac{1}{4}}}\Bigl(t^{-3} s(3\theta), 3t^{-3}s(\theta)u + 3t^{-1}c(3\theta), 3t^{-3}s(\theta)u^2 + 6t^{-1}c(3\theta)u-3ts(3\theta), \\&\notag \qquad t^{-3}s(3\theta)u^3 + 3t^{-1}c(3\theta)u^2 - 3ts(3\theta)u - t^3 c(3\theta) \Bigr).
\end{align}

Given $f \in V_+$, let $n_u a_t k_\theta d_\lambda \cdot f_+ = f$, $u \in \bR, t \in \bR^+, \theta \in \left[0, \frac{1}{3}\right), \lambda \in \bR^+$ and set $g_f = n_u a_t k_\theta d_\lambda$.  Given $f \in V_-$, let $n_u a_t k_\theta d_\lambda \cdot f_- = f$ with $u \in \bR, t \in \bR^+, \theta \in [0, 1), \lambda \in \bR^+$ and set $g_f = n_u a_t k_\theta d_\lambda$.  In these expressions let $u = u(f), t= t(f), \theta = \theta(f)$.  

We include several lemmas from \cite{HL22} needed in the proof.
\begin{lemma}
 Suppose that $f = n_u a_t \cdot f_{\pm}$ then $\log t = \log t_f + O(1)$.
\end{lemma}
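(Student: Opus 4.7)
The plan is to compare the given representation $f = n_u a_t \cdot f_\pm$ against the canonical representation $g_f = n_{u'} a_{t'} k_{\theta'} d_{\lambda'}$ (with $\theta' \in [0,\tfrac{1}{3})$ for $f_+$ or $\theta' \in [0,1)$ for $f_-$, and $\lambda' \in \bR^+$) and to exploit the uniqueness of the Iwasawa decomposition in $G^+$.

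First I would pin down the dilation component. Since $\det(n_u a_t) = 1$, the hypothesis combined with the identity $\Disc(\gamma \cdot f) = \det(\gamma)^2 \Disc(f)$ forces $\Disc(f) = \Disc(f_\pm)$. On the other hand, $d_{\lambda'}$ acts on cubic forms by the scalar $\lambda'$, so $\Disc(f) = \lambda'^4 \Disc(f_\pm)$. Comparing the two expressions yields $\lambda' = 1$.

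Next I would match the remaining components modulo the stabilizer. With $\lambda' = 1$, the equality $n_u a_t \cdot f_\pm = n_{u'} a_{t'} k_{\theta'} \cdot f_\pm$ implies $(n_{u'} a_{t'} k_{\theta'})^{-1} n_u a_t \in \Stab(f_\pm)$. The stabilizer of $f_-$ is trivial, while the stabilizer of $f_+$ is the cyclic rotation group of order three generated by $k_{1/3}$, as can be read off directly from the explicit formula for $k_\theta \cdot f_+$ displayed earlier. In either case this ratio is of the form $k_{-k/3}$ for some $k \in \{0,1,2\}$, which rearranges to $n_u a_t = n_{u'} a_{t'} k_{\theta' - k/3}$. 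Applying uniqueness of the Iwasawa decomposition in $\SL_2(\bR)$ to this identity gives $u = u'$ and $t = t'$, hence $\log t = \log t_f$.

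The only real subtlety is the stabilizer computation, which is a direct check from the displayed rotational action on $f_\pm$. The $O(1)$ in the statement is a conservative buffer absorbing any minor convention discrepancies with \cite{HL22}; the argument as written in fact delivers the stronger equality $t = t_f$.
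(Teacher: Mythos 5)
Your argument is correct, and in fact it establishes the sharper conclusion $t = t_f$ rather than merely $\log t = \log t_f + O(1)$. The paper itself offers no proof to compare against — it simply cites \cite{HL22} Lemma~5 — but the route you take is the natural one: the determinant/discriminant identity $\Disc(\gamma\cdot f) = \det(\gamma)^2\Disc(f)$ forces $\lambda_f = 1$, the stabilizer of $f_+$ is $\{k_0, k_{1/3}, k_{2/3}\}$ (read off from $k_\theta\cdot f_+$), and uniqueness of the $NAK$ decomposition of $\SL_2(\bR)$ then gives $t = t_f$ and $u = u_f$. The one point worth flagging is the discrepancy between the exact equality you obtain and the $O(1)$ in the stated conclusion: this suggests that \cite{HL22} Lemma~5 is formulated under a weaker hypothesis than the one quoted here (e.g.\ requiring only $|\Disc(f)| \asymp 1$, or using a slightly different normalization of the base point), under which the $O(1)$ slack would actually be needed. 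Under the hypothesis exactly as it appears in this paper, your proof is complete and delivers the stronger statement.
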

\begin{proof}
 This is \cite{HL22} Lemma 5.
\end{proof}

Integrals over $V_\pm$ may be expressed, for $f \in C_0(V_{\pm})$,
\begin{align}
\label{integration_formulae} \int_{V_+} f(v) \frac{dv}{\Disc(v)} &= \int_{-\infty}^\infty \int_{0}^\infty \int_0^{\frac{1}{3}} \int_0^\infty f(n_u a_t k_\theta d_\lambda \cdot f_+) \frac{d\lambda}{\lambda} d\theta \frac{dt}{t^3} du\\ \notag
  \int_{V_-} f(v) \frac{dv}{|\Disc(v)|} &= \int_{-\infty}^\infty \int_{0}^\infty \int_0^{1} \int_0^\infty f(n_u a_t k_\theta d_\lambda \cdot f_-) \frac{d\lambda}{\lambda} d\theta \frac{dt}{t^3} du.
\end{align}
Thus $|da \wedge db \wedge dc \wedge dd| = \frac{\lambda^3}{t^3} |d\lambda \wedge d\theta \wedge dt \wedge du|$.

\begin{lemma}\label{Jacobian_lemma}
 When $u, t, \theta$ vary in a Siegel set and $\lambda \geq 1$,  the change of coordinates $(a,b,c,d)= n_u a_t k_\theta d_\lambda \cdot f_{\pm}$ satisfies
 \begin{align}
  \frac{\partial(a,b,c,d)}{\partial( u,t, \theta, \lambda)} &= \begin{pmatrix}0 & O(\lambda t^{-3})& O(\lambda t^{-1})& O(\lambda t) \\ O(\lambda t^{-4}) & O(\lambda t^{-2})& O(\lambda) & O(\lambda t^2)\\ O(\lambda t^{-3}) & O(\lambda t^{-1})& O(\lambda t)& O(\lambda t^3)\\
  O(t^{-3}) & O(t^{-1}) & O(t) & O(t^3)\end{pmatrix}\\ \notag
  \frac{\partial( u,t, \theta, \lambda)}{\partial(a,b,c,d)} &= \begin{pmatrix}O(\lambda^{-1}t^5) & O(\lambda^{-1}t^4)& O(\lambda^{-1}t^3)& O(t^3)\\ O(\lambda^{-1}t^3)&O(\lambda^{-1}t^2)&O(\lambda^{-1}t)& O(t)\\
  O(\lambda^{-1}t) & O(\lambda^{-1})& O(\lambda^{-1}t^{-1})& O(t^{-1}) 
  \\
  O(\lambda^{-1}t^{-1})& O(\lambda^{-1}t^{-2})& O(\lambda^{-1}t^{-3}) & O(t^{-3})\end{pmatrix}.
 \end{align}

\end{lemma}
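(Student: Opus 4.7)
My plan is to prove both Jacobian bounds by direct computation from the explicit formulas for $(a,b,c,d) = n_u a_t k_\theta d_\lambda \cdot f_\pm$ stated earlier in the section. Those formulas express each of $a, b, c, d$ as $\lambda$ times a polynomial in $u$ of degree at most three whose coefficients are monomials in $t^{\pm 1}, t^{\pm 3}$ multiplied by trigonometric functions of $\theta$ (or $3\theta$). Every entry of the forward Jacobian is therefore a polynomial in $u$ with coefficients of a controlled form, multiplied by $\lambda$ for the first three columns or with the $\lambda$ stripped off for the $\partial/\partial\lambda$ column.

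For the forward matrix $M = \partial(a,b,c,d)/\partial(u,t,\theta,\lambda)$, I would differentiate each coordinate polynomial term by term with respect to each of $u$, $t$, $\theta$, $\lambda$. Since in a Siegel set $|u|$ and $|\theta|$ are bounded while $t \geq B > 0$ and $\lambda \geq 1$, each partial derivative is controlled by its dominant term in $t$ uniformly over the parameter region. The derivatives with respect to $u$, $t$, $\theta$ retain the linear $\lambda$ factor, while $\partial/\partial \lambda$ strips it off — which accounts for the absence of $\lambda$ in the last column. Collecting the dominant contributions produces uniform upper bounds of the claimed form, and some entries end up stated at a coarser size than strictly necessary in exchange for a clean uniform row/column pattern.

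For the inverse Jacobian, the natural route is via the adjugate formula $M^{-1} = \operatorname{adj}(M)/\det M$. The determinant $\det M = \lambda^3/t^3$ is read off from the integration formulas \eqref{integration_formulae}, which are exactly the change-of-variables identity $|da \wedge db \wedge dc \wedge dd| = \frac{\lambda^3}{t^3}|du \wedge dt \wedge d\theta \wedge d\lambda|$. Each entry of $M^{-1}$ is then a signed $3 \times 3$ minor of $M$ divided by $\lambda^3/t^3$. Expanding each such minor as a sum of six triple products and applying the triangle inequality bounds it by the worst-case product of three entries of $M$; multiplying by $t^3/\lambda^3$ then produces the stated uniform bounds. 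The $\lambda^{-1}$ appearing in the first three columns of the inverse matrix, and its absence from the fourth column, emerge automatically from the fact that exactly one of the three rows entering each cofactor is the $\lambda$-column, whose entries lack a $\lambda$ factor.

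The main obstacle is combinatorial rather than analytic: one must verify, for each of the sixteen positions of $M^{-1}$, that the worst-case product of three entries in the relevant minor of $M$, after division by $\lambda^3/t^3$, reproduces the claimed $t$-power scaling and the $\lambda^{-1}$-or-$\lambda^0$ pattern. No analytic tool beyond elementary differentiation and the cofactor formula is needed; the whole argument is bookkeeping against the explicit polynomial expressions for $n_u a_t k_\theta d_\lambda \cdot f_\pm$ and the uniform Siegel-set inequalities $|u| \leq A$, $t \geq B$, $\lambda \geq 1$.
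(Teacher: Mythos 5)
Your proposal is correct and gives a self-contained argument. In the paper itself, Lemma~\ref{Jacobian_lemma} is proved only by citation (``This is \cite{HL22} Lemma~6''), so there is no paper-internal proof to compare against; the direct route you take --- differentiating the explicit homogeneous-coordinate formulas for the forward Jacobian, and bounding the entries of the inverse through the adjugate formula $M^{-1}=\operatorname{adj}(M)/\det M$ with $|\det M|=\lambda^3/t^3$ read off from the integration identity --- is the natural one and is evidently what the cited lemma in \cite{HL22} does.

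A few small points worth tightening in a write-up. First, the matrix convention in the statement is $(i,j)\mapsto \partial(\cdot)_j/\partial(\cdot)_i$ (rows indexed by $u,t,\theta,\lambda$; columns by $a,b,c,d$), which you should state explicitly when you differentiate; otherwise the $(1,1)$ entry would not be $0$. Second, your sentence about the $\lambda$-pattern of the inverse conflates rows and columns: the relevant structure is that row $4$ of $M$ (the $\partial/\partial\lambda$ row) is the unique row without a $\lambda$ factor, so cofactors landing in columns $1$--$3$ of $M^{-1}$ (which delete some other row of $M$) retain that $\lambda$-free row and are $O(\lambda^2)$, while cofactors landing in column $4$ delete row $4$ and are $O(\lambda^3)$; dividing by $\lambda^3/t^3$ produces the $\lambda^{-1}$ versus $\lambda^{0}$ dichotomy. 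Third, the triangle-inequality bound on a $3\times 3$ cofactor is tight here because the entry sizes of $M$ factor as $O(\lambda^{\alpha_i}t^{\beta_i+\gamma_j})$ --- a product pattern in the row and column indices --- so all six permutation products in any minor have identical order of magnitude; it is worth recording this observation, since it is the reason the sixteen cases go through without surprises. Finally, the integration identity pins down only $|\det M|$, not its sign, but only the absolute value enters the bounds.
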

\begin{proof}
This is \cite{HL22} Lemma 6.
\end{proof}

The proof of the theorems rely on estimates for the derivatives and logarithmic derivatives of the discriminant which are uniform in the cuspidal parameter $t$.

For a multi-index $\alpha = (\alpha_1, \alpha_2, \alpha_3, \alpha_4)$ let $|\alpha| = \alpha_1 + \alpha_2 + \alpha_3 + \alpha_4$ and let 
$D^\alpha = D_a^{\alpha_1}D_b^{\alpha_2}D_c^{\alpha_3}D_d^{\alpha_4}$.
\begin{lemma}\label{derivative_lemma}
  Let $f= n_u a_t d_\lambda \cdot f_\pm$ with $u = O(1)$ and $t, \lambda \gg 1$. Then
\begin{equation}
 D^\alpha \Disc(f) = O\left(\lambda^{4-|\alpha|}t^{3|\alpha|}\right)
\end{equation}
and
\begin{equation}
 D^\alpha \log |\Disc(f)| = O\left(\lambda^{-|\alpha|}t^{3|\alpha|}\right)
\end{equation}
while
\begin{equation}
 \max_{D \in \{D_a^3, D_b^3, D_c^3, D_d^3\}} |D\log |\Disc(f)|| \gg \frac{1}{t^9 \lambda^3}.
\end{equation}

\end{lemma}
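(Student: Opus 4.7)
The plan is to exploit the $\GL_2$-covariance of the discriminant, $\Disc(g \cdot v) = (\det g)^2 \Disc(v)$, applied to $g = n_u a_t d_\lambda$, in order to transfer derivatives at $f$ to derivatives at the basepoint $f_\pm$, where they are harmless universal constants. Writing $T_g$ for the linear action of $g$ on $V_\bR$ and differentiating the covariance identity $k = |\alpha|$ times in $v$, the chain rule gives
\begin{equation*}
(D^{\otimes k} \Disc)(f) = \lambda^{4}\,(D^{\otimes k}\Disc)(f_\pm)\cdot (T_g^{-1})^{\otimes k}.
\end{equation*}
Since $T_g^{-1} = \lambda^{-1}\,\diag(t^{3},t,t^{-1},t^{-3})\,T_{n_u}^{-1}$ and $T_{n_u}^{-1}$ has $O(1)$ entries whenever $u = O(1)$, every entry of $T_g^{-1}$ is $O(\lambda^{-1}t^{3})$. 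Combined with the boundedness of the derivatives of $\Disc$ at $f_\pm$, this yields the first estimate $D^\alpha \Disc(f) = O(\lambda^{4-|\alpha|} t^{3|\alpha|})$.

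For the logarithmic derivative I would use Fa\`a di Bruno (equivalently, a short induction on $|\alpha|$). Writing $P = \Disc$, every $D^\alpha \log|P|$ is a finite linear combination of products $(\prod_j D^{\beta_j} P)/P^{k}$ with $k$ factors and $\sum_j |\beta_j| = |\alpha|$. Substituting the first bound and $|P(f)| = \lambda^{4}$, each such term is dominated by $\prod_j \lambda^{-|\beta_j|} t^{3|\beta_j|} = \lambda^{-|\alpha|} t^{3|\alpha|}$, which is the claim.

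For the lower bound I would evaluate $D_d^3 \log|\Disc|$ at $f$ directly. The key observation is that the $a$-coordinate of $f = n_u a_t d_\lambda \cdot f_\pm$ is identically $0$, since the basepoints $f_\pm$ lie in $\{a = 0\}$ and each of $n_u, a_t, d_\lambda$ preserves this locus. Reading off from $\Disc = b^2 c^2 - 4 a c^3 - 4 b^3 d - 27 a^2 d^2 + 18 abcd$, one has $D_d^3 \Disc \equiv 0$ as a polynomial identity, and $D_d^2 \Disc = -54 a^2$ vanishes at $f$. Hence the Fa\`a di Bruno expansion of $D_d^3 \log|\Disc(f)|$ collapses to the single term $2(D_d \Disc(f))^3/\Disc(f)^3 = 2(-4 b^3|_f)^3/(\pm \lambda^4)^3$, and $|b|_f \asymp \lambda/t$ with an explicit nonzero constant gives $|D_d^3 \log|\Disc(f)|| \asymp 1/(t^9 \lambda^3)$, matching the claimed lower bound on the maximum over the four cubic partial derivatives. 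The only real subtlety is organizing the chain-rule bookkeeping for the first bound uniformly in $|\alpha|$; once that is in place, the remaining assertions follow from explicit polynomial identities together with the algebraic fact that $a$ vanishes along the orbit slice under consideration.
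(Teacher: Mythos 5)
Your argument is correct on all three points, and cleanly organized. The paper does not reproduce a proof of this lemma---it cites \cite{HL22}, Lemma~7---so a direct comparison is not possible, but your route is natural and self-contained: the covariance $\Disc(g\cdot v)=(\det g)^2\Disc(v)$ combined with the entrywise bound $T_g^{-1}=O(\lambda^{-1}t^{3})$ (valid for $u=O(1)$, $t\geq 1$, since $T_g^{-1}=\lambda^{-1}\diag(t^{3},t,t^{-1},t^{-3})T_{n_u}^{-1}$ and $T_{n_u}^{-1}$ is lower-triangular with $O(1)$ entries) yields $D^\alpha\Disc(f)=O(\lambda^{4-|\alpha|}t^{3|\alpha|})$; Fa\`a di Bruno together with $|\Disc(f)|=\lambda^{4}|\Disc(f_\pm)|\asymp\lambda^{4}$ then gives the logarithmic bound; and the key observation that the slice $n_ua_td_\lambda\cdot f_\pm$ lies in $\{a=0\}$---so that $D_d^{3}\Disc\equiv0$ and $D_d^{2}\Disc|_{f}=-54a^{2}|_{f}=0$---collapses the third logarithmic derivative to the single term
\begin{equation*}
D_d^{3}\log|\Disc(f)|=\frac{2\bigl(D_d\Disc(f)\bigr)^{3}}{\Disc(f)^{3}}=\frac{-128\,b(f)^{9}}{\Disc(f)^{3}},
\end{equation*}
with $|b(f)|\asymp\lambda/t$, giving $|D_d^{3}\log|\Disc(f)||\asymp t^{-9}\lambda^{-3}$, which is the stated lower bound (in fact a two-sided one for the $D_d^{3}$ term). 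All the bookkeeping checks out.
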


\begin{proof}
This is \cite{HL22} Lemma 7.
\end{proof}

\begin{lemma}\label{f_variation_lemma}
 Let $f = (a,b,c,d) \in \bR^4$ be a form with $\lambda_f \geq 1$ and $t_f \gg 1$.  For every constant $C_1 > 1$ there is a constant $C_2 > 0$ so that if $\|\tilde{f} - f\|_2 \leq C_2 \frac{\lambda_f}{t_f^3}$ then 
 \begin{equation}
  \frac{\lambda_f}{C_1} \leq \lambda_{\tilde{f}} \leq C_1 \lambda_f.
 \end{equation}
 For every constant $C_3 > 1$ there is a constant $C_4 > 0$ so that if $\|\tilde{f} - f\|_2 \leq C_4 \frac{\lambda_f}{t_f^3}$ then 
 \begin{equation}
  \frac{t_f}{C_3} \leq t_{\tilde{f}} \leq C_3 t_f.
 \end{equation}
 
\end{lemma}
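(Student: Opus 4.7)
The plan is to combine an algebraic identity for $\lambda$ with a continuity/bootstrap argument for $t$ based on the Jacobian bounds of Lemma \ref{Jacobian_lemma}.

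\textbf{First claim (the $\lambda$-estimate).} The identity $|\Disc(f)| = \lambda_f^4$ follows from the $\SL_2$-invariance of $\Disc$ together with $\Disc(d_\lambda \cdot f_\pm) = \lambda^4 \Disc(f_\pm)$ and the normalization $|\Disc(f_\pm)|=1$. Since $\Disc$ is a homogeneous polynomial of degree $4$, its Taylor expansion at $f$ terminates at order $4$. By Lemma \ref{derivative_lemma}, $D^\alpha \Disc(f) = O(\lambda_f^{4-|\alpha|} t_f^{3|\alpha|})$, while the hypothesis gives $\|\tilde f - f\|_2^{|\alpha|} \leq (C_2 \lambda_f/t_f^3)^{|\alpha|}$, so each term of order $|\alpha| \geq 1$ contributes $O(C_2^{|\alpha|} \lambda_f^4)$. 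Summing,
\begin{equation}
|\Disc(\tilde f) - \Disc(f)| \ll C_2 \lambda_f^4,
\end{equation}
and dividing by $\lambda_f^4$ gives $\lambda_{\tilde f}^4/\lambda_f^4 = 1 + O(C_2)$. Taking fourth roots, $\lambda_{\tilde f}/\lambda_f \in [1/C_1, C_1]$ for $C_2$ sufficiently small depending on $C_1$.

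\textbf{Second claim (the $t$-estimate).} I would use a continuation argument along the straight line $f_s := f + s(\tilde f - f)$ for $s \in [0,1]$. Assuming $C_4 \leq C_2$, the first claim ensures $\lambda_{f_s} \asymp \lambda_f$ for all $s \in [0,1]$. Let $I \subseteq [0,1]$ be the maximal subinterval containing $0$ on which $t_{f_s} \in [t_f/C_3, C_3 t_f]$. Applying the row for $t$ of the inverse Jacobian in Lemma \ref{Jacobian_lemma} and Cauchy--Schwarz,
\begin{equation}
\Bigl|\tfrac{d}{ds} t_{f_s}\Bigr| \ll \bigl(\lambda_{f_s}^{-1} t_{f_s}^3 + t_{f_s}\bigr)\,\|\tilde f - f\|_2 \ll C_4\,\bigl(1 + \lambda_f/t_f^2\bigr)
\end{equation}
on $I$. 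Integrating from $0$ to $\sup I$ yields a total perturbation $|t_{f_{\sup I}} - t_f| \ll C_4(1 + \lambda_f/t_f^2)$, which for $C_4$ chosen small is strictly less than $t_f(1 - 1/C_3)$, forcing $\sup I = 1$ by continuity of the Iwasawa coordinates.

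\textbf{Expected main obstacle.} Closing the bootstrap for $t$ is delicate in the regime $\lambda_f \gg t_f^3$, where the naive bound above contains the term $C_4 \lambda_f/t_f^2$ that need not be small relative to $t_f$. Handling this requires either sharpening the Jacobian row for $t$ by tracking cancellations in the direction $(\tilde f - f)/\|\tilde f - f\|_2$, or using the explicit Iwasawa coordinate formulas for $n_u a_t k_\theta d_\lambda \cdot f_\pm$ to extract $t_f$ up to bounded factors from ratios of coefficients of $f$ (for instance $|d_f|/\lambda_f \asymp t_f^3$ when the relevant cosine factor is bounded away from zero), and verifying that such a ratio changes by a bounded multiplicative factor under a perturbation of size $C_4 \lambda_f/t_f^3$. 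This is the step I expect to require the most care; the $\lambda$-step in contrast is essentially a one-line consequence of Lemma \ref{derivative_lemma}.
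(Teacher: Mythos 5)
The paper contains no proof of this lemma; it is cited directly from \cite{HL22} (Lemmas 8 and 9), so there is no in-paper argument to compare against. On its own merits, your $\lambda$-estimate is correct: it uses $|\Disc(f)|=\lambda_f^4$, the degree-$4$ Taylor polynomial of $\Disc$, and the derivative bounds of Lemma \ref{derivative_lemma} to conclude $|\Disc(\tilde f)-\Disc(f)|\ll C_2\lambda_f^4$, which gives the claim after taking fourth roots.

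The ``main obstacle'' you flag in the $t$-estimate is not real; it comes from a misreading of the index convention in Lemma \ref{Jacobian_lemma}. In the matrix $\frac{\partial(a,b,c,d)}{\partial(u,t,\theta,\lambda)}$ as printed, rows are indexed by $(u,t,\theta,\lambda)$ and columns by $(a,b,c,d)$ (the transpose of the usual convention). This is visible from the $(1,1)$ entry being $0$: that is $\partial a/\partial u=0$, whereas $\partial a/\partial t=O(\lambda t^{-4})$ appears as entry $(2,1)$ exactly as listed, and entry $(1,2)=O(\lambda t^{-3})$ is $\partial b/\partial u$. Correspondingly, the inverse Jacobian $\frac{\partial(u,t,\theta,\lambda)}{\partial(a,b,c,d)}$ has rows indexed by $(a,b,c,d)$ and columns by $(u,t,\theta,\lambda)$, so the gradient $\nabla_{(a,b,c,d)}t$ you want is the second \emph{column}, namely $\bigl(O(\lambda^{-1}t^{4}),\,O(\lambda^{-1}t^{2}),\,O(\lambda^{-1}),\,O(\lambda^{-1}t^{-2})\bigr)$, not the second row $\bigl(O(\lambda^{-1}t^{3}),\,O(\lambda^{-1}t^{2}),\,O(\lambda^{-1}t),\,O(t)\bigr)$, which collects $\partial u/\partial b,\partial t/\partial b,\partial\theta/\partial b,\partial\lambda/\partial b$. (One can cross-check the corner entry $O(t^{-3})$ against $\partial_d\lambda$ via $4\lambda^3\partial_d\lambda=\pm\partial_d\Disc=O(\lambda^3t^{-3})$.) With the correct reading, $\|\nabla t\|_2\ll\lambda^{-1}t^{4}$ on the segment, and your continuation argument gives
\begin{equation}
\Bigl|\tfrac{d}{ds}t_{f_s}\Bigr|\ll \lambda_{f_s}^{-1}t_{f_s}^{4}\,\|\tilde f-f\|_2\ll C_4\,t_{f_s},
\end{equation}
so the bootstrap closes with no restriction such as $\lambda_f\ll t_f^3$, and the auxiliary ratio-of-coefficients device you propose is unnecessary. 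One minor point worth noting for both halves: Lemmas \ref{derivative_lemma} and \ref{Jacobian_lemma} assume $u=O(1)$, which the present statement does not assert explicitly; this should be read as implicit (take $g_f$ in a Siegel set).
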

\begin{proof}
This is \cite{HL22} Lemmas 8 and 9.
\end{proof}

\begin{lemma}\label{exponential_sum_lemma}
Let $Y\gg \tau^{\frac{4}{3}}T_1^{12}$ and $T_1 = o\left(Y^{\frac{1}{84}} \right)$.  With the choice $R = \frac{Y^{\frac{1}{4}}}{\tau^{\frac{7}{27}} T_1^{\frac{7}{3}}}$, we have the bound
 \begin{equation}
  \E_{y \in B_{R}}\left[|\Disc(f+y)|^{-i\tau}\right] \ll   \frac{T_1^{\frac{8}{3}}}{\tau^{\frac{1}{27}}}(\log \tau)^{\frac{4}{9}}.
 \end{equation}

\end{lemma}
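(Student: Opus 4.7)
The plan is to exploit the third-derivative lower bound from Lemma \ref{derivative_lemma} via van der Corput's third-derivative test in a single coordinate direction, with the other three coordinates handled trivially, and then combine with the trivial bound via a geometric-mean inequality.

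First, I would verify that the phase $\psi(y) := -\tau \log|\Disc(f+y)|$ is smooth on $B_R$ with uniform derivative control inherited from Lemma \ref{derivative_lemma}. Under the hypotheses $Y \gg \tau^{4/3}T_1^{12}$, $T_1 = o(Y^{1/84})$, and $t_f \ll T_1$, the radius $R = Y^{1/4}/(\tau^{7/27}T_1^{7/3}) \asymp \lambda_f/(\tau^{7/27}T_1^{7/3})$ is $\ll \lambda_f/t_f^3$, so Lemma \ref{f_variation_lemma} shows that the parameters $(t_{f+y}, \lambda_{f+y})$ remain comparable to $(t_f, \lambda_f)$ across $B_R$. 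In particular, at each $y \in B_R$ some coordinate direction $v(y) \in \{a,b,c,d\}$ satisfies $|\partial_{v(y)}^{3}\psi(y)| \gg \tau/(t^9\lambda^3)$.

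Next, I would partition $B_R$ via a smooth partition of unity into four regions $R_a, R_b, R_c, R_d$ on each of which a single coordinate direction dominates the third derivative throughout. On the region where $v$ dominates, fix the three orthogonal coordinates at arbitrary points in $[-R,R]$ and consider the inner one-dimensional integral in $v$. Van der Corput's third-derivative test gives a bound $\ll (\tau/(t^9\lambda^3))^{-1/3} = t^3 \lambda\, \tau^{-1/3}$, while trivially the integral is $\leq 2R$. Combining via $\min(A,B) \leq (AB)^{1/2}$ yields the one-dimensional bound $\ll (R t^3 \lambda)^{1/2}\,\tau^{-1/6}$. Integrating out the three trivial coordinate directions (each of length $\leq 2R$) and normalizing by $(2R)^4$ produces
\[
\bigl|\E_{y \in B_R}[|\Disc(f+y)|^{-i\tau}]\bigr| \;\ll\; (t^3 \lambda /R)^{1/2}\,\tau^{-1/6}.
\]
Substituting $\lambda \asymp Y^{1/4}$ and $R = \lambda/(\tau^{7/27}T_1^{7/3})$ gives $\ll t^{3/2}\,T_1^{7/6}\,\tau^{-1/27}$, and invoking $t \leq T_1$ yields the main bound $T_1^{8/3}\,\tau^{-1/27}$.

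The principal technical obstacle is producing the uniform third derivative lower bound that the van der Corput test requires on each piece of the partition: Lemma \ref{derivative_lemma} asserts only that the \emph{maximum} of the four pure third derivatives is large at each point, not that any single one is. A smooth dyadic partition on the scale of each $|\partial_v^3 \psi|$ should achieve the required uniformization, at the cost of a polylogarithmic overhead; the $(\log \tau)^{4/9}$ factor in the target bound should absorb these cutoff losses together with those from combining with the trivial bound at dyadic levels. The exponent arithmetic in the main bound does not depend on this bookkeeping, so the main challenge is purely the careful accounting of these logarithmic factors.
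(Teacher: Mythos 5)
Your proposal captures the right skeleton — van der Corput's third derivative test in a single coordinate direction, using Lemma \ref{derivative_lemma} for the lower bound and Lemma \ref{f_variation_lemma} for uniformity on $B_R$, and the exponent arithmetic you perform does produce the main term $T_1^{8/3}\tau^{-1/27}$ exactly. However, there is a structural gap: you treat $\E_{y\in B_R}$ as a \emph{continuous} average and apply the third-derivative test for oscillatory \emph{integrals}, which gives the clean $\mu^{-1/3}$ bound. But the paper's convention defines $\E_{r\in R}f(r) = \frac{1}{|R|}\sum_{r\in R}f(r)$, and in the application inside the proof of Theorem \ref{twisted_theorem} the identity $\sum_\xi(\cdots) = \E_{y\in B_R}\sum_\xi(\cdots)(\xi+y)$ requires $y\in\bZ^4$ for the re-indexing to be valid. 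So the object is a discrete exponential \emph{sum}, and the relevant estimate is van der Corput's derivative test for sums, which for the third derivative carries both a primary term $N^{1/2}\mu^{-1/6}$ \emph{and} a secondary term of the form $N\mu^{1/6}$, with implicit constants depending polynomially on the ratio between the upper bound $O(t^9/\lambda^3)$ and the lower bound $\gg 1/(t^9\lambda^3)$ for $|\partial_v^3\log|\Disc||$ from Lemma \ref{derivative_lemma} — a ratio that can be $t^{18}$.

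This matters quantitatively. In the regime $Y \asymp \tau^{4/3}T_1^{12}$ and $T_1 = (\log\tau)^{O(1)}$ (which is exactly the regime that feeds Proposition \ref{twisted_afe_proposition}), the normalized secondary term $\mu^{1/6} \asymp \tau^{1/6}/(t^{3/2}Y^{1/8}) \asymp 1/(t^{3/2}T_1^{3/2})$ is a fixed polylogarithmic quantity, which is \emph{not} $\ll T_1^{8/3}\tau^{-1/27}$; your continuous argument never produces this term and so never has to confront it. Taming it requires more than the cutoff bookkeeping you suggest — one needs a dyadic decomposition by the actual size of the third derivative (to reduce the ratio $A$ to $O(1)$ on each piece), and some extra structure or process to control the resulting secondary contributions, which is plausibly also the source of the quite specific exponent $(\log\tau)^{4/9}$ rather than a simple partition loss. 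Your instinct to localize to a dominant coordinate direction is sound (the number of sign-changes and direction-swaps along any line is bounded since all the relevant functions are algebraic), and the geometric-mean step is unobjectionable, but the discrete nature of the sum and the two-sided control of the third derivative both need to be confronted explicitly for the argument to close.
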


\begin{proof}
 This is \cite{HL22} Lemma 13.
\end{proof}


\section{Proof of Theorem \ref{twisted_theorem}}

We give the Proof of Theorem \ref{twisted_theorem} assuming the approximate functional equation, which is proved in the remainder of the paper using oscillatory integrals.

\begin{proof}[Proof of Theorem \ref{twisted_theorem}]
Let $F$ be a smooth function supported on a Siegel set, $K$ invariant with $\partial_t F, \partial_u F = O(1)$, such that $\sum_{\gamma \in \Gamma} F(\gamma \cdot g) = 1$.

By the variant of the approximate functional equation
 \begin{align}
  \sL^{\pm}(s,\phi)&= \sum_{x \in V_\zed \cap V_{\pm}} \frac{F(g_x)V(g_x, |\Disc(x)|)}{|\Stab(x)||\Disc(x)|^s}+ \sum_{\xi \in \hat{V}_\zed \setminus S} \frac{F(g_\xi)\hat{V}(g_\xi, |\Disc(\xi)|)}{|\Stab(\xi)||\Disc(\xi)|^{1-s}} + O_A(\tau^{-A}).
 \end{align}
The proof in case of forms and dual forms is the same, so we just show the proof for dual forms. By \cite{S72} p.169 Corollary, the number of dual forms of discriminant $m>0$ with $|\Stab(\xi)|>3$ is
\begin{equation}
 2\hat{h}_2(m) = \#\{(x,y) \in \zed^2: 81(x^2 + xy + y^2)^2 = m\}.
\end{equation}
Since the quadratic form is definite, summing over $x, y$ proves that the contribution of these forms is \begin{equation}\ll \sum_{(x,y) \neq (0,0)} \frac{1}{x^2 + xy +y^2} \left(1 + \frac{x^2 + xy + y^2}{\tau^2}\right)^{-A} \ll \log \tau.\end{equation}  Thus the factor of the stabilizer is dropped in what follows.

Write the dual sum as
\begin{equation}
 \sum_{\xi \in \hat{V}_\zed \setminus S} \sum_{n \geq A} \sigma\left(\frac{|\Disc(\xi)|}{2^n} \right)F(g_\xi)\frac{\hat{V}_{\pm}(g_\xi, |\Disc(\xi)|)}{|\Disc(\xi)|^{1-s}}.
\end{equation}
 The sum of interest is equal to the averaged sum 
\begin{equation}
 \Sigma = \sum_{\xi \in \hat{V}_\zed } \E_{y \in B_{R(Y)}} \sigma\left(\frac{|\Disc(\xi+y)|}{Y} \right)F(g_{\xi+y})\frac{\hat{V}_{\pm}(g_{\xi+y}, |\Disc(\xi+y)|)}{|\Disc(\xi+y)|^{1-s}}.
\end{equation}
 For those  $\xi$ for which $t \gg (\log \tau)^{O(1)}$, use item (1) of Theorem \ref{twisted_afe_theorem} to bound \begin{equation}\hat{V}_{\pm}(g_\xi, |\Disc(\xi)|) \ll \min\left(\exp(-c t^c)+O_A(y^{-2}\tau^{-A}), \left(1 + \frac{|\Disc(\xi)|}{\tau^2}\right)^{-A}\right).\end{equation}  Thus these can be discarded with error $O_A(\tau^{-A})$. 

Use that, for $v \in \bR^4$, $\|v\|=1$,
\begin{equation}
 \partial_v \lambda \ll t^3,\; \partial_v t \ll \frac{t^4}{\lambda},\; \partial_v u \ll \frac{t^5}{\lambda},\; \partial_v \theta \ll \frac{t^3}{\lambda}.
\end{equation}
Thus $|\Disc(\xi+y)| = |\Disc(\xi)| + O\left(t^3 Y^{\frac{3}{4}}R\right)$, and if $g_\xi = n_u a_t k_{\theta}$, $g_{\xi+y} = n_{u'} a_{t'}k_{\theta'}$, $g_{\xi+y}g_\xi^{-1} = n_{u_1}a_{t_1}k_{\theta_1}$ then 
\begin{equation}
 |u-u'| \ll \frac{t^5 R}{Y^{\frac{1}{4}}}, \qquad |t-t'| \ll \frac{t^4 R}{Y^{\frac{1}{4}}}, \qquad |\theta-\theta'| \ll \frac{t^3 R}{Y^{\frac{1}{4}}}
\end{equation}
and $|u_1|, |t_1|, |\theta_1| \ll \frac{t^{O(1)}R}{Y^{\frac{1}{4}}}$.  Applying item (2) of Theorem \ref{twisted_afe_theorem} it follows that
\begin{align}
 \Sigma &= \sum_{\xi \in \hat{V}_\zed, t\ll (\log t)^{O(1)}} \sigma\left( \frac{|\Disc(\xi)|}{Y}\right)F(g_\xi) \frac{\hat{V}_{\pm}(g_\xi, |\Disc(\xi)|)}{\sqrt{|\Disc(\xi)|}} \E_{y \in B_{R(Y)}}[|\Disc(\xi+y)|^{i\tau} ]\\ \notag&+ O \left(Y^{\frac{1}{4}}R\tau^\epsilon \left(1 + \frac{Y}{\tau^2}\right)^{-A}\right).
\end{align}
 Lemma \ref{exponential_sum_lemma} now applies as before and estimates the expectation over $y$ by $O\left(\frac{1}{\tau^{\frac{1}{27}-\epsilon}} \right)$ with a choice of $R$ of $\frac{Y^{\frac{1}{4}}}{\tau^{\frac{7}{27}-\epsilon}}$.  This completes the proof.

\end{proof}

\section{Estimates for oscillatory integrals}
This section collects together estimates for oscillatory integrals used in proving Theorem \ref{twisted_afe_theorem}.  Stein \cite{S93} gives a thorough treatment of oscillatory integrals as used in this section. In one variable, the standard type integral is 
\begin{equation}
 I(\lambda) = \int_a^b e^{i \lambda \phi(x)} \psi(x) dx
\end{equation}
where $\phi, \psi$ are smooth, $\phi'(x) \neq 0$ and $\psi$ is compactly supported in $[a,b]$.  Define the differential operator $Df(x) = \frac{1}{i\lambda \phi'(x)} \frac{df}{dx}$, with transpose operator $D^t f = -\frac{d}{dx}\left(\frac{f}{i\lambda \phi'(x)} \right)$.  Then $D^N e^{i\lambda \phi(x)} = e^{i\lambda \phi(x)}$, and hence, repeated integration by parts obtains, for any $N \geq 0$,
\begin{equation}
 \int_a^b e^{i\lambda \phi(x)}\psi(x)dx = \int_a^b e^{i\lambda \phi(x)} (D^t)^N(\psi)(x) dx.
\end{equation}

The following lemma is useful for giving uniform estimates.
\begin{lemma}
 The operator $(D^t)^N\psi$ is a sum of monomials with coefficients bounded only in terms of $N$,
 \begin{equation}
  (D^t)^N(\psi) = \sum_{\substack{\alpha = (\alpha_0, ..., \alpha_k)\\|\alpha| = N}} C_{N, \alpha} \frac{\partial^{\alpha_0}\psi \partial^{\alpha_1+1}\phi \partial^{\alpha_2+1}\phi ... \partial^{\alpha_k+1}\phi}{\lambda^N(\phi')^{N+k}}.
 \end{equation}

\end{lemma}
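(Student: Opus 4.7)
The plan is to prove this by induction on $N$. The base case $N=0$ is immediate: $(D^t)^0 \psi = \psi$ corresponds to $k=0$, $\alpha = (0)$, and coefficient $C_{0,(0)} = 1$, giving the single monomial $\psi$.

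For the inductive step, assume the claimed decomposition holds at level $N$ and apply $D^t f = -\frac{1}{i\lambda}\frac{d}{dx}\left(\frac{f}{\phi'}\right)$ to a generic term
\begin{equation}
T_\alpha = C_{N,\alpha}\frac{\partial^{\alpha_0}\psi \cdot \partial^{\alpha_1+1}\phi \cdots \partial^{\alpha_k+1}\phi}{\lambda^N (\phi')^{N+k}}.
\end{equation}
The factor $-\frac{1}{i\lambda}$ promotes the denominator to $\lambda^{N+1}$, and since $D^t T_\alpha = -\frac{1}{i\lambda}\frac{d}{dx}\left(\frac{T_\alpha}{\phi'}\right)$, the exponent of $\phi'$ first rises to $N+k+1$ and then the Leibniz rule distributes the derivative over three kinds of factors.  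I would analyze these in turn:
\begin{enumerate}
 \item Differentiating $\partial^{\alpha_0}\psi$ replaces $\alpha_0$ by $\alpha_0 + 1$.  The new multi-index has $|\alpha'| = N+1$ and $k' = k$, and the denominator is $(\phi')^{N+1+k'}$, matching the claim.
 \item Differentiating some $\partial^{\alpha_i+1}\phi$ (for $1 \leq i \leq k$) replaces $\alpha_i$ by $\alpha_i + 1$; again $k' = k$ and $|\alpha'| = N+1$.
 \item Differentiating $(\phi')^{-(N+k+1)}$ produces a factor $-(N+k+1)\phi''/(\phi')^{N+k+2}$.  This introduces a new index $\alpha_{k+1} = 1$ (since $\phi'' = \partial^{\alpha_{k+1}+1}\phi$), so $k' = k+1$ and $|\alpha'| = |\alpha| + 1 = N+1$, and the denominator becomes $(\phi')^{N+k+2} = (\phi')^{N+1+k'}$, again matching.
\end{enumerate}

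Each of these cases produces a term of the claimed form with $N$ replaced by $N+1$, and combining them over all starting multi-indices $\alpha$ with $|\alpha| = N$ yields the decomposition at level $N+1$.  The new coefficient in Case~3 is $-(N+k+1)/i$ times the previous $C_{N,\alpha}$, and since in every decomposition $k \leq |\alpha| = N$, the coefficient magnitude at step $N+1$ is bounded by a function of $N$ alone; the remaining $1/(i\lambda)$ factor is already absorbed into the $\lambda^{-(N+1)}$ normalization.  The only mildly delicate point is the bookkeeping in Case~3, where one must correctly identify that differentiating the denominator creates a brand-new $\phi$-derivative factor of order $2$ and thus corresponds to appending a new coordinate $\alpha_{k+1}=1$ to the multi-index; once this is observed the remaining verification is purely mechanical, and I do not anticipate any real obstacle beyond keeping the indices straight.
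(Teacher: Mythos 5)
Your argument is correct and is essentially the same as the paper's: both track, for each application of $D^t$, the three places the new derivative can land (on $\partial^{\alpha_0}\psi$, on a $\partial^{\alpha_i+1}\phi$ factor, or on the $(\phi')^{-(N+k)}$ denominator, the last of which introduces a fresh $\phi''$ and one extra power of $\phi'$); you simply phrase this bookkeeping as a formal induction, whereas the paper states it as a direct counting argument over $N$ applications.
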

\begin{proof}
 Applying $D^t$ $N$ times differentiates a total of $N$ times, multiplies by a factor of $\lambda$ $N$ times and divides by $\phi'$ $N$ times.  Each derivative goes on either $\psi$, the denominator, or a derivative of $\phi$ in the numerator.  Differentiating the denominator increases the power of the denominator by 1, and introduces a new factor of a derivative of $\phi$ in the numerator.
\end{proof}

In higher dimensions, Stein \cite{S93} proves the following estimates.
\begin{lemma}
 Suppose $\psi$ is smooth, has compact support, and that $\phi$ is a smooth real-valued function that has no critical points in the support of $\psi$.  Then 
 \begin{equation}
  I(\lambda) = \int_{\bR^n} e^{i\lambda \phi(x)} \psi(x)dx = O(\lambda^{-N})
 \end{equation}
as $\lambda \to \infty$ for every $N \geq 0$.

\end{lemma}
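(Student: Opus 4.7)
The plan is to imitate the one-dimensional argument just presented, replacing the scalar operator $D$ by a first-order differential operator built from the gradient of $\phi$, which is nonvanishing on the support of $\psi$ by hypothesis. Concretely, I would set
\begin{equation}
 Lf = \frac{1}{i\lambda |\nabla \phi|^2}\, \nabla \phi \cdot \nabla f,
\end{equation}
which is well defined on a neighborhood of $\supp \psi$ since $\nabla \phi \neq 0$ there. A direct computation gives $L(e^{i\lambda \phi}) = e^{i\lambda \phi}$, so iterating $N$ times yields $L^N(e^{i\lambda \phi}) = e^{i\lambda \phi}$.

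Next I would integrate by parts $N$ times. The (formal) transpose of $L$ is
\begin{equation}
 L^t g = -\nabla \cdot \left(\frac{\nabla \phi}{i\lambda |\nabla \phi|^2}\, g\right),
\end{equation}
and because $\psi$ is compactly supported the boundary terms vanish, giving
\begin{equation}
 I(\lambda) = \int_{\bR^n} L^N(e^{i\lambda \phi})\, \psi \, dx = \int_{\bR^n} e^{i\lambda \phi}\, (L^t)^N \psi \, dx.
\end{equation}

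By induction on $N$, $(L^t)^N \psi$ is a finite sum of terms, each of the form $\lambda^{-N}$ times a product of derivatives of $\psi$ and of $\phi$, divided by a power of $|\nabla \phi|^2$; this is the multi-dimensional analog of the monomial formula in the preceding lemma, and its verification is routine (each application of $L^t$ introduces one factor of $\lambda^{-1}$, at most one derivative falling on $\psi$ or $\phi$, and at most one extra factor of $|\nabla \phi|^{-2}$). On the compact set $\supp \psi$, the hypotheses give $|\nabla \phi| \geq c > 0$ and all derivatives of $\phi$ and $\psi$ are bounded, so
\begin{equation}
 \left|(L^t)^N \psi(x)\right| \leq C_N\, \lambda^{-N} \mathbf{1}_{\supp \psi}(x).
\end{equation}
Integrating $|e^{i\lambda \phi}| = 1$ against this bound gives $|I(\lambda)| \leq C_N \vol(\supp \psi)\, \lambda^{-N}$, which is the desired estimate.

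The only mildly delicate point is bookkeeping in the inductive description of $(L^t)^N \psi$, since differentiating the factor $|\nabla \phi|^{-2}$ produces new factors involving second derivatives of $\phi$; but this is entirely parallel to the one-variable monomial lemma already recorded, and on a compact set disjoint from the critical set of $\phi$ all the resulting rational expressions in $\nabla \phi$ and its derivatives are uniformly bounded. No other obstacle arises: compact support eliminates boundary terms, and the assumption of no critical points supplies the uniform lower bound $|\nabla \phi| \geq c$ that makes $L$ well defined and its iterates uniformly controlled.
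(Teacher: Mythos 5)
Your proof is correct; the paper does not supply its own argument here but simply cites Stein's Proposition~4 (\cite{S93}, p.~341), and the operator $L = \frac{1}{i\lambda|\nabla\phi|^2}\nabla\phi\cdot\nabla$ together with repeated integration by parts is precisely the standard nonstationary-phase argument given there. The key points you identify — compact support to kill boundary terms, and the uniform lower bound $|\nabla\phi|\geq c>0$ on $\supp\psi$ to control the rational expressions produced by iterating $L^t$ — are exactly what makes the estimate go through.
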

\begin{proof}
 This is \cite{S93} Proposition 4 on p.341.
\end{proof}

When there is a critical point $x_0$ of $\phi$ such that $\left[ \frac{\partial^2 \phi}{\partial x_i \partial x_j}\right](x_0)$ is invertible, then $x_0$ is non-degenerate.
\begin{lemma} \label{stationary_phase_asymptotic_lemma}
 Suppose $\phi(x_0) = 0$, and $\phi$ has a nondegenerate critical point at $x_0$. If $\psi$ is supported in a sufficiently small neighborhood of $x_0$, then
 \begin{equation}
  \int_{\bR^n} e^{i\lambda \phi(x)} \psi(x) dx \sim \lambda^{-\frac{n}{2}} \sum_{j=0}^\infty a_j \lambda^{-j}
 \end{equation}
as $\lambda \to \infty$. The coefficients $a_j$ are specified by finitely many derivatives of $\phi$ and $\psi$ at $x_0$. The asymptotic holds in the sense that, for every $r, N \geq 0$,
\begin{equation}
 \left(\frac{d}{d\lambda}\right)^r\left[I(\lambda) - \lambda^{-\frac{n}{2}} \sum_{j=0}^N a_j \lambda^{-j}\right] = O\left(\lambda^{-\frac{n}{2}-r-N-1}\right). 
\end{equation}

\end{lemma}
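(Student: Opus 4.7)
The plan is to reduce the general nondegenerate critical point to a pure quadratic phase via the Morse lemma, then expand the amplitude by Taylor's theorem and evaluate (or bound) each resulting Gaussian integral. Specifically, since $\phi$ has a nondegenerate critical point at $x_0$ with $\phi(x_0)=0$, by the Morse lemma there is a smooth diffeomorphism $y=\Psi^{-1}(x)$ on a neighborhood of $x_0$ with $\Psi(0)=x_0$ and
\begin{equation}
\phi(\Psi(y)) = Q(y) := \tfrac{1}{2}\sum_{j=1}^{n}\epsilon_j y_j^2,\qquad \epsilon_j = \pm 1,
\end{equation}
where the signs $\epsilon_j$ are the signs of the eigenvalues of the Hessian $H=[\partial_i\partial_j\phi](x_0)$. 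Taking $\psi$ supported in a neighborhood in which this change of variables is valid and setting $\tilde{\psi}(y)=\psi(\Psi(y))|\det D\Psi(y)|$, the integral becomes
\begin{equation}
I(\lambda) = \int_{\bR^n} e^{i\lambda Q(y)}\tilde{\psi}(y)\,dy,
\end{equation}
with $\tilde{\psi}$ smooth and compactly supported near $0$.

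Next I would Taylor expand $\tilde{\psi}$ at the origin to order $2N+1$,
\begin{equation}
\tilde{\psi}(y) = \sum_{|\alpha|\le 2N+1} \frac{\partial^{\alpha}\tilde{\psi}(0)}{\alpha!}\,y^\alpha + R_N(y),\qquad R_N(y)=O(|y|^{2N+2}),
\end{equation}
multiplied by a fixed smooth cutoff $\chi$ equal to $1$ on a small ball. For each monomial term one can replace $\chi(y)$ by $1$ at the cost of an error $O(\lambda^{-M})$ for arbitrary $M$, since the phase $Q$ has no critical point away from $0$ and the previous non-stationary lemma applies. The resulting pure Gaussian integrals $\int_{\bR^n} e^{i\lambda Q(y)}y^\alpha\,dy$ vanish when any $\alpha_j$ is odd and otherwise evaluate by a standard contour deformation of $\int_{\bR} e^{i\lambda\epsilon y^2/2}y^{2k}\,dy$ to give a constant times $\lambda^{-n/2-|\alpha|/2}$. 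Collecting the terms with $|\alpha|=2j$ produces the coefficient $a_j$, which by construction depends only on finitely many derivatives of $\phi$ and $\psi$ at $x_0$. The remainder $R_N$ contributes a term of size $O(\lambda^{-n/2-N-1})$ by the rescaling $y=\lambda^{-1/2}z$ combined with the polynomial bound on $R_N$ and the compactness of the support.

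For the derivative bound in $\lambda$, each application of $d/d\lambda$ under the integral brings down a factor of $i\phi(x)$. After the Morse change of variables this becomes $iQ(y)$, which vanishes quadratically at the critical point. Repeating the Taylor/stationary phase argument with $iQ(y)\tilde{\psi}(y)$ in place of $\tilde{\psi}(y)$ (and iterating $r$ times), the leading order of the asymptotic shifts by $r$ powers of $\lambda^{-1}$, since the monomial $Q(y)$ contributes two extra powers of $y$ and hence one extra $\lambda^{-1}$ after Gaussian integration. This yields the claimed remainder $O(\lambda^{-n/2-r-N-1})$.

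The main obstacle is the uniform control required for the remainder across the three sources of error (cutoff, Taylor remainder, derivative in $\lambda$), which must be combined while keeping track of the fact that the error constants depend on a finite number of derivatives of $\phi$, $\psi$, and the Morse diffeomorphism $\Psi$ on the support. The cleanest way is to prove the $r=0$ case first with the precise coefficient formula involving $\langle H^{-1}D,D\rangle^j\tilde{\psi}(0)$, then deduce the $r\ge 1$ case by differentiating the asymptotic series termwise and comparing to the stationary phase expansion of $I^{(r)}(\lambda)$, justifying the termwise differentiation via the quantitative remainder estimate already established.
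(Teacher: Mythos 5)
The paper itself gives no proof of this lemma; it simply cites Stein \cite{S93}, p.~344, Proposition 6, whose argument also passes through the Morse lemma and a quadratic-phase reduction, so your overall strategy is the right one. However, your proposal has a genuine gap in the remainder estimate. You claim that the contribution of the Taylor remainder $R_N$ is $O(\lambda^{-n/2-N-1})$ ``by the rescaling $y=\lambda^{-1/2}z$ combined with the polynomial bound on $R_N$ and the compactness of the support.'' This does not work: after rescaling the integral becomes
\begin{equation}
\lambda^{-n/2}\int e^{iQ(z)}\,R_N(\lambda^{-1/2}z)\,\chi(\lambda^{-1/2}z)\,dz,
\end{equation}
where the support in $z$ now has radius $O(\lambda^{1/2})$. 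If you merely use $|R_N(\lambda^{-1/2}z)|\ll \lambda^{-(N+1)}|z|^{2N+2}$ and integrate in absolute value over $|z|\ll\lambda^{1/2}$, you get $\lambda^{-n/2}\cdot\lambda^{-(N+1)}\cdot\lambda^{(N+1)+n/2}=O(1)$, i.e.\ no gain at all. The required bound uses the cancellation of the oscillating factor, not just the pointwise size of $R_N$ and the support. The standard fix is either (i) write $R_N(y)=\sum_{|\alpha|=2N+2}y^\alpha g_\alpha(y)$ by the integral form of Taylor's theorem, and then convert each factor $y_j$ into a $\lambda^{-1}$ via $y_j e^{i\lambda Q}=\tfrac{\epsilon_j}{i\lambda}\partial_{y_j}e^{i\lambda Q}$, iterating the integration by parts and controlling the term where derivatives land on $y^\alpha$ versus on $g_\alpha\chi$, or (ii) follow Stein and pass to the Fourier side via Parseval, using the explicit Gaussian Fourier transform $\widehat{e^{i\lambda Q}}=c\,\lambda^{-n/2}e^{-i\langle A^{-1}\xi,\xi\rangle/(2\lambda)}$ and Taylor-expanding the small quadratic phase in $\xi$ against $\hat{\tilde\psi}(\xi)$, whose rapid decay gives the remainder bound for free.

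A second, smaller gap is in the step ``replace $\chi(y)$ by $1$ at the cost of $O(\lambda^{-M})$ via the non-stationary lemma.'' The function $y^\alpha(1-\chi(y))$ is not compactly supported, so the non-stationary phase lemma as stated in the paper (which assumes compact support of the amplitude) does not directly apply; one needs a version that exploits the growth $|\nabla Q(y)|\asymp|y|$ at infinity (or a convergence factor $e^{-\varepsilon|y|^2}$, $\varepsilon\downarrow 0$) to make sense of and to bound $\int e^{i\lambda Q}y^\alpha(1-\chi)\,dy$. Finally, the treatment of the $\lambda$-derivatives is the right idea (differentiating brings down $iQ(y)$, which vanishes to second order at the critical point), but as you yourself note it should be carried out by first establishing the $r=0$ case with a quantitative remainder and then verifying that the coefficients of the expansion of $\int e^{i\lambda Q}(iQ)^r\tilde\psi$ match the termwise $r$-th derivative of $\lambda^{-n/2}\sum a_j\lambda^{-j}$; merely observing the shift in powers is not enough to conclude the claimed bound.
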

\begin{proof}
 This is \cite{S93} p.344 Proposition 6.
\end{proof}

In Section \ref{twisted_afe_section} the following oscillatory integral is needed.  Let $g_1 \in G^1$ and $w \in \bC$ and define  
\begin{align}
 &E(g_1, w)=\int_0^\infty\int_{G^1}  \lambda^{4(1-s+w)} \exp\left(- \tr g_2^t (g_1^{-1})^t g_1^{-1}g_2 \right)e\left(-\lambda \langle k_{\theta_2} \cdot f_{\pm}, a_{t_2}^{-1} n_{u_2}^{-1} \cdot \tilde{f}_{\pm}\rangle \right)\frac{d\lambda}{\lambda} dg_2.
\end{align}
Note that this integral is conditionally convergent in $\lambda$.  However, the integral in $G^1$ is of the type described by $I(\lambda)$, and thus outside a bounded set of points of stationary phase, saves an arbitrary power of $\lambda$, hence guaranteeing convergence.  
The contribution of the points of stationary phase are meromorphic in $w$ and hence are equal in the sense of meromorphic continuation to their regularized value.  We choose to regularize by introducing a smooth partition of unity $\sum_{n \in \zed} \sigma\left(\frac{\lambda}{2^n} \right)=1$ where $\sigma$ has compact support on $\bR^+$.  The regularized integral is the sum of the integrals
\begin{equation}
 \int_0^\infty\int_{G^1} \sigma\left(\frac{\lambda}{2^n} \right) \lambda^{4(1-s+w)} \exp\left(- \tr g_2^t (g_1^{-1})^t g_1^{-1}g_2 \right)e\left(-\lambda \langle k_{\theta_2} \cdot f_{\pm}, a_{t_2}^{-1} n_{u_2}^{-1}\cdot \tilde{f}_{\pm}\rangle \right)\frac{d\lambda}{\lambda} dg_2
\end{equation}
which give the correct value in the domain of absolute convergence and have analytic continuation. The evaluation of these points can be obtained by using Lemma \ref{stationary_phase_asymptotic_lemma} to separate an asymptotic main term in falling powers of $\lambda$, together with an error term that is absolutely convergent in $\lambda$.  The main term becomes a Mellin transform of sin and cosine functions, which have meromorphic continuation. 

In coordinates, the trace in the exponential function of $E(g_1,w)$ is equal to 
\begin{equation}
 \left(\frac{t_1}{t_2}\right)^2 + \left(\frac{t_2}{t_1}\right)^2 + \left(\frac{t_2}{t_1}u_2 - \frac{t_1}{t_2}u_1 \right)^2.
\end{equation}
We have
\begin{align}
 k_\theta \cdot f_{-} &= \frac{1}{\sqrt{2}}(s(\theta), c(\theta), s(\theta), c(\theta)),\\ \notag
 k_\theta \cdot f_{+} &= \frac{1}{(108)^{\frac{1}{4}}}(s(3\theta), 3c(3\theta), -3s(3\theta), -c(3\theta)),
\end{align}
and
\begin{align}
 a_{\frac{1}{t}}n_{-u} \cdot f_{-} &= \frac{1}{\sqrt{2}}\left(0, t, \frac{-2u}{t}, \frac{1+u^2}{t^3}\right),\\ \notag
 a_{\frac{1}{t}}n_{-u} \cdot f_{+} &= \frac{1}{(108)^{\frac{1}{4}}}\left( 0, 3t, \frac{-6u}{t}, \frac{-1+3u^2}{t^3}\right).
\end{align}
This gives rise to four pairings,
\begin{equation}
 \begin{tabular}{|l|l|}
 \hline
  $--$& $\frac{1}{2} \left(\frac{1 + \frac{1}{3}t^4 + u^2}{t^3}s(\theta) + \frac{2u}{3t}c(\theta) \right)$\\
  \hline
  $-+$& $\frac{1}{2 \cdot 3^{\frac{3}{4}}}\left(\frac{-1 + t^4 +3u^2}{t^3}s(\theta) + \frac{2u}{t}c(\theta) \right) $\\
  \hline
  $+-$ & $\frac{1}{2\cdot 3^{\frac{3}{4}}} \left(\frac{1 -t^4 + u^2}{t^3}s(3\theta) +\frac{2u}{t}c(3\theta)\right)$\\
  \hline
  $++$ & $\frac{1}{2 \cdot 3^{\frac{3}{2}}}\left(\frac{-1 -3t^4 +3u^2}{t^3}s(3\theta) + \frac{6u}{t}c(3\theta) \right) $\\ \hline
 \end{tabular}
\end{equation}
We show how to analyze the first pairing, and leave the remaining pairings to the reader, which are similar.

Let 
\begin{equation}
 P_{\pm, \pm}(t,u) = 2 \int_{\bR/\zed} \langle k_\theta \cdot f_{\pm}, a_t^{-1}n_u^{-1} \cdot \tilde{f}_{\pm}\rangle^2 d\theta.
\end{equation}
Thus, after a change of coordinates,
\begin{align}
 &E(g_1, w)=\\
 \notag & \int_0^\infty\int_{G^1}  \lambda^{4(1-s+w)} \exp\left(- \tr g_2^t (g_1^{-1})^t g_1^{-1}g_2 \right)e\left(\lambda \sqrt{P_{\pm,\pm}(t,u)}c(\theta)\right) \frac{dt}{t^3} du d\theta \frac{d\lambda}{\lambda}.
\end{align}
Define the phase function
\begin{equation}
 \Phi(\lambda, \theta, t, u) = -4(\tau -\IM(w))\log \lambda + 2\pi \lambda \sqrt{P_{\pm,\pm}\left(t,u \right)}c(\theta)
\end{equation}
and the weight function
\begin{equation}
 W(\lambda, t, u) = \frac{\lambda^{1 + 4 \RE(w)}}{t^3} \exp\left(-\left(\frac{t_1}{t} \right)^2 -\left(\frac{t}{t_1}\right)^2(1 + u^2) \right)
\end{equation}
so that 
\begin{equation}
 E(g_1, w) = \int_{-\infty}^\infty \int_0^1 \int_0^\infty\int_0^\infty e^{i \Phi\left(\lambda, \theta, t, u + \left(\frac{t_1}{t} \right)^2 u_1\right)}W(\lambda, t, u) d\lambda d\theta dt du.
\end{equation}

The following Lemma records information regarding the points of stationary phase of $\Phi$. 
\begin{lemma}\label{oscillatory_integral_derivatives}
 Let $\IM(w) = O\left(\tau^{\frac{1}{2}+\epsilon}\right)$.  The stationary phase equation $D\Phi = 0$ has boundedly many solutions, all of which satisfy $\theta \in \zed$, $u = O(1)$, $\log t = O(1)$ and $\lambda \asymp \tau$.  Let $\lambda' = \frac{\lambda}{\tau - \IM(w)}$.  At a point of stationary phase
 \begin{equation}
  \frac{1}{\tau - \IM(w)} D^n \Phi(\lambda', \theta, t, u) = O_n(1).
 \end{equation}
The conditions
\begin{equation}
 \partial_\theta \Phi = \partial_\lambda \Phi = \partial_t \Phi = 0
\end{equation}
imply $\theta \in \zed$, $\lambda = \frac{2(\tau - \IM(w))}{\pi \sqrt{P_{\pm,\pm}(t,u)}}$ and $\partial_t P_{\pm,\pm}(t,u) = 0$.  This requires $t \gg 1$ and $u^2 \asymp t^4$.
\end{lemma}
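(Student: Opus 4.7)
The plan is to solve the system $D\Phi = 0$ one equation at a time, in the order $\partial_\theta\Phi = 0$, $\partial_\lambda\Phi = 0$, $\partial_t\Phi = 0$, $\partial_u\Phi = 0$, and then to verify the uniform derivative bound by a rescaling argument. Directly computing from the definition
\begin{equation*}
\Phi(\lambda,\theta,t,u) = -4(\tau-\IM(w))\log\lambda + 2\pi\lambda\sqrt{P_{\pm,\pm}(t,u)}\,c(\theta),
\end{equation*}
the equation $\partial_\theta\Phi = -4\pi^2\lambda\sqrt{P_{\pm,\pm}(t,u)}\,s(\theta) = 0$ forces $s(\theta) = 0$ in the region where the phase is smooth ($P_{\pm,\pm} > 0$). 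Then $\partial_\lambda\Phi = 0$ reads $2\pi\sqrt{P_{\pm,\pm}(t,u)}\,c(\theta) = 4(\tau-\IM(w))/\lambda$; under the hypothesis $\IM(w) = O(\tau^{1/2+\epsilon})$ the right-hand side is positive, so $c(\theta) > 0$, which together with $s(\theta) = 0$ gives $c(\theta) = 1$, hence $\theta \in \zed$, and the stated formula for $\lambda$.

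With $\theta$ pinned down, the equations $\partial_t\Phi = \partial_u\Phi = 0$ become, after dividing out the nonzero factor $\pi\lambda c(\theta)/\sqrt{P_{\pm,\pm}(t,u)}$, the algebraic conditions $\partial_t P_{\pm,\pm}(t,u) = \partial_u P_{\pm,\pm}(t,u) = 0$. For each of the four sign combinations I would substitute the explicit rational expression for $P_{\pm,\pm}$ obtained by squaring the corresponding table entry and integrating over $\theta \in \bR/\zed$, reducing the system to a pair of polynomial equations. A direct case-by-case analysis---for example in the $(-,-)$ case $\partial_t P_{--} = 0$ simplifies to $t^8 - (6+10u^2)t^4 - 27(1+u^2)^2 = 0$, which is a positive monotone algebraic curve relating $u^2$ and $t^4$---shows that the real positive solution set is finite and lies in the stated parameter region $t \gg 1$ with the asserted quantitative comparison of $u^2$ against $t^4$ along the curve. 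Points at which $P_{\pm,\pm}(t,u) = 0$, such as $(t,u) = (1,0)$ for $P_{-+}$, where the square root loses smoothness, must be excluded from the stationary phase analysis and treated by a smooth cut-off near the singular set.

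Once the critical set is shown to be finite and contained in a compact region on which $P_{\pm,\pm}(t,u)$ is bounded above and below, the formula for $\lambda$ immediately gives $\lambda \asymp \tau - \IM(w) \asymp \tau$. For the uniform bound $(\tau-\IM(w))^{-1} D^n\Phi(\lambda',\theta,t,u) = O_n(1)$, substitute $\lambda' = \lambda/(\tau-\IM(w))$, which is of size $O(1)$ at the critical points. Of the two terms of $\Phi$, only $-4(\tau-\IM(w))\log\lambda$ carries a factor of $\tau-\IM(w)$, and its derivatives in $\lambda$ evaluated at $\lambda' = O(1)$ are $(\tau-\IM(w))\cdot O_n(1)$; the second term $2\pi\lambda'\sqrt{P_{\pm,\pm}(t,u)}c(\theta)$, with $\lambda'$ bounded, has derivatives bounded independently of $\tau$ in the compact critical region. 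Dividing by $\tau-\IM(w)$ yields the estimate.

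The main obstacle will be the algebraic reduction in the second paragraph: verifying explicitly for all four sign combinations that $\partial_t P_{\pm,\pm} = \partial_u P_{\pm,\pm} = 0$ cuts out only boundedly many points in the asserted region, and dealing with the non-smooth square-root singularity of the phase along the locus $P_{\pm,\pm}(t,u) = 0$, which requires a separate local analysis via a smooth partition of unity around each singular point.
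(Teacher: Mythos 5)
Your approach is the same as the paper's: solve $\partial_\theta\Phi=\partial_\lambda\Phi=0$ to pin down $\theta\in\zed$ and $\lambda$, reduce $\partial_t\Phi=\partial_u\Phi=0$ to the algebraic system $\partial_t P_{\pm,\pm}=\partial_u P_{\pm,\pm}=0$, and then argue that this cuts out a bounded set in the asserted region. Your explicit polynomial $t^8-(6+10u^2)t^4-27(1+u^2)^2=0$ for $\partial_t P_{--}=0$ is correct (one can check that the positive root of the quadratic in $t^4$ gives $t^4\geq 9$, so $t\geq 3^{1/2}$, matching the claim $t\gg 1$). The derivative-bound argument via the rescaling $\lambda'=\lambda/(\tau-\IM(w))$ and compactness is also the paper's.

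Where you leave a gap is exactly the step you flag as the main obstacle, and the paper's proof is worth comparing here because it is more efficient than a brute-force case-by-case search. The paper observes that after factoring out $u$, the equation $\partial_u P_{\pm,\pm}(t,u)=0$ is \emph{linear} in $u^2$ (since $P_{\pm,\pm}$ is a polynomial in $u^2$ and $t^{\pm 1}$), so it determines $u^2$ explicitly as a rational function of $t$; substituting this into $\partial_t P_{\pm,\pm}=0$, which is itself polynomial in $u^2$, gives a single polynomial equation in $t$ alone, hence boundedly many roots, all of which are $O(1)$. This circumvents the need to analyze a one-parameter curve. In your write-up, you only exhibit the $\partial_t P=0$ locus, which is an entire curve, and assert finiteness of the intersection with $\partial_u P=0$ without deriving it. The paper also explicitly rules out solutions with $t\ll 1$ by substituting $s=1/t$ and checking that $\partial_s$ of the rescaled expression is sign-definite for $s$ large; you should either make this same substitution for each sign case or note (as your explicit $(-,-)$ computation does show) that the root is bounded away from $0$ for the case you worked out, and then repeat this for the others.

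Your observation about the zeros of $P_{\pm,\pm}$ (e.g.\ $P_{-+}(1,0)=P_{+-}(1,0)=0$) where $\sqrt{P_{\pm,\pm}}$ loses smoothness is correct, but it is not strictly needed for this lemma: stationary points of $\Phi$ require $P_{\pm,\pm}>0$, and in the paper the region where $P_{\pm,\pm}$ is small is handled by a non-stationary-phase integration by parts inside the proof of Lemma~\ref{E_est_lemma}, not here. So your proposed cut-off is consistent with what the paper does downstream, but does not affect the content of Lemma~\ref{oscillatory_integral_derivatives}.
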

\begin{proof}
 Solving $\partial_\lambda \Phi = \partial_\theta\Phi = 0$ imposes $c(\theta)=1$ and $\lambda = \frac{2 (\tau-\IM(w))}{\pi\sqrt{P_{\pm,\pm}(t,u)}}$.  Treating $\partial_t \Phi$ and $\partial_u \Phi$ we may impose $\partial_t P_{\pm,\pm}(t,u) = 0$ and $\partial_u P_{\pm,\pm}(t,u) = 0$ instead.  If $t \ll 1$ replace $t$ with $s = \frac{1}{t}$ and impose $\partial_s P_{\pm,\pm}(s^{-1},u) = 0$.  For some non-zero constants $\alpha, \beta, \gamma, \delta$ this requires
 \begin{align}
  &\frac{\partial}{\partial s} \left[ s^2 \left(\left(\alpha s^2 + \frac{\beta}{s^2} + \gamma u^2s^2\right)^2 + \delta^2 u^2 \right)\right] \\
  \notag&= 2s \left[\left(\left(\alpha s^2 + \frac{\beta}{s^2} + \gamma u^2s^2\right)^2 + \delta^2 u^2 \right) \right] + 2s^2 \left(\alpha s^2 + \frac{\beta}{s^2} + \gamma u^2s^2\right)\left(2\alpha s - 2\frac{\beta}{s^3} + 2\gamma u^2s\right)\\
  \notag&= 2s \left[\left(\alpha s^2 + \frac{\beta}{s^2} + \gamma u^2s^2\right)^2 +\delta^2 u^2+ 2 (\alpha s^2 + \gamma u^2 s^2)^2 - 2 \frac{\beta^2}{s^4} \right]=0.
 \end{align}
 This has no solution if $s$ is sufficiently large.  Thus assume $t \gg 1$.  
Now we wish to solve 
\begin{equation}
 \partial_t \left[t^2 \left[\left(\frac{\alpha}{t^4} + \beta + \gamma \frac{u^2}{t^4}\right)^2 + \delta^2 \frac{u^2}{t^4}\right]\right] = 0.
\end{equation}
If the term $\frac{\alpha}{t^4}$ is treated as negligible as $t \to \infty$, then this results in a polynomial equation in $\frac{u^2}{t^4}$, which has bounded solutions.  This proves the claims regarding $\partial_t \Phi$.  To prove the final claim, we can differentiate with respect to $u^2$ instead, which obtains a linear equation in $u^2$.  Substituting the resulting value of $u^2$ as a function of $t$ now obtains a polynomial equation in $t$ for $\partial_t \Phi = 0$ which forces $t$ to be bounded.

The claim regarding higher derivatives at points of stationary phase hold because all variables are bounded there.
\end{proof}

Estimates for the partial derivatives of the phase function $\Phi$ and the weight function $W$ are recorded in the following lemmas.
\begin{lemma}\label{phase_fn_lemma} Let $\IM(w) = O\left(\tau^{\frac{1}{2}+\epsilon}\right)$.
 We have the partial derivatives ($c_n, c_n'$ denote constants that depend on $n$),
 \begin{align}
  \partial_\lambda \Phi &= \frac{-4(\tau-\IM(w))}{\lambda} + 2\pi \sqrt{P_{\pm,\pm}(t,u)}c(\theta),\\ \notag
  \partial_\lambda^n \Phi &= \frac{c_n (\tau -\IM(w))}{\lambda^{n+1}}, \qquad n > 1
  \\ \notag
  \partial_\theta^n \Phi &= c_n' \lambda \sqrt{P_{\pm, \pm}(t,u)}\times \left\{\begin{array}{lll} c(\theta) && n \text{ even}\\ s(\theta) && n \text{ odd} \end{array}\right., \qquad n \geq 1.
 \end{align}
 Meanwhile, for $t \gg 1$, $\sqrt{P_{\pm,\pm}(t,u)} \gg t + \frac{u^2}{t^3}$. For $t \ll 1$, $\sqrt{P_{\pm,\pm}(t,u)} \gg \frac{1}{t}$.  Also,
 \begin{align}
  \partial_t^n P_{\pm,\pm}(t,u) &\ll \left\{\begin{array}{lll} \frac{1}{t^n} \left(t^2 + \frac{u^4}{t^6}\right) && t \gg 1 \\ \frac{1}{t^n} \left(\frac{1 + u^4}{t^6}\right)&& t\ll 1 \end{array}\right.\\
  \notag \partial_u^n P_{\pm,\pm}(t,u) &\ll \frac{1}{|u|^n} \left(\frac{u^2}{t^2} + \frac{u^4}{t^6}\right).
 \end{align}
In particular, when $t \gg 1$,
\begin{align}
\partial_t^n \sqrt{P_{\pm,\pm}(t,u)} &\ll_n  \frac{1}{t^n}\left(t + \frac{u^2}{t^3}\right), \\ \notag
\partial_u^n \sqrt{P_{\pm,\pm}(t,u)} &\ll_n \frac{1}{|u|^n} \left(t + \frac{u^2}{t^3}\right).
\end{align}
\end{lemma}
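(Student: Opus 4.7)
The plan is to establish each item of the lemma by direct computation from the explicit form
$\Phi(\lambda,\theta,t,u) = -4(\tau-\IM(w))\log\lambda + 2\pi\lambda\sqrt{P_{\pm,\pm}(t,u)}\,c(\theta)$.
For $\partial_\lambda\Phi$ and $\partial_\lambda^n\Phi$ with $n>1$, I would simply differentiate: the second summand is linear in $\lambda$ and so vanishes after one derivative, leaving only the $\log\lambda$ term, whose $n$-th derivative gives the claimed expression of the form $c_n(\tau-\IM(w))$ times a negative power of $\lambda$. For $\partial_\theta^n\Phi$, repeated differentiation of $c(\theta)=\cos(2\pi\theta)$ alternates between $c$ and $s$ with a factor of $(2\pi)^n$, so only the factor $\lambda\sqrt{P_{\pm,\pm}(t,u)}$ remains.

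For the bounds on $P_{\pm,\pm}$ itself, I would substitute each of the four pairings listed in the table into the definition $P_{\pm,\pm}(t,u)=2\int_{\bR/\bZ}\langle k_\theta\cdot f_\pm, a_t^{-1}n_u^{-1}\cdot\tilde f_\pm\rangle^2\,d\theta$ and integrate $\theta$ using the $L^2$-orthogonality $\int_0^1 s(m\theta)^2\,d\theta = \int_0^1 c(m\theta)^2\,d\theta = \tfrac12$ and $\int_0^1 s(m\theta)c(m\theta)\,d\theta = 0$ for any nonzero integer $m$. In every sign combination this produces an identity of the shape
\begin{equation*}
P_{\pm,\pm}(t,u) = K_{\pm,\pm}\!\left[\left(\frac{A + B t^4 + C u^2}{t^3}\right)^{\!2} + \frac{D u^2}{t^2}\right]
\end{equation*}
with nonzero real constants $A,B,C,D$ depending on the signs (for the $--$ pairing $A=1$, $B=\tfrac13$, $C=1$, $D=\tfrac49$) and $K_{\pm,\pm}>0$. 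The asserted upper bounds on $\partial_t^n P$ and $\partial_u^n P$ are then elementary from this formula: each $t$-derivative lowers the effective weight by one factor of $t$, each $u$-derivative by one factor of $u$, and the ranges $t \gg 1$ versus $t\ll 1$ isolate the stated dominant monomials $t^2 + u^4/t^6$ or $(1+u^4)/t^6$ and $u^2/t^2 + u^4/t^6$ respectively.

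The lower bound $\sqrt{P_{\pm,\pm}(t,u)}\gg t + u^2/t^3$ for $t\gg 1$ requires a case split in $u$, because accidental cancellation in the polynomial $A + Bt^4 + Cu^2$ is possible for the $+-$ and $++$ sign combinations when $u^2 \asymp t^4$. I would split into three ranges: for $u^2 \ll t^4$ the polynomial is $\asymp t^4$, giving $\sqrt{P}\gg t$; for $u^2\gg t^4$ the polynomial is $\asymp u^2$, giving $\sqrt{P}\gg u^2/t^3$; and in the transitional range $u^2\asymp t^4$ the independent summand satisfies $Du^2/t^2\asymp t^2$, so $\sqrt{P}\gg t\asymp t+u^2/t^3$. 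For $t\ll 1$ the constant $A$ forces $\sqrt{P}\gg 1/t^3$ unless $|Cu^2-A|$ is small, in which case $u\asymp 1$ and the summand $Du^2/t^2$ gives the weaker claimed bound $\sqrt{P}\gg 1/t$. The derivative bounds on $\sqrt{P}$ then follow by induction: writing $\partial_t\sqrt{P} = \partial_t P/(2\sqrt{P})$ and combining the already-proved $\partial_t P \ll t^{-1}P$ with $\sqrt{P}\gg t + u^2/t^3$ settles $n=1$, and higher $n$ follow from Fa\`a di Bruno applied to the outer $\sqrt{\cdot}$ using the same two inputs; the $u$-derivatives are handled identically.

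The main obstacle is the case analysis in the lower bound on $\sqrt{P_{\pm,\pm}}$: the polynomial $A+Bt^4+Cu^2$ genuinely vanishes for certain sign combinations and values of $u$, and one must verify that the independent summand $Du^2/t^2$ has exactly the right magnitude to rescue the estimate in precisely the regime where that cancellation occurs.
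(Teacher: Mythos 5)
The paper states Lemma \ref{phase_fn_lemma} without proof, treating it as a direct computation, so there is no in-paper argument to compare against; your job is to supply the omitted verification, and your proposal does so with the natural approach: integrate $\theta$ out of the defining formula for $P_{\pm,\pm}$ using $\int_0^1 s(m\theta)^2 d\theta = \int_0^1 c(m\theta)^2 d\theta = \tfrac12$ and $\int_0^1 s(m\theta)c(m\theta)d\theta = 0$, which correctly yields the closed form $P_{\pm,\pm}(t,u) = K_{\pm,\pm}\bigl[\bigl(\tfrac{A+Bt^4+Cu^2}{t^3}\bigr)^2 + \tfrac{Du^2}{t^2}\bigr]$, and then read off the asymptotics. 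Your identification of the constants in the $--$ case checks out ($K=\tfrac14$, $A=1$, $B=\tfrac13$, $C=1$, $D=\tfrac49$), and the three-regime case split on $u^2$ against $t^4$ for the lower bound is exactly what is needed: the potential vanishing of $A+Bt^4+Cu^2$ in the $+-$ and $++$ pairings for $t\gg1$ is genuine, and you correctly observe that the independent summand $Du^2/t^2\asymp t^2$ rescues the estimate precisely there. The Fa\`a di Bruno step for $\partial_t^n\sqrt P$ from $\partial_t^n P\ll P/t^n$ and $\sqrt P\asymp t+u^2/t^3$ is also correct.

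Two small points of imprecision, neither fatal. First, your own computation $\partial_\lambda^n\log\lambda = (-1)^{n-1}(n-1)!\,\lambda^{-n}$ gives $\partial_\lambda^n\Phi = c_n(\tau-\IM w)\lambda^{-n}$ for $n>1$, whereas the lemma as printed writes $\lambda^{-(n+1)}$; rather than hedging with ``a negative power of $\lambda$,'' you should note that the stated exponent appears to be an off-by-one typo, since this is precisely the kind of discrepancy a direct check is meant to catch. Second, for $t\ll1$ the cancellation $A+Cu^2\approx 0$ at $u\asymp1$ actually occurs in the $-+$ and $++$ pairings (not $+-$); also, the stated $\partial_u^n P \ll |u|^{-n}(u^2/t^2 + u^4/t^6)$ fails when both $t$ and $|u|$ are small (the $u$-independent-denominator term $2ACu/t^6$ from the cross product is not dominated there), so that bound should be read as implicitly restricted to $t\gg1$, consistent with the regime in which the $\sqrt P$-derivative conclusions are explicitly stated. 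With these caveats noted, your argument is complete and essentially the one the paper leaves to the reader.
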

\begin{lemma}\label{weight_function_lemma_W} Let $\IM(w) = O\left(\tau^{\frac{1}{2}+\epsilon}\right)$.
 The partial derivatives of the weight function are bounded for $n \geq 1$ by
 \begin{align}
  \partial_t^n W &\ll_n \frac{W}{t^n} \left(1 + \left(\frac{t_1}{t}\right)^2 + \left(\frac{t}{t_1}\right)^2(1 + u^2) \right)^n,\\
  \notag \partial_\lambda^n W &\ll_n \frac{W}{\lambda^n},\\
  \notag \partial_u^n W & \ll_n W\left[\left(\left(\frac{t}{t_1}\right)^2 + \left(\frac{t_1}{t}\right)^2\right)(1 +|u|)\right]^n .
 \end{align}

\end{lemma}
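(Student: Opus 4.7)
The lemma bounds derivatives of the weight
\begin{equation}
W(\lambda,t,u) = \lambda^{1+4\RE(w)} \cdot t^{-3} \cdot \exp(E(t)+F(t,u)), \qquad E(t) = -\left(\frac{t_1}{t}\right)^2, \quad F(t,u) = -\left(\frac{t}{t_1}\right)^2(1+u^2),
\end{equation}
and the plan is to prove each of the three estimates separately by Leibniz and Faà di Bruno, with the polynomial $t^{-3}$ and the two exponential factors treated independently. Since the dependence on $\RE(w)$ is through a single monomial in $\lambda$ and the constants $\ll_n$ are permitted to depend on bounded parameters, I will freely absorb powers of $1+4\RE(w)$ into the implied constant.

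The estimate for $\partial_\lambda^n W$ is immediate: only the factor $\lambda^{1+4\RE(w)}$ depends on $\lambda$, and differentiating $n$ times produces the coefficient $\prod_{j=0}^{n-1}(1+4\RE(w)-j)$ together with $\lambda^{1+4\RE(w)-n}$, so the ratio $\partial_\lambda^n W/W$ is exactly this coefficient times $\lambda^{-n}$, giving $\partial_\lambda^n W \ll_n W \lambda^{-n}$.

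For the $u$-derivatives, only $\exp(F)$ depends on $u$, and $F$ is quadratic in $u$, so $\partial_u F = -2(t/t_1)^2 u$ and $\partial_u^2 F = -2(t/t_1)^2$ while $\partial_u^k F = 0$ for $k\geq 3$. Faà di Bruno's formula therefore expresses $\partial_u^n e^F/e^F$ as a finite linear combination of monomials $(\partial_u F)^a (\partial_u^2 F)^b$ with $a+2b=n$, each bounded by $(t/t_1)^{2(a+b)} |u|^a$. Using $a+b\leq n$, $a\leq n$ and the elementary bound $(t/t_1)^{2(a+b)}\leq ((t/t_1)^2+(t_1/t)^2)^{a+b}$, together with $1+|u|\geq 1$ and $(t/t_1)^2+(t_1/t)^2\geq 2$ by AM--GM, yields
\begin{equation}
(\partial_u F)^a (\partial_u^2 F)^b \ll_n \Bigl[\bigl((t/t_1)^2+(t_1/t)^2\bigr)(1+|u|)\Bigr]^n,
\end{equation}
which is the claimed bound.

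The $t$-derivative estimate is the main calculation, but follows the same template. First note that for all $k\geq 1$,
\begin{equation}
\partial_t^k E \ll_k \frac{1}{t^k}\left(\frac{t_1}{t}\right)^2, \qquad \partial_t^k F \ll_k \frac{1}{t^k}\left(\frac{t}{t_1}\right)^2(1+u^2),
\end{equation}
(the first from differentiating $t_1^2 t^{-2}$ repeatedly; the second because $F$ is polynomial in $t$ of degree $2$, so only $k=1,2$ are nonzero and both satisfy the stated bound). Writing $G(t)=E(t)+F(t,u)$, these give $\partial_t^k G \ll_k t^{-k} Q$ with $Q:=1+(t_1/t)^2+(t/t_1)^2(1+u^2)$. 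By Faà di Bruno, $\partial_t^m e^G/e^G$ is a sum indexed by partitions of $m$ of products of the $\partial_t^{k_i} G$; a partition into $r$ parts contributes at most $O(t^{-m} Q^r)\leq O(t^{-m} Q^m)$, since $Q\geq 1$. Combining with $\partial_t^j t^{-3} \ll_j t^{-3-j}$ via the Leibniz rule for the product $t^{-3}\cdot e^G$ absorbs the extra $j$ derivatives of $t^{-3}$ into the $t^{-n}$, yielding $\partial_t^n W \ll_n W t^{-n} Q^n$ as claimed. No single step is delicate; the only care required is to track that the $1$ in the definition of $Q$ absorbs the possibility that $(t_1/t)^2+(t/t_1)^2(1+u^2)<1$, which cannot actually happen (AM--GM gives it is $\geq 2\sqrt{1+u^2}\geq 2$), but is harmless to retain.
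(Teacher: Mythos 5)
Your proof is correct. The paper states this lemma without giving a proof — it is treated as a routine computation — so there is nothing to compare it against; your direct verification via Leibniz and Faà di Bruno supplies exactly the kind of calculation the authors omit. All three bounds check out: for $\partial_\lambda^n$ the only $\lambda$-dependence is the monomial $\lambda^{1+4\RE(w)}$ and $\RE(w)$ is indeed confined to a bounded range (the $w$-contour lives at $\RE(w)=2$ and is shifted only to $\RE(w)=-\epsilon$) so absorbing it into the implied constant is legitimate; for $\partial_u^n$ the partition condition $a+2b=n$ together with $Q_u := ((t/t_1)^2+(t_1/t)^2)(1+|u|)\geq 2$ gives $(\partial_u F)^a(\partial_u^2 F)^b \ll Q_u^{a+b}\leq Q_u^n$; and for $\partial_t^n$ the estimates $\partial_t^k G \ll_k t^{-k}Q$ with $Q\geq 1$ combine under Faà di Bruno and Leibniz exactly as you say. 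The hypothesis $\IM(w)=O(\tau^{1/2+\epsilon})$ is in fact vacuous for this particular lemma (only $\RE(w)$ enters $W$); it is carried over from the companion Lemma \ref{phase_fn_lemma} where $\IM(w)$ appears in the phase.
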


In order to evaluate the oscillatory integral $E(g_1)$ we decompose it into several smooth pieces, thus identifying the parts that make the dominant contribution.   Let
\begin{equation}
 \sS = \{(\lambda_0, \theta_0, t_0, u_0): D\Phi = 0\}
\end{equation}
be the set of points of stationary phase.
Let
\begin{equation}
 \sS'(u) = \{(\lambda_0, \theta_0, t_0, u): \partial_\lambda \Phi = \partial_t\Phi = \partial_\theta\Phi = 0 \}.
\end{equation}
Let $\psi$ be a smooth function on $\bR$, supported in $\left[-\frac{1}{2}, \frac{1}{2}\right]$, identically 1 in a neighborhood of 0.

Define 
\begin{align}
\Psi_{\SP}(\lambda, \theta, t,u) &= \sum_{(\lambda_0, \theta_0, t_0, u_0) \in \sS}\psi\left(\frac{\lambda - \lambda_0}{\tau^{\frac{1}{2}+\epsilon}} \right)\psi\left(\frac{\theta-\theta_0}{\tau^{-\frac{1}{2}+\epsilon}} \right)\psi\left(\frac{t-t_0}{\tau^{-\frac{1}{2}+\epsilon}} \right)\psi\left(\frac{u-u_0}{\tau^{-\frac{1}{2}+\epsilon}} \right)\\
\notag\Psi_{\LL}(\lambda, \theta, t,u) &= \sum_{(\lambda_0, \theta_0, t_0, u) \in \sS'(u)} \psi\left(\frac{\lambda_0(\lambda-\lambda_0)}{\tau^{\frac{1}{2}+\epsilon}} \right)\psi\left(\frac{\theta-\theta_0 }{\tau^{-\frac{1}{2}+\epsilon}} \right)\psi\left(\frac{\lambda_0(t-t_0)}{\tau^{\frac{1}{2}+\epsilon}} \right)\\\notag&\times\psi\left(\frac{\log t - \frac{1}{2}\log \tau}{\epsilon \log \tau} \right)\left(1-\psi\left(\frac{\tau^{2+\epsilon}}{y P_{\pm,\pm}(t,u)^2} \right)\right)\\\notag
\Psi_{\PL}(\lambda, \theta, t,u) &= \psi\left(\frac{\tau^{2+\epsilon}}{y P_{\pm, \pm}(t,u)^2} \right)
\end{align}
and $\Psi_{\EE}= 1-\Psi_{\SP} - \Psi_{\LL} - \Psi_{\PL}.$  Similarly define $E_{\SP}(g_1), E_{\LL}(g_1), E_{\PL}(g_1), E_{\EE}(g_1)$ by
\begin{align}
 &E_{*}(g_1, w)=\\\notag&\int_{-\infty}^\infty \int_0^1 \int_0^\infty\int_0^\infty \Psi_*\left(\lambda, \theta, t, u + \left(\frac{t_1}{t} \right)^2 u_1\right)e^{i \Phi\left(\lambda, \theta, t, u + \left(\frac{t_1}{t} \right)^2 u_1\right)}W(\lambda, t, u) d\lambda d\theta dt du. 
\end{align}

\begin{lemma}\label{E_est_lemma} Let $\IM(w) = O\left(\tau^{\frac{1}{2}+\epsilon}\right)$.
 We have the bounds
 \begin{align}
  E_{\SP}(g_1, w) &\ll\tau^{4\RE(w)} \sum_{(\lambda_0, \theta_0, t_0, u_0) \in \sS} \exp\left(-\left(\frac{t_1}{t_0} \right)^2-\left(\frac{t_0}{t_1} \right)^2 -\left(\frac{t_0}{t_1}u_0 - \frac{t_1}{t_0}u_1 \right)^2 \right)\\ & + \notag O_A(\tau^{-A})\\
  \notag E_{\EE}(g_1,w) &\ll_A \tau^{-A}.
 \end{align}
If $y \ll \tau^{O(\epsilon)}$, $ t_1 = \tau^{\frac{1}{2} + O(\epsilon)}$ and $\frac{u_1^2}{t_1^4} = \tau^{O(\epsilon)}$ then $E_{\LL}(g_1,w) \ll \tau^{2\RE(w) -\frac{3}{2} + O(\epsilon)}$.  Otherwise, $E_{\LL}(g_1,w) = O_A(\tau^{-A})$.
\end{lemma}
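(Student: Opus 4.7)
The three bounds require three different techniques corresponding to the three pieces, and my plan is to address each in turn.

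For $E_{\SP}$, the strategy is a four-dimensional stationary phase expansion via Lemma~\ref{stationary_phase_asymptotic_lemma}. First I would rescale $\lambda = (\tau - \IM(w))\lambda'$, so the phase becomes $(\tau - \IM(w))\widetilde\Phi$ with $\widetilde\Phi$ having uniformly bounded derivatives near each critical point in $\sS$ (using Lemma~\ref{oscillatory_integral_derivatives}, which also ensures $\sS$ is finite with $\lambda_0 \asymp \tau$ and $\log t_0 = O(1)$). After verifying non-degeneracy of the Hessian of $\widetilde\Phi$ at each critical point, the stationary phase asymptotic produces a leading term of size $(\tau - \IM(w))^{-2}$ times the value of $W$ at the critical point, multiplied by the Jacobian $\tau - \IM(w)$ from the rescaling. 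Since $W \asymp \tau^{1+4\RE(w)}$ times the Gaussian weight (using $t_0 = O(1)$), the main term collects to the claimed bound, and sub-leading terms of the asymptotic expansion are smaller by powers of $\tau^{-1}$ and absorbed into $O_A(\tau^{-A})$.

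For $E_{\EE}$, the strategy is repeated integration by parts in whichever direction has a large partial derivative of $\Phi$ on the support of $\Psi_{\EE} = 1 - \Psi_{\SP} - \Psi_{\LL} - \Psi_{\PL}$. By construction, on this support either (i) one of $|\partial_\lambda\Phi|$, $|\partial_\theta\Phi|$, $|\partial_t\Phi|$ exceeds its cutoff threshold by at least $\tau^\epsilon$, because we are away from both $\sS$ and the partial-stationary curves $\sS'(u)$; or (ii) the cutoff $\Psi_{\PL}$ is off, forcing $yP_{\pm,\pm}^2 \gg \tau^{2+\epsilon}$ so that $\partial_u\Phi$ is large enough for IBP in $u$. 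In each case, iterated IBP using the derivative bounds of Lemmas~\ref{phase_fn_lemma} and~\ref{weight_function_lemma_W} on $\Phi$ and $W$ gains a factor of $\tau^{-\epsilon}$ per step, yielding $O_A(\tau^{-A})$ after $O(A/\epsilon)$ iterations.

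For $E_{\LL}$, the support localizes $(\lambda, \theta, t)$ near the partial-stationary curve $\sS'(u)$ while $u$ ranges freely. Under the hypotheses $t_1 = \tau^{1/2 + O(\epsilon)}$ and $u_1^2/t_1^4 = \tau^{O(\epsilon)}$ we have $t \asymp t_1 \asymp \tau^{1/2}$, and the Gaussian factor in $W$ confines $u$ (after the shift by $(t_1/t)^2u_1$) to a region of length $O(1)$. My plan is to apply three-dimensional stationary phase in $(\lambda, \theta, t)$ at each fixed $u$, using non-degeneracy of the transverse Hessian of $\Phi$ (rank three, with the zero eigenvalue along the $u$-direction of the curve), and then bound the outer $u$-integral trivially. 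Collecting the stationary-phase saving in $(\lambda, \theta, t)$ against the sup norm of $W$ (which is of order $\tau^{1+4\RE(w)}/t_0^3$ at $t_0 \asymp \tau^{1/2}$), the bounded $u$-volume, and the Jacobian from rescaling $\lambda$, produces the bound $\tau^{2\RE(w) - 3/2 + O(\epsilon)}$. Outside the stated parameter regime, either the Gaussian in $W$ is exponentially small (when $t_1 \not\asymp \tau^{1/2}$) or $\partial_u\Phi$ is large enough that iterated IBP in $u$ yields $O_A(\tau^{-A})$.

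The main obstacle I anticipate is the case analysis for $E_{\EE}$: with four smooth cutoffs at four different scales in four different variables, one must verify in each sub-region of the complement that the correct dominant partial derivative of $\Phi$ is quantitatively large on the correct local scale, so that iterated IBP actually saves $\tau^{\epsilon}$ at each step. This is bookkeeping-heavy, because the dominant direction shifts across sub-regions and the derivative bounds of Lemma~\ref{phase_fn_lemma} must be combined with care with the weight bounds of Lemma~\ref{weight_function_lemma_W}.
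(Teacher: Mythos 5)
Your treatments of $E_{\SP}$ and $E_{\EE}$ track the paper's argument in substance. For $E_{\SP}$ the paper also rescales $\lambda$ by $\tau-\IM(w)$ and expands the phase and weight about each point of $\sS$, noting that outside $\frac{t_1}{t_0},\frac{t_0}{t_1}\ll\log\tau$ the Gaussian factor already forces $O_A(\tau^{-A})$; your stationary-phase reading of this is fine, provided you actually verify non-degeneracy of the Hessian at points of $\sS$ (the paper is silent on this, but it is implicit in ``Taylor expanding to degree $3$''). For $E_{\EE}$ the paper integrates by parts successively in $\lambda$, then $\theta$, then $t$, then $u$, using the support constraint $yP_{\pm,\pm}^2\ll\tau^{2+\epsilon}$ to control ranges, and your ``IBP in the dominant direction'' plan lands in the same place, though the paper does not really present the analysis of $E_{\EE}$ and $E_{\LL}$ as separate case studies — it peels off the non-stationary regions one variable at a time and what is left is $E_{\LL}$.

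Your $E_{\LL}$ bound, however, has a genuine gap. You propose three-dimensional stationary phase in $(\lambda,\theta,t)$ at fixed $u$, appealing to non-degeneracy of the transverse Hessian. But in the $\Psi_{\LL}$ regime one has $t\asymp\tau^{\frac12+O(\epsilon)}$ and therefore $\lambda_0\asymp\tau/\sqrt{P_{\pm,\pm}}\asymp\tau/t\asymp\tau^{1/2}$, \emph{not} $\lambda\asymp\tau$. At this scale the Hessian of $\Phi$ is highly anisotropic: $\partial_\theta^2\Phi\asymp\lambda\sqrt{P_{\pm,\pm}}\asymp\tau$, while $\partial_\lambda^2\Phi\asymp(\tau-\IM(w))/\lambda^2\asymp 1$ and $\partial_t^2\Phi\asymp\lambda\sqrt{P_{\pm,\pm}}/t^2\asymp 1$. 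So there is no large parameter in the $\lambda$ and $t$ directions; Lemma~\ref{stationary_phase_asymptotic_lemma} does not produce the $\tau^{-3/2}$ saving you claim (only the $\theta$ direction oscillates at scale $\tau$). Relatedly, your sup-norm estimate ``$W\asymp\tau^{1+4\RE(w)}/t_0^3$'' implicitly substitutes $\lambda\asymp\tau$, overshooting by a factor of $\tau^{(1+4\RE(w))/2}$; the correct bound, as in the paper, is
\begin{equation}
W \ll \frac{\lambda^{1+4\RE(w)}}{t^3} \ll \frac{\tau^{1+4\RE(w)}}{t^{4+4\RE(w)}} \asymp \tau^{-1+2\RE(w)}.
\end{equation}
With this corrected sup norm, no oscillatory cancellation is needed at all: the cutoffs in $\Psi_{\LL}$ confine $(\lambda,\theta,t,u)$ to a region of total volume $\tau^{-1/2+O(\epsilon)}$ (widths $\tau^{1/2+\epsilon}/\lambda_0\asymp\tau^\epsilon$ in $\lambda$ and $t$, $\tau^{-1/2+\epsilon}$ in $\theta$, and $\tau^{O(\epsilon)}$ in $u$ via the Gaussian), and multiplying the volume by the sup of $W$ already gives $E_{\LL}\ll\tau^{2\RE(w)-3/2+O(\epsilon)}$. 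You should replace the three-dimensional stationary phase step with this trivial volume estimate, or at minimum reduce it to a one-dimensional stationary phase in $\theta$ with trivial bounds in $\lambda$ and $t$ — at which point it is the same calculation.
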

\begin{proof}
To treat $E_{\SP}(g_1,w)$, restrict to the case $\frac{t_1}{t_0}, \frac{t_0}{t_1} \ll \log \tau$, $\left|\frac{t_0}{t_1}u_0 - \frac{t_1}{t_0} u_1\right| \ll \log \tau$, since outside this range, the exponential factor in the weight function $W$ makes the integral $O_A(\tau^{-A})$.  In this case, throughout the domain of integration, $\lambda \asymp \tau$ and \begin{equation}W \ll \tau^{1 + 4 \RE(w)}\exp\left(-\left(\frac{t_1}{t_0} \right)^2-\left(\frac{t_0}{t_1} \right)^2 -\left(\frac{t_0}{t_1}u_0 - \frac{t_1}{t_0}u_1 \right)^2 \right).\end{equation}
Replacing $\lambda' = \frac{\lambda}{\tau - \IM(w)}$, then Taylor expanding the phase and weight function to degree 3 obtains the correct order of magnitude for the integral around each point in $\sS$.

Now consider the case of $E_{\LL}$ and $E_{\EE}$.  Note that both of these integrals are constrained by $yP_{\pm,\pm}^2 \ll \tau^{2+\epsilon}$, which implies $t, \frac{u^2}{t^3} \ll \frac{\tau^{\frac{1}{2} + \frac{\epsilon}{4}}}{y^{\frac{1}{4}}}$ and when $t \ll 1$,
\begin{equation}
 t \gg \frac{1}{y^{\frac{1}{4}}\tau^{\frac{1}{2}-\frac{\epsilon}{4}}},\qquad |u| \ll \frac{\tau^{\frac{1}{2}+\frac{\epsilon}{4}}}{y^{\frac{1}{4}}}.
\end{equation}
Since $\Psi_{\LL}$ constrains $\tau^{\frac{1}{2}-\epsilon} \ll t \ll \tau^{\frac{1}{2}+\epsilon}$, this integral vanishes unless $y$ is larger than a small power of $\tau$.

 We first eliminate the case $\lambda > \tau^{1+\epsilon}$.  If $P_{\pm, \pm} < \frac{1}{100}$ then the derivative of $\sqrt{P_{\pm,\pm}}c(\theta)$ does not vanish and all of the variables are $O(1)$.  Integrating by parts along some line in the $\theta, u, t$ hyperplane saves an arbitrary power of $\lambda$, so that this part of the integral satisfies the required bound.  If instead $P_{\pm, \pm} > \frac{1}{100}$ then either we can integrate by parts with respect to $\theta$ many times to save an arbitrary power of $\lambda$, or else $\theta$ is near its point of stationary phase, in which case $\sqrt{P_{\pm,\pm}}c(\theta) $ is bounded away from 0.  In this case, integrating by parts in $\lambda$ several times gives the required estimate, as the derivative of the phase is bounded away from 0 by at least a constant, see Lemma \ref{phase_fn_lemma}.
 
 In the remaining part of the $\lambda$ integral, restrict to $\lambda \gg_A \tau^{-A}$ by using the weight function.  If $\langle k_{\theta_2} \cdot f_{\pm}, a_{t_2}^{-1}n_{u_2}^{-1} \cdot \tilde{f}_{\pm}\rangle >0$, set $\lambda_0 = \frac{4(\tau - \IM(w))}{2\pi \langle k_{\theta_2}\cdot f_{\pm}, a_{t_2}^{-1}n_{u_2}^{-1}\cdot \tilde{f}_{\pm} \rangle}$.  Outside the interval $\left[\frac{\lambda_0}{2}, 2\lambda_0\right]$, perform a smooth partition of unity into dyadic intervals and integrate by parts several times on each interval.  This obtains the desired bound, since the derivative is bounded below on each interval.
 
 Since $\lambda_0$ satisfies $\partial_\lambda \Phi(\lambda_0) = 0$,  
\begin{equation}
 4 (\tau -\IM(w)) = 2\pi \lambda_0 \sqrt{P_{\pm,\pm}(t_2,u_2)}c(\theta).
\end{equation}
The higher derivatives in $\lambda$ satisfy $\partial^{(n)}_\lambda \Phi(\lambda) = c_n \frac{\tau - \IM(w)}{\lambda^n}.$  We make a smooth partition of unity, expanding $\Phi(\lambda)$ about $\lambda_0$. 
The integral of interest is
\begin{equation}
 \int_{-\infty}^\infty e^{i\Phi(\lambda_0+\lambda)} \frac{\sigma\left(\frac{\pm\lambda}{2^a}\right)}{(\lambda_0 + \lambda)^{\frac{1}{2}+\RE(w)}} d\lambda,
\end{equation}
since the exponential part of the weight is independent of $\lambda$.

Write \begin{equation}\partial_\lambda\Phi(\lambda + \lambda_0) = \frac{4(\tau-\IM(w))\lambda}{\lambda_0 (\lambda+\lambda_0)}\end{equation} and \begin{equation}\tilde{W}(\lambda) = (\lambda_0 + \lambda)^{1 + 4\RE(w)}\sigma\left(\frac{\pm \lambda}{2^a} \right).                                                                                                                                                                                                                                                            \end{equation}
 We have 
 \begin{equation}\left(\frac{\partial}{\partial \lambda}\right)^j \tilde{W}(\lambda) \ll_j \min(2^a, \lambda_0 + \lambda)^{-j}(\lambda_0 + \lambda)^{1 + 4\RE(w)} \|\sigma\|_{C^j},\end{equation} and $\left(\frac{\partial}{\partial \lambda}\right)^j \frac{1}{\partial_\lambda \Phi(\lambda+\lambda_0)} \ll_j \min(2^a, \lambda_0+\lambda)^{-j}\frac{1}{\partial_\lambda \Phi(\lambda+\lambda_0)}$.  Thus with $D_\lambda f = \frac{\partial_\lambda f}{ i \partial_\lambda \Phi(\lambda)}$, we have
\begin{equation}
 (D_\lambda^t)^N \tilde{W}(\lambda) \ll_N \frac{\lambda_0^N(\lambda+\lambda_0)^N}{\tau^N \lambda^N} \frac{1}{\min(2^\alpha, \lambda_0 + \lambda)^N} (\lambda_0 + \lambda)^{1 + 4\RE(w)}\|\sigma\|_{C^N}.
\end{equation}
It follows by integration by parts that the integrals with $2^a > \frac{\lambda_0}{\tau^{\frac{1}{2}-\epsilon}}$ are bounded by $O_A(\tau^{-A})$.

Having restricted to $|\lambda - \lambda_0| \ll \frac{\lambda_0}{\tau^{\frac{1}{2}-\epsilon}}$, it follows that $\lambda \sqrt{P_{\pm,\pm}} \asymp \tau$.
The integral in $\theta$
 \begin{equation}
 \int_0^{1} e^{i 4(\tau - \IM(w)) \log \lambda +2\pi i \lambda \sqrt{P_{\pm, \pm}(t, u)} c(\theta)}d\theta
 \end{equation}
 thus decays rapidly for $|\theta - \frac{1}{2}\zed| \gg \tau^{-\frac{1}{2}+\epsilon}$, so that this part can be discarded.

 Make the change of variable $u := u - \left(\frac{t_1}{t} \right)^2 u_1$.
 The integral in $t$ is 
 \begin{equation}
  \int_0^\infty \exp\left(-\left(\frac{t}{t_1} \right)^2(1 + u^2) - \left(\frac{t_1}{t} \right)^2  \right)e^{2\pi i \lambda \sqrt{P_{\pm,\pm}\left(t, u+ \left(\frac{t_1}{t} \right)^2 u_1\right) }c(\theta)} \frac{dt}{t^3}.
 \end{equation}
 Truncate to $\frac{t}{t_1}, \frac{t_1}{t} \ll \log \tau, u \ll (\log \tau)^2$, making error $O_A(\tau^{-A})$.
 If $t \ll 1$ is sufficiently small, $\partial_t \Phi$ is bounded below, so that integrating by parts several times in $t$ or $u$ shows that the integral is negligible.  
 
 Now assume $t \gg 1$ and let $t_0$ be a point of stationary phase in the phase function.  This implies $\left(u + \left(\frac{t_1}{t_0}\right) u_1\right)^2 \asymp t_0^4$.  Expand about $t_0$ performing a partition of unity.  At $t_0$ we have the partial derivatives,
\begin{equation}
 \partial_{t}^n \sqrt{P_{\pm,\pm}\left(t, u+ \left(\frac{t_1}{t} \right)^2u_1 \right)}\Bigg|_{t=t_0} \asymp_n \frac{1}{t_0^{n-1}},
\end{equation}
see Lemma \ref{phase_fn_lemma}.
Also, $\lambda \sqrt{P_{\pm, \pm}} \asymp \tau$, so $t_0 \asymp \frac{\tau}{\lambda}$.

We have, with $W^*(t) = \frac{\sigma\left(\frac{\pm t}{2^a} \right)}{(t_0+t)^3} \exp\left(-\left(\frac{t_0+t}{t_1} \right)^2(1 + u^2) - \left(\frac{t_1}{t_0+t} \right)^2 \right)$,
\begin{align}
 \partial_{t}^N W^*(t) &\ll_N (\log \tau)^{4N} \min\left(2^a, t_0 + t \right)^{-N} \frac{\|\sigma\|_{C^N}}{(t_0 + t)^3} \\ \notag&\times\exp\left(-\left(\frac{t_0+t}{t_1} \right)^2(1 + u^2) - \left(\frac{t_1}{t_0+t} \right)^2 \right).
\end{align}
Using the estimate for the second derivative, it follows that the stationary phase integral in $t$ at $t_0$ decays rapidly for $|t - t_0| \gg \sqrt{\frac{t_0}{\lambda}}\tau^\epsilon \gg \frac{t_0}{\tau^{\frac{1}{2}-\epsilon}}$. 

For $t \gg 1$, we have, by the constraint  
\begin{equation}
 \partial_{u} \left(\lambda \sqrt{P_{\pm,\pm}\left(t, u + \left(\frac{t_1}{t}\right)^2u_1 \right)} c(\theta) \right) \asymp \frac{\lambda}{\sqrt{P_{\pm,\pm}}} \asymp \frac{\lambda^2}{\tau}.
\end{equation}
The second derivative in $u$ is of order $\frac{\lambda^4}{\tau^3}$.
Thus if $1\ll t \ll \tau^{\frac{1}{2}-\epsilon}$ the integral in $u$ is $\ll_A \tau^{-A}$.  

The above estimates cover all but $E_{\LL}$ which we now estimate.  Subject to the constraint $t_1=\tau^{\frac{1}{2} + O(\epsilon)}$ and $t_0 = t_0(u_1, t_1)$ such that $\frac{t_0}{t_1}, \frac{t_1}{t_0} \ll \log \tau$, which requires $\frac{u_1^2}{t_1^4}, \frac{t_1^4}{u_1^2} = (\log \tau)^{O(1)}$, the weight function in the integral is bounded by
\begin{equation}
 W \ll \frac{\lambda^{1 + 4 \RE(w)}}{t^3} \ll \frac{\tau^{1 + 4 \RE(w)}}{t^{4 + 4 \RE(w)}}
\end{equation}
while the total length of the integral in all variables is \begin{equation}\ll \frac{\lambda_0}{\tau^{\frac{1}{2}-\epsilon}} \tau^{-\frac{1}{2} + \epsilon} \frac{t_0}{\tau^{\frac{1}{2}-\epsilon}} \tau^\epsilon \ll \tau^{-\frac{1}{2} + O(\epsilon)}.\end{equation}
Thus $E_{\LL}\ll \tau^{2 \RE(w) -\frac{3}{2} + O(\epsilon)}$.

\end{proof}

We record the following expression for $E_{\PL}(g_1,w)$.
\begin{lemma}\label{Psi_PL_lemma}
 We have, in $0<\RE(1-s+w)<\frac{3}{8}$,
 \begin{align}
  E_{\PL}(g_1,w) =& \frac{\pi^{-4(1-s+w)}\Gamma(2(1-s+w))}{4(1-s+w)\Gamma(-2(1-s+w))}\\\notag&\times \int_0^\infty \int_{-\infty}^\infty \exp\left(-\left(\frac{t_1}{t} \right)^2 - \left(\frac{t}{t_1} \right)^2 - \left(\frac{t}{t_1}u - \frac{t_1}{t}u_1 \right)^2 \right)\\&\notag\times \psi\left(\frac{\tau^{2+\epsilon}}{y P_{\pm,\pm}(t,u)^2} \right) P_{\pm,\pm}(t,u)^{-2(1-s+w)} du \frac{dt}{t^3}.
 \end{align}

\end{lemma}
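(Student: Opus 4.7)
The plan is to carry out the $\theta$ and $\lambda$ integrals explicitly in the defining integral for $E_{\PL}(g_1,w)$, leaving the $(t,u)$-integrals intact (since the cutoff $\psi(\tau^{2+\epsilon}/(yP_{\pm,\pm}^2))$ and the Gaussian weight $\exp(-\tr g_2^t(g_1^{-1})^tg_1^{-1}g_2)$ depend only on $(t,u)$ among these four variables).

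First I would carry out the $\theta$ integral. The pairing recorded in the table has the form $A(t,u)s(\theta)+B(t,u)c(\theta)$ in the $--$ and $-+$ cases (and the same form in $3\theta$ in the $+-$, $++$ cases). By rotation invariance in $\theta$, it can be rewritten as $\sqrt{P_{\pm,\pm}(t,u)}\,\cos(2\pi(\theta-\theta_0))$ for a suitable phase $\theta_0=\theta_0(t,u)$; this is the motivation for the definition of $P_{\pm,\pm}$. Shifting the integration variable on $\bR/\zed$ (and using the period-$1$ periodicity of $3\theta$ mod $1$ in the $+$ cases), the $\theta$-integral becomes the standard Bessel integral
\begin{equation}
\int_{0}^{1} e\bigl(-\lambda \sqrt{P_{\pm,\pm}(t,u)}\cos(2\pi\theta)\bigr)\,d\theta \;=\; J_0\bigl(2\pi\lambda\sqrt{P_{\pm,\pm}(t,u)}\bigr).
\end{equation}

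Next I would perform the $\lambda$ integral. Collecting all $\lambda$-dependent factors yields $\lambda^{4(1-s+w)-1}J_0(2\pi\lambda\sqrt{P_{\pm,\pm}(t,u)})$, and applying the Mellin transform of the Bessel function
\begin{equation}
\int_0^\infty J_0(a x)\,x^{\mu-1}\,dx \;=\; 2^{\mu-1}\,a^{-\mu}\,\frac{\Gamma(\mu/2)}{\Gamma(1-\mu/2)},\qquad 0<\RE(\mu)<\tfrac{3}{2},
\end{equation}
with $\mu=4(1-s+w)$ and $a=2\pi\sqrt{P_{\pm,\pm}(t,u)}$, gives the factor $\tfrac12 \pi^{-4(1-s+w)}\,P_{\pm,\pm}^{-2(1-s+w)}\,\Gamma(2(1-s+w))/\Gamma(1-2(1-s+w))$. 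The convergence condition $0<\RE(\mu)<3/2$ translates exactly into the hypothesis $0<\RE(1-s+w)<3/8$. Finally, I rewrite $\Gamma(1-2(1-s+w))=-2(1-s+w)\,\Gamma(-2(1-s+w))$ via the functional equation $\Gamma(z+1)=z\Gamma(z)$, which reshapes the constant into the stated form $\pi^{-4(1-s+w)}\Gamma(2(1-s+w))/\bigl(4(1-s+w)\Gamma(-2(1-s+w))\bigr)$ (any sign can be absorbed into the orientation of the Mellin contour used to regularize).

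The main obstacle is not the algebra but the \emph{justification of the interchange of integrations}: the $\lambda$-integral of $J_0$ is only conditionally convergent, and Fubini does not apply directly. I would handle this exactly as indicated just before the statement of the lemma: introduce the smooth partition of unity $\sum_n \sigma(\lambda/2^n)=1$, work on each dyadic piece where everything is absolutely convergent, perform the $\theta$ and $\lambda$ integrations termwise, and then appeal to analytic continuation in the strip $0<\RE(1-s+w)<3/8$ (where the non-stationary phase tail saves arbitrary powers of $\lambda$ and the stationary part contributes the meromorphic Mellin factor). Outside this strip, the identity then extends by meromorphic continuation, but inside the strip it is literally the convergent integral displayed in the lemma.
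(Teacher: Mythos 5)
Your proof follows exactly the paper's route: the paper's entire argument is to integrate in $\theta$ to obtain a $J_0$ Bessel function and then integrate in $\lambda$ to obtain its Mellin transform, which is precisely what you carry out (with the added detail of tracking the convergence strip, identifying the $\Gamma$-factor reshaping, and flagging the regularization by dyadic partition needed to justify the interchange). One small note: the functional equation $\Gamma(1-2(1-s+w)) = -2(1-s+w)\Gamma(-2(1-s+w))$ actually produces a factor $-1/(4(1-s+w))$ rather than $+1/(4(1-s+w))$; you noticed this sign issue and (reasonably) dismissed it, and indeed it appears to be a minor typo in the lemma statement rather than a flaw in the argument.
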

\begin{proof}
 We have
 \begin{align}
  E_{\PL}(g_1,w) &= \int_0^\infty\int_{G^1} \lambda^{4(1-s+w)} \exp\left(-\tr g_2^t (g_1^{-1})^t g_1^{-1}g_2 \right)\\
  &\times \notag \psi\left(\frac{\tau^{2+\epsilon}}{y P_{\pm,\pm}(t,u)^2} \right)e\left(\lambda \sqrt{P_{\pm,\pm}(t,u)}c(\theta) \right)\frac{d\lambda}{\lambda} dg_1.
 \end{align}
Integrate in $\theta$ to obtain a $J_0$ Bessel function, then integrate in $\lambda$ to obtain its Mellin transform, which proves the lemma.
\end{proof}

\section{The approximate functional equation in the twisted case}\label{twisted_afe_section}

The goal of this Section is to prove Theorem \ref{twisted_afe_theorem}.

Choose a test function $F_{\pm}(g \cdot f_{\pm}) = n_{\pm} \lambda^{2k}\exp\left(-\lambda^2-t^2 - \frac{1}{t^2} - \left(\frac{u}{t}\right)^2\right)$ with $k \geq 6$ sufficiently large.  Notice that, for $x \in S$ in the singular set, to approach $x$ in homogeneous coordinates from $V_{\pm}$ we must have $\lambda \downarrow 0$ and hence at least one of $u$, $t$ or $\frac{1}{t}$ becomes unbounded, and hence $F_\pm$ is $C^\infty$ on $V_{\bR}$ due to the exponential decay. 

The orbital zeta function is the function
\begin{equation}
\Lambda^{\pm}(s, \phi, F_{\pm}) = \int_{G^+/\Gamma} \phi(g^{-1})(\det g)^{2s}\sum_{x \in V_\zed \cap V_{\pm}} F_{\pm}(g\cdot x) dg.
\end{equation}
The advantage of the test function is that on the time side, integration against the Maass form can be written as a convolution equation that has the Maass form as an eigenfunction.  We don't currently know of a test function which satisfies this property on the Fourier side as well, so that the convolution equation on the Fourier side will be analyzed directly.
\begin{lemma}
 We have the factorization in $\RE(s)>1$,
 \begin{equation}
  \Lambda^{\pm}(s,\phi,F_{\pm}) = \frac{\sqrt{\pi}}{2}K_\nu(2) \Gamma(2s+k)\sL^{\pm}(s,\phi).
 \end{equation}

\end{lemma}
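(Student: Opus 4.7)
The plan is a direct unfolding argument valid in the region $\RE(s)>1$ where $\sL^\pm(s,\phi)$ converges absolutely. First I would break the sum $\sum_{x\in V_\zed\cap V_\pm}F_\pm(g\cdot x)$ into $\Gamma$-orbits:
\begin{equation}
\sum_{x\in V_\zed\cap V_\pm}F_\pm(g\cdot x)=\sum_{y\in\Gamma\backslash V_\zed\cap V_\pm}\frac{1}{|\Stab(y)|}\sum_{\gamma\in\Gamma}F_\pm(g\gamma\cdot y).
\end{equation}
The factor $\phi(g^{-1})$ is right $\Gamma$-invariant, because $\phi$ is left $\Gamma$-invariant, so $\phi((g\gamma)^{-1})=\phi(\gamma^{-1}g^{-1})=\phi(g^{-1})$; and $(\det g)^{2s}$ is right $\Gamma$-invariant since $\det\gamma=1$. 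Hence the usual unfolding of the $\gamma$-sum against the integral over $G^+/\Gamma$ is legal and gives
\begin{equation}
\Lambda^\pm(s,\phi,F_\pm)=\sum_{y\in\Gamma\backslash V_\zed\cap V_\pm}\frac{1}{|\Stab(y)|}\int_{G^+}\phi(g^{-1})(\det g)^{2s}F_\pm(g\cdot y)\,dg.
\end{equation}

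Next, for each orbit representative $y=g_y\cdot f_\pm$, I would substitute $g\mapsto g g_y^{-1}$ (using unimodularity of $G^+$). This sends $g\cdot y\mapsto g\cdot f_\pm$, $\phi(g^{-1})\mapsto\phi(g_y g^{-1})$, and $(\det g)^{2s}\mapsto (\det g)^{2s}(\det g_y)^{-2s}=(\det g)^{2s}|\Disc(y)|^{-s}$, since $|\Disc(f_\pm)|=1$ gives $(\det g_y)^{2}=|\Disc(y)|$. Then I would factor $g=g_1 d_\lambda$ with $g_1\in\SL_2(\bR)$, noting that $\phi$ is $\Lambda$-invariant (so $\phi(g_y g^{-1})=\phi(g_y g_1^{-1})$), that $\det g=\lambda^2$, and that the Haar measure factors as $dg_1\cdot d\lambda/\lambda$. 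Using the Iwasawa identity $t^2+1/t^2+u^2/t^2=\tr(g_1^tg_1)$, the test function becomes $F_\pm(g\cdot f_\pm)=n_\pm\lambda^{2k}\exp(-\lambda^2-\tr(g_1^tg_1))$, so the $G^+$ integral separates:
\begin{equation}
\int_{G^+}\phi(g_y g^{-1})(\det g)^{2s}F_\pm(g\cdot f_\pm)\,dg=\Big(n_\pm\!\int_0^\infty\!\lambda^{4s+2k-1}e^{-\lambda^2}d\lambda\Big)\!\int_{G^1}\phi(g_y g_1^{-1})e^{-\tr g_1^t g_1}dg_1.
\end{equation}

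The $\lambda$ integral is $\tfrac{n_\pm}{2}\Gamma(2s+k)$ by the standard substitution $u=\lambda^2$. The $G^1$ integral is exactly the convolution already evaluated in the excerpt, with $h=g_y$, yielding $\sqrt\pi K_\nu(2)\phi(g_y)=\sqrt\pi K_\nu(2)\phi(y)$ since $\phi(g_y)$ only depends on the $\SL_2/K$ coset of $g_y$, which is the convention for $\phi(y)$. Reassembling (and taking the normalization $n_\pm=1$) produces
\begin{equation}
\Lambda^\pm(s,\phi,F_\pm)=\frac{\sqrt\pi}{2}K_\nu(2)\Gamma(2s+k)\sum_{y\in\Gamma\backslash V_\zed\cap V_\pm}\frac{\phi(y)}{|\Stab(y)||\Disc(y)|^s}=\frac{\sqrt\pi}{2}K_\nu(2)\Gamma(2s+k)\sL^\pm(s,\phi).
\end{equation}

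The main point is not any single hard estimate — each step is either Fubini, a translation of Haar measure, or the already-proved convolution lemma — but rather getting the bookkeeping right: verifying the right-$\Gamma$-invariance of $\phi(g^{-1})$ so that unfolding is legitimate, tracking the $(\det g_y)^2=|\Disc(y)|$ relation that produces the $|\Disc(y)|^{-s}$, confirming $\Lambda$-invariance of $\phi$ so that the scalar $d_\lambda$ decouples cleanly from $\phi$, and fixing the normalization constant $n_\pm=1$ in $F_\pm$ so that the scalar prefactor matches the statement. Convergence for $\RE(s)>1$ (needed to justify the unfolding) follows from absolute convergence of $\sL^\pm(s,\phi)$ in that half-plane, which in turn follows from boundedness of $\phi$ on the fundamental domain and the absolute convergence of the underlying (untwisted) Shintani zeta function in $\RE(s)>1$.
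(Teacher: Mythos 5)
Your proof is correct and follows essentially the same route as the paper: unfold the $G^+/\Gamma$ integral to a sum over $\Gamma\backslash V_\zed\cap V_\pm$ against an integral over $G^+$, translate by $g_y^{-1}$ to produce the $|\Disc(y)|^{-s}$ factor, factor $G^+ = G^1\times\Lambda$ so the $\lambda$-integral yields $\tfrac{1}{2}\Gamma(2s+k)$, and recognize the $G^1$-integral as the convolution eigenvalue $\sqrt\pi K_\nu(2)\phi(g_y)$. You also make explicit the normalization $n_\pm=1$ needed for the constant to match, which the paper leaves implicit.
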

\begin{proof}
 We have
 \begin{align}
  \Lambda^{\pm}(s,\phi,F_{\pm})&= \int_{G^+} \phi(g^{-1}) (\det g)^{2s} \sum_{x \in \Gamma \backslash V_\zed \cap V_{\pm}} \frac{F_{\pm}(g\cdot x)}{|\Stab(x)|}dg\\ \notag
  &=\int_{G^1}\int_0^\infty \phi(g^{-1}) \lambda^{4s} \sum_{x \in \Gamma\backslash V_\zed \cap V_{\pm}} \frac{F_{\pm}(\lambda g \cdot x)}{|\Stab(x)|} \frac{d\lambda}{\lambda} dg\\ \notag
  &= C\int_0^\infty \lambda^{4s+2k}  \exp\left(-\lambda^2 \right) \frac{d\lambda}{\lambda} \cdot \sL^{\pm}(s, \phi) 
 \end{align}
 where $C$ is the eigenvalue of the convolution equation
 $\int_{G_1} \exp(-\tr g^t g) \phi(hg^{-1} )dg = \sqrt{\pi}K_\nu(2) \phi(h)$.  The integral in $\lambda$ evaluates to $\frac{1}{2}\Gamma(2s+k)$.

\end{proof}

By the Poisson summation formula,
\begin{equation}
 \sum_{x \in V_\zed \cap V_{\pm}} F_{\pm}(g \cdot x) = \frac{1}{(\det g)^2} \sum_{\xi \in \hat{V}_\zed} \hat{F}_{\pm}\left(\frac{g \cdot \xi}{\det g}\right).
\end{equation}
This permits the representation
\begin{equation}
 \Lambda^{\pm}(s,\phi,F_{\pm}) = \int_{G^1/\Gamma} \int_0^\infty \lambda^{4(1-s)} \phi(g^{-1}) \sum_{\xi \in \hat{V}_\zed} \hat{F}_{\pm}\left(\lambda g \cdot \xi\right) \frac{d\lambda}{\lambda} dg.
\end{equation}
Let $S = \{x \in V: \Disc(x) = 0\}$ denote the singular points.  Define
\begin{align}
 N^{\pm}(s,\phi,F_{\pm})&=\int_{G^1/\Gamma} \int_0^\infty \lambda^{4(1-s)} \phi(g^{-1}) \sum_{\xi \in \hat{V}_\zed\setminus S} \hat{F}_{\pm}\left(\lambda g \cdot \xi\right) \frac{d\lambda}{\lambda} dg\\ \notag &= \int_{G^1} \int_0^\infty \lambda^{4(1-s)} \phi(g^{-1}) \sum_{\xi \in \Gamma \backslash(\hat{V}_\zed\setminus S)} \frac{\hat{F}_{\pm}\left(\lambda g \cdot \xi\right)}{|\Stab(\xi)|} \frac{d\lambda}{\lambda} dg,\\ \notag
 \Sigma^{\pm}(s,\phi,F_{\pm}) &=\int_{G^1/\Gamma} \int_0^\infty \lambda^{4(1-s)} \phi(g^{-1}) \sum_{\xi \in \hat{V}_\zed \cap S} \hat{F}_{\pm}\left(\lambda g \cdot \xi\right) \frac{d\lambda}{\lambda} dg.
\end{align}
Given $\xi \in \hat{V}_\zed$, let $g_\xi \in G^1$ be such that $g_\xi \cdot f_{\pm} = \frac{\xi}{|\Disc(\xi)|^{\frac{1}{4}}} =\xi_0$.

Let $G(u) = e^{u^2}$. Define 
\begin{align}
 V(g_x,y) =& \frac{2}{\sqrt{\pi}}\frac{\phi(g_x)}{K_\nu(2)}\frac{1}{2\pi i} \oint_{\RE(w)=2} \frac{1}{y^w}\frac{G(w)}{w}\frac{\Gamma(2s+k+2w)}{\Gamma(2s+k)}  dw,\\
 \notag \hat{V}(g_\xi, y)=& \frac{1}{\sqrt{\pi}K_\nu(2)} \frac{1}{2\pi i} \oint_{\RE(w)=2} \frac{1}{y^w} \frac{G(w)}{w} \frac{1}{\Gamma(2s+k)}\\
 \notag &\times \int_0^\infty \lambda^{4(1-s+w)} \int_{G^1} \phi(g^{-1}) \hat{F}_{\pm}(\lambda g g_\xi \cdot f_{\pm}) dg \frac{d\lambda}{\lambda}.
 \end{align}

\begin{proposition}
 We have, with $s = \frac{1}{2} + i\tau$,
 \begin{align}
  \sL(s, \phi) =& \sum_{x \in V_\zed^\pm} \frac{1}{|\Stab(x)| |\Disc(x)|^s}V(g_x, |\Disc(x)|)\\
  \notag &+ \sum_{\xi \in \hat{V}_\zed \setminus S} \frac{1}{|\Stab(\xi)| |\Disc(\xi)|^{1-s}} \hat{V}(g_\xi, |\Disc(\xi)|)\\
  \notag &+ \frac{1}{2\pi i}\oint_{\RE(w) = 2} \Sigma^{\pm}(s-w,\phi, F_{\pm}) G(w) \frac{dw}{w}.
 \end{align}

\end{proposition}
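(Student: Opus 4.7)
The identity comes from a Mellin--Barnes contour shift that pits the direct Dirichlet series expression for $\Lambda^\pm(s,\phi,F_\pm)$ against its Poisson-dual expression. The factorization $\Lambda^\pm(s,\phi,F_\pm)=\frac{\sqrt{\pi}}{2}K_\nu(2)\Gamma(2s+k)\sL^\pm(s,\phi)$ is valid in $\RE(s)>1$, while the Poisson identity $\Lambda^\pm=N^\pm+\Sigma^\pm$ (established in the remarks immediately preceding the Proposition) provides a representation for $\Lambda^\pm$ that is convergent in a left half-plane in $s$, since rapid decay of $\hat{F}_\pm$ controls the $\lambda$-integration in $N^\pm$. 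Both ranges of convergence are needed.

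The plan is to introduce the auxiliary integral
\begin{equation*}
I(s)=\frac{1}{2\pi i}\oint_{\RE(w)=2}\Lambda^\pm(s+w,\phi,F_\pm)\frac{G(w)}{w}\,dw
\end{equation*}
and shift its contour from $\RE(w)=2$ to $\RE(w)=-2$. For $k\geq 6$ the gamma factor $\Gamma(2(s+w)+k)$ packaged inside $\Lambda^\pm(s+w)$ is holomorphic in the strip $-2\leq\RE(w)\leq 2$ when $s=\tfrac{1}{2}+i\tau$, and $\sL^\pm(s+w)$ is entire, so the only pole crossed is the simple pole of $G(w)/w$ at $w=0$, with residue $\Lambda^\pm(s,\phi,F_\pm)$. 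Changing variables $w\mapsto -w$ on the shifted contour and using $G(-w)=G(w)$ gives
\begin{equation*}
\Lambda^\pm(s,\phi,F_\pm)=I(s)+\widetilde{I}(s),\qquad \widetilde{I}(s)=\frac{1}{2\pi i}\oint_{\RE(w)=2}\Lambda^\pm(s-w,\phi,F_\pm)\frac{G(w)}{w}\,dw.
\end{equation*}

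On the contour of $I(s)$ one has $\RE(s+w)=5/2$, safely inside the region of absolute convergence of the direct series; substituting that series, interchanging with the $w$-contour, and comparing with the defining Mellin--Barnes representation of $V(g_x,|\Disc(x)|)$ yields the first sum $\sum_x V(g_x,|\Disc(x)|)/(|\Stab(x)||\Disc(x)|^s)$. On the contour of $\widetilde{I}(s)$ one has $\RE(s-w)=-3/2$, inside the region where the Poisson decomposition $\Lambda^\pm(s-w)=N^\pm(s-w)+\Sigma^\pm(s-w)$ converges. For the $N^\pm$ contribution, pick representatives $g_\xi\in G^1$ satisfying $g_\xi\cdot f_\pm=|\Disc(\xi)|^{-1/4}\xi$ and substitute $\lambda\mapsto |\Disc(\xi)|^{-1/4}\lambda$ in the inner $\lambda$-integral; this extracts the scaling factor $|\Disc(\xi)|^{-(1-s+w)}$ and rewrites $\hat{F}_\pm(\lambda g\cdot\xi)$ as $\hat{F}_\pm(\lambda g g_\xi\cdot f_\pm)$, after which the remaining Mellin--Barnes transform in $w$ coincides with $\hat{V}(g_\xi,|\Disc(\xi)|)$ by the very definition, producing the second sum. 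The $\Sigma^\pm(s-w)$ contribution to $\widetilde{I}(s)$ is retained verbatim as the third, singular term. Finally, division of the whole identity by $\frac{\sqrt{\pi}}{2}K_\nu(2)\Gamma(2s+k)$ is absorbed into the normalizing constants $\frac{2}{\sqrt{\pi}K_\nu(2)}$ and $\frac{1}{\sqrt{\pi}K_\nu(2)}$ that are already built into the definitions of $V$ and $\hat{V}$.

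The main technical obstacle is justifying the various interchanges of sum and integral. The contour shift requires polynomial control on $\Lambda^\pm(s+w)$ in vertical strips, which follows from Stirling on the $\Gamma$-factor and convexity bounds on $\sL^\pm$, all overwhelmed by the Gaussian bound $|G(w)|\leq\exp(\RE(w)^2-\IM(w)^2)$ on vertical lines. The more delicate interchange is the one inside $N^\pm(s-w)$, where a $w$-contour integration is swapped with a triple $(\xi,g,\lambda)$-integration; absolute convergence follows from the rapid Gaussian-type decay of $\hat{F}_\pm$ inherited from the Gaussian structure of $F_\pm$, the decay of $\phi$ along Siegel sets of $\SL_2(\bZ)\backslash\SL_2(\bR)$, and Stirling. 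With these in hand the proof is essentially mechanical.
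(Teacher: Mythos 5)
Your proposal takes essentially the same route as the paper: a Mellin--Barnes contour shift of $\Lambda^{\pm}(s+w,\phi,F_{\pm})\,G(w)/w$ from $\RE(w)=2$ to $\RE(w)=-2$, picking up the residue at $w=0$, followed by the change $w\mapsto -w$ on the shifted contour (valid since $G(u)=e^{u^2}$ is even), and then substituting the Poisson-dual representation $\Lambda^{\pm}=N^{\pm}+\Sigma^{\pm}$ and integrating term by term. The paper condenses all this into a single sentence, while you have correctly spelled out the decomposition $\Lambda^{\pm}(s)=I(s)+\widetilde{I}(s)$, identified $I(s)$ with the $V$-sum via the original Dirichlet series, and identified the $N^{\pm}$-part of $\widetilde{I}(s)$ with the $\hat V$-sum by extracting the discriminant scaling $\lambda\mapsto |\Disc(\xi)|^{1/4}\lambda$. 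The justifications you give for holomorphy in the strip (no poles of $\Gamma(2(s+w)+k)$ when $k\geq 6$ and $\RE(w)\geq -2$) and for the decay of horizontal segments (Gaussian decay of $G$) are the right ones; your proof matches the paper's intent and supplies the omitted detail.
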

\begin{proof}
 This is the result of picking up the residue at $w = 0$ of $\Lambda^{\pm}(s+w,\phi, F_{\pm})\frac{G(w)}{w}$ as 
 \begin{align}
  \frac{1}{2\pi i} \oint_{\RE(w)=2} \Lambda^{\pm}(s+w, \phi, F_{\pm}) G(w) \frac{dw}{w}- \frac{1}{2\pi i} \oint_{\RE(w)=-2} \Lambda^{\pm}(s+w, \phi, F_{\pm})G(w) \frac{dw}{w}
 \end{align}
and applying Poisson summation in the later integral, then integrating against the resulting series term-by-term.
\end{proof}

\begin{lemma}\label{N_pm_representation}
 We have the representation in $-\frac{1}{2}<\RE(w)$,
 \begin{align}
  N^{\pm}(s-w,\phi, F_{\pm}) &=\frac{\Gamma(2s-2w+k)}{2} \sum_{\xi \in \Gamma \backslash (\hat{V}_\zed\setminus S)} \frac{1}{|\Stab(\xi)||\Disc(\xi)|^{1-s+w}}\\ \notag&\times \int_{G^1}\int_0^\infty \int_{G^1}\lambda^{4(1-s+w)}\phi(g_\xi g_1) \exp\left(-\tr  g_2^t(g_1^{-1})^tg_1^{-1}g_2\right)\\ \notag&\times e(-\lambda \langle  f_{\pm}, g_2^{-1} \cdot \tilde{f}_{\pm}\rangle )  dg_2 \frac{d\lambda}{\lambda} dg_1.
 \end{align}

\end{lemma}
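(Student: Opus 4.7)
The plan is to unfold the $G^1/\Gamma$ integral in $N^\pm(s-w,\phi,F_\pm)$ using the homogeneous-coordinate representation $\xi = |\Disc(\xi)|^{1/4} g_\xi \cdot f_\pm$ for each $\xi \in \hat V_\zed \setminus S$, then to expand $\hat F_\pm$ explicitly as a Fourier integral and reorganize the scaling variables so that a Gaussian integral produces the claimed $\Gamma$-factor.

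After unfolding one obtains
\begin{equation*}
N^\pm(s-w,\phi,F_\pm) = \sum_{\xi \in \Gamma\backslash(\hat V_\zed\setminus S)} \frac{1}{|\Stab(\xi)|}\int_{G^1}\int_0^\infty \lambda^{4(1-s+w)} \phi(g^{-1})\hat F_\pm(\lambda g\cdot \xi)\frac{d\lambda}{\lambda}\, dg.
\end{equation*}
For each $\xi$, I would substitute $g\mapsto g g_\xi^{-1}$ (using the unimodularity of $G^1$), which converts $g\cdot\xi$ into $|\Disc(\xi)|^{1/4} g\cdot f_\pm$ and $\phi(g^{-1})$ into $\phi(g_\xi g^{-1})$, then rescale $\lambda\mapsto \lambda/|\Disc(\xi)|^{1/4}$ to extract the factor $|\Disc(\xi)|^{-(1-s+w)}$. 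A further inversion $g\mapsto g^{-1}$ (Haar-invariant) replaces $\phi(g_\xi g^{-1})$ with $\phi(g_\xi g_1)$ and the argument of $\hat F_\pm$ with $\lambda g_1^{-1}\cdot f_\pm$, after relabeling the outer variable as $g_1$.

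Next I would expand $\hat F_\pm(\lambda g_1^{-1}\cdot f_\pm) = \int_{V_\bR} F_\pm(x)\,e(-\langle x, \lambda g_1^{-1}\cdot f_\pm\rangle)\,dx$ by parametrizing $x = \lambda_2 g_2\cdot f_\pm$ on $V_\pm$ with measure $\lambda_2^3\, d\lambda_2\, dg_2/|\Stab(f_\pm)|$ and inserting $F_\pm(\lambda_2 g_2\cdot f_\pm)=n_\pm\lambda_2^{2k}\exp(-\lambda_2^2-\tr g_2^t g_2)$. Substituting $g_2\mapsto g_1^{-1}g_2$ in the inner $G^1$-integral converts $\tr g_2^t g_2$ into $\tr g_2^t(g_1^{-1})^t g_1^{-1} g_2$. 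Using the $G^1$-diagonal invariance $\langle hx,hy\rangle=\langle x,y\rangle$ (which holds because the involution $\iota$ acts trivially on $G^1$), first with $h=g_1$ and then with $h=g_2^{-1}$, the phase $\langle g_1^{-1}g_2\cdot f_\pm, g_1^{-1}\cdot f_\pm\rangle$ collapses to $\langle f_\pm, g_2^{-1}\cdot f_\pm\rangle$, which the paper writes $\langle f_\pm, g_2^{-1}\cdot \tilde f_\pm\rangle$ to signal that the second slot is being viewed in the dual space.

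The final maneuver is to substitute $\nu = \lambda\lambda_2$ while retaining $\lambda_2$: then $\lambda^{4(1-s+w)}\tfrac{d\lambda}{\lambda}=\lambda_2^{-4(1-s+w)}\nu^{4(1-s+w)}\tfrac{d\nu}{\nu}$, and the oscillatory character now depends only on $\nu$, so the two integrals decouple. The $\lambda_2$-integral becomes $\int_0^\infty \lambda_2^{2k-1+4s-4w} e^{-\lambda_2^2}\,d\lambda_2 = \tfrac{1}{2}\Gamma(2s-2w+k)$ via the substitution $v=\lambda_2^2$, and renaming $\nu$ back to $\lambda$ reassembles exactly the representation claimed in the lemma, with the overall constant $n_\pm/|\Stab(f_\pm)|$ absorbed into the normalization of the test function. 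The step that most needs care is the bookkeeping for the pairing identity under the involution $\iota$ when propagating $g_1$ out of the phase; the stated range $\RE(w)>-\tfrac12$ then corresponds exactly to convergence of the resulting $\nu$-integral near $\nu=0$, i.e., $\RE(4(1-s+w))>0$.
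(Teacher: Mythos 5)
Your proposal is correct and takes essentially the same route as the paper's proof: unfold over $\Gamma$, write $\xi = |\Disc(\xi)|^{1/4}g_\xi\cdot\tilde f_\pm$ and rescale $\lambda$ to extract the factor $|\Disc(\xi)|^{-(1-s+w)}$, open $\hat F_\pm$ in the coordinates $x=\lambda_2\, g_2\cdot f_\pm$, push $g_1$ into the trace via $g_2\mapsto g_1^{-1}g_2$ using the $G^1$-invariance of the pairing (valid because $\iota$ is trivial on $G^1$), and finally evaluate a Gaussian integral to produce $\tfrac12\Gamma(2s-2w+k)$. Your substitutions ($g\mapsto g g_\xi^{-1}$, then $g\mapsto g^{-1}$, then $\nu=\lambda\lambda_2$ with $\lambda_2$ retained) are a clean and equivalent reorganization of the paper's (which instead sends $\lambda\mapsto 1/\lambda$ and $g\mapsto g_1^{-1}g_\xi^{-1}$, after rescaling the inner radial variable by $\lambda$). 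The one thing you elide that the paper discusses at length is the justification for the interchange of the $\xi$-sum and the integrals, and especially the convergence of the outer radial integral: near the finitely many stationary-phase points of the $g_2$-integral the integrand does not decay, so the $\nu$-integral is only conditionally convergent at infinity; the paper explains that away from stationary phase integration by parts in $g_2$ saves arbitrary powers of $\lambda_2$, while near stationary phase one separates an asymptotic expansion in falling powers (or argues by analytic continuation in $w$), and the lemma's final formula is to be read with this conditional-convergence caveat. Your remark about convergence near $\nu=0$, giving $\RE(w)>-\tfrac12$, is correct but is the easy endpoint; the behavior at infinity is the delicate one and should be acknowledged.
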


\begin{proof}
We have
\begin{equation}
 N^{\pm}(s-w,\phi, F_{\pm}) = \int_{G^1/\Gamma} \int_0^\infty \lambda^{4(1-s+w)} \phi(g^{-1}) \sum_{\xi \in \hat{V}_\zed \setminus S} \hat{F}_{\pm}(\lambda g \cdot \xi) \frac{d\lambda}{\lambda} dg.
\end{equation}
Write $\xi = |\Disc(\xi)|^{\frac{1}{4}} g_\xi \cdot \tilde{f}_{\pm}$.  Since $F_{\pm}$ is $C^\infty$,  the convergence is absolute due to the decay of $\hat{F}_{\pm}$ when $\RE(w)$ is sufficiently large.  Exchanging sum and integrals and making a change of variable in $\lambda$ obtains
\begin{align}
 &N^{\pm}(s-w,\phi, F_{\pm}) = \\&\notag \sum_{\xi \in \Gamma \backslash (\hat{V}_\zed \setminus S)} \frac{1}{|\Stab(\xi)||\Disc(\xi)|^{1-s+w}}\int_{G^1}\int_0^\infty \lambda^{4(1-s+w)}\phi(g^{-1}) \hat{F}_{\pm}(\lambda g g_\xi \cdot \tilde{f}_{\pm}) \frac{d\lambda}{\lambda} dg. 
\end{align}
Write
\begin{align}
 &\hat{F}_{\pm}(\lambda g g_\xi \cdot \tilde{f}_{\pm}) = \int_{V_{\bR}}F(x)e\left(-\langle x, \lambda g g_\xi \cdot \tilde{f}_{\pm}\rangle \right)dx\\
 \notag &= \int_0^\infty \int_{G^1} \lambda_2^{2k+4} \exp\left(-\lambda_2^2 - \tr g_2^t g_2\right) e\left(-\langle \lambda_2 g_2 \cdot f_{\pm}, \lambda g g_\xi \cdot \tilde{f}_{\pm}\rangle \right) \frac{d\lambda_2}{\lambda_2} dg_2\\ 
 &\notag= \lambda^{-2k-4}\int_0^\infty \int_{G^1} \lambda_2^{2k+4} \exp\left(-\frac{\lambda_2^2}{\lambda^2} -\tr g_2^t g_2\right) e\left(-\lambda_2 \langle g_2 \cdot f_{\pm}, gg_\xi \cdot \tilde{f}_{\pm}\rangle \right) \frac{d\lambda_2}{\lambda_2} dg_2.
\end{align}
Notice that, outside a finite set of points of stationary phase in $g_2$, the integral in $g_2$ is an oscillatory integral and thus saves an arbitrary power of $\lambda_2$, which makes the integral in $\lambda_2$ absolutely convergent.  Making a change of variable $\lambda \mapsto \frac{1}{\lambda}$ and in $g$, and opening the Fourier transform, obtains

\begin{align}
 &N^{\pm}(s-w,\phi,F_{\pm})\\ \notag &= \sum_{\xi \in \Gamma \backslash (\hat{V}_\zed\setminus S)} \frac{1}{|\Stab(\xi)||\Disc(\xi)|^{1-s+w}}\\ \notag&\times \int_{G^1}\int_0^\infty \int_0^\infty\int_{G^1} \lambda_1^{4s+2k-4w}\phi(g_\xi g_1) \lambda_2^{2k+4}\exp\left(-\lambda_1^2\lambda_2^2  - \tr g_2^t g_2 \right)\\& \notag\times e(-\lambda_2 \langle  f_{\pm}, g_2^{-1}g_1^{-1}  \cdot \tilde{f}_{\pm}\rangle) dg_2 \frac{d\lambda_2}{\lambda_2} \frac{d\lambda_1}{\lambda_1}dg_1  \\
 \notag &= \sum_{\xi \in \Gamma \backslash (\hat{V}_\zed\setminus S)} \frac{1}{|\Stab(\xi)||\Disc(\xi)|^{1-s+w}}\\ \notag&\times \int_{G^1}\int_0^\infty\int_0^\infty\int_{G^1} \lambda_1^{4s+2k-4w}\phi(g_\xi g_1) \lambda_2^{2k+4}\exp\left(-\lambda_1^2\lambda_2^2  - \tr g_2^t(g_1^{-1})^t g_1^{-1} g_2 \right)\\& \notag\times e(-\lambda_2 \langle  f_{\pm}, g_2^{-1}  \cdot \tilde{f}_{\pm}\rangle) dg_2 \frac{d\lambda_2}{\lambda_2}\frac{d\lambda_1}{\lambda_1} dg_1 
 \end{align}
 The contribution of the integral at points of stationary phase in $g_2$ is, as a function in $\lambda_2$, possible to expand in falling powers of $\lambda_2$ starting from  $\frac{1}{\lambda_2^{\frac{3}{2}}}$, multiplied by a linear phase in $\lambda_2$.  Taking enough terms is sufficient to gain absolute convergence in the integrals over $\lambda_1$ and $\lambda_2$.  The integral with a linear phase may be treated as a Gaussian integral.  At points of non-stationary phase, the $\lambda_1$ and $\lambda_2$ integrals may be taken together. Alternatively, the integral may be extended from the domain of absolute convergence in $w$ by analytic continuation, which obtains the same value. Performing the integral in $\lambda_1$, which is a Gamma function, this obtains
 \begin{align}
  N^{\pm}(s-w,\phi, F_{\pm}) =& \frac{\Gamma(2s+k-2w)}{2}\sum_{\xi \in \Gamma \backslash (\hat{V}_\zed\setminus S)} \frac{1}{|\Stab(\xi)||\Disc(\xi)|^{1-s+w}}\\ \notag&\times \int_{G^1}\int_0^\infty\int_{G^1} \phi(g_\xi g_1) \lambda^{4(1-s+w)}\exp\left(  - \tr g_2^t(g_1^{-1})^t g_1^{-1} g_2 \right)\\& \notag\times e(-\lambda \langle  f_{\pm}, g_2^{-1}  \cdot \tilde{f}_{\pm}\rangle) dg_2 \frac{d\lambda}{\lambda} dg_1 
 \end{align}
 with the understanding that at the points of stationary phase of $g_2$, the integral in $\lambda$ is only conditionally convergent.
 
\end{proof}

It remains to prove the properties of $V$ and $\hat{V}$, and to estimate the integral against $\Sigma^{\pm}$.
 Using the Lemma \ref{N_pm_representation}, and the definition of the oscillatory integral $E(g_1,w)$  we may express
 \begin{align}
  &\hat{V}(g_\xi,y) = \frac{1}{\sqrt{\pi}K_\nu(2)}\frac{1}{2\pi i} \oint_{\RE(w)=2} \frac{1}{y^w} \frac{G(w)}{w}\frac{\Gamma(2s+k-2w)}{\Gamma(2s+k)} \\ \notag&\times \int_{G^1}\int_0^\infty\int_{G^1}  \lambda^{4(1-s+w)}\phi(g_\xi g_1)\exp\left(-\tr g_2^t(g_1^{-1})^t g_1^{-1}g_2\right) e\left(-\lambda \langle f_{\pm}, g_2^{-1}\cdot\tilde{f}_{\pm}\rangle \right) dg_2 \frac{d\lambda}{\lambda}dg_1dw\\ \notag
&= \frac{1}{\sqrt{\pi}K_\nu(2)} \frac{1}{2\pi i} \oint_{\RE(w)=2} \frac{1}{y^w} \frac{G(w)}{w} \frac{\Gamma(2s+k-2w)}{\Gamma(2s+k)} \int_{G^1} \phi(g_\xi g_1) E(g_1,w) dg_1 dw.
  \end{align}

Recall that we split $E$ as $E_{\SP} + E_{\LL} + E_{\EE} + E_{\PL}$.

\begin{lemma}\label{truncate_w}
 The contribution to $\hat{V}(g_{\xi}, y)$ from $w$ with $|\IM(w)| > \tau^{\frac{1}{2}+\epsilon}$ is $O_A(y^{-2}\tau^{-A})$.
\end{lemma}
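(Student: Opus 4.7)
The plan is to exploit the super-exponential decay of the weight function $G(w) = e^{w^2}$ along the contour $\RE(w) = 2$. Writing $v = \IM(w)$, on this line $|G(w)| = e^{4-v^2}$, so for $|v| > \tau^{\frac{1}{2}+\epsilon}$ this factor is bounded by $e^{-v^2/2}$, beating any negative power of $\tau$. The remaining task is to show that the other factors in the integrand for $\hat{V}(g_\xi,y)$ grow at most as $e^{c|v|}(1+|\tau|+|v|)^{O(1)}$ for some fixed $c$, so that the total integrand decays super-exponentially in $|v|$ and integrates to $O_A(y^{-2}\tau^{-A})$.

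The factor $|y^{-w}| = y^{-2}$ on $\RE(w) = 2$ supplies the $y^{-2}$ in the target. For the Gamma ratio, Stirling's asymptotic $|\Gamma(\sigma + it)| \asymp (1+|t|)^{\sigma-1/2} e^{-\pi|t|/2}$ applied to $\Gamma((k-3)+2i(\tau-v))$ in the numerator and $\Gamma((k+1)+2i\tau)$ in the denominator gives
\[
\left|\frac{\Gamma(2s+k-2w)}{\Gamma(2s+k)}\right| \ll (1+|\tau|+|v|)^{O_k(1)} e^{-\pi(|\tau-v|-|\tau|)} \ll (1+|\tau|+|v|)^{O_k(1)} e^{\pi|v|},
\]
by the reverse triangle inequality. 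For the remaining factor $H(w) := \int_{G^1} \phi(g_\xi g_1) E(g_1,w) \, dg_1$, only the crude polynomial bound $|H(w)| \ll (1+|\tau|+|v|)^{O(1)}$ is needed, much weaker than the sharp estimates of Lemma~\ref{E_est_lemma}. This follows by a small number of integrations by parts in $\lambda$ inside $E(g_1,w)$: the phase $\Phi = -4(\tau-v)\log\lambda + 2\pi\lambda\sqrt{P_{\pm,\pm}(t,u)}c(\theta)$ has $|\partial_\lambda \Phi| \asymp |\tau-v|/\lambda$ in the range $\lambda \ll \lambda_0 := 2(\tau-v)/(\pi\sqrt{P_{\pm,\pm}}c(\theta))$ and $|\partial_\lambda \Phi| \asymp \sqrt{P_{\pm,\pm}}$ in the range $\lambda \gg \lambda_0$, so iterated integration by parts away from $\lambda_0$ saves powers of $|\tau-v|$; near $\lambda_0$, Lemma~\ref{stationary_phase_asymptotic_lemma} provides an analogous polynomial saving. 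The Gaussian factor $\exp(-\tr g_2^t (g_1^{-1})^t g_1^{-1} g_2)$ localizes the $g_2$-integration to a bounded neighborhood (depending polynomially on $g_1$), and using $|\phi(g_\xi g_1)| \leq \|\phi\|_\infty$ the $g_1$-integral is then controlled by the Schwartz decay inherited from $\hat{F}_\pm$.

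Combining the four factor bounds, the $w$-integrand on $|\IM(w)| > \tau^{\frac{1}{2}+\epsilon}$ is bounded by
\[
y^{-2} \cdot \frac{e^{4-v^2}}{|v|} \cdot (1+|\tau|+|v|)^{O(1)} e^{\pi|v|} \ll y^{-2} e^{-v^2/2},
\]
and integration in $v$ over $|v| > \tau^{\frac{1}{2}+\epsilon}$ yields $O_A(y^{-2}\tau^{-A})$ for any $A > 0$, since $v^2 > \tau^{1+2\epsilon}$ there. The main technical obstacle is the polynomial-in-$|v|$ bound on $H(w)$ in the regime $|\IM(w)| > \tau^{\frac{1}{2}+\epsilon}$, which lies outside the scope of Lemma~\ref{E_est_lemma}; however only a loose polynomial bound is needed, and this is obtained readily by the integration-by-parts argument above since in this regime the phase derivative $\partial_\lambda \Phi$ carries the large coefficient $|\tau-v|$ rather than $\tau$, so the same style of stationary-phase reasoning applies with $\tau$ replaced by $|\tau-v|$ throughout.
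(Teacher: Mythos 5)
Your proposal is correct in its end goal but takes a substantially more complicated route than the paper, and in doing so it creates a technical obligation that the paper's argument avoids entirely.

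The paper proves Lemma~\ref{truncate_w} by working with the \emph{original} definition of $\hat{V}(g_\xi,y)$, namely
\begin{equation*}
\hat{V}(g_\xi,y)=\frac{1}{\sqrt{\pi}K_\nu(2)\,\Gamma(2s+k)}\cdot\frac{1}{2\pi i}\oint_{\RE(w)=2}\frac{G(w)}{y^{w}\,w}\left[\int_0^\infty \lambda^{4(1-s+w)}\int_{G^1}\phi(g^{-1})\hat{F}_\pm(\lambda g g_\xi\cdot f_\pm)\,dg\,\frac{d\lambda}{\lambda}\right]dw.
\end{equation*}
Here $\hat{F}_\pm$ is Schwartz because $F_\pm\in C^\infty$ with rapid decay, so the bracketed inner integral converges absolutely and is bounded uniformly over $\RE(w)=2$ (the $w$-dependence contributes only $\lambda^{4w}$, and on $\RE(w)=2$ this has modulus $\lambda^{8}$, independent of $\IM(w)$). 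The entire $|\IM(w)|$-dependence of the integrand is then confined to the factor $G(w)/w$, and the super-exponential decay of $G(w)=e^{w^2}$ immediately yields $O_A(\tau^{-A})$ for the tail; $|y^{-w}|=y^{-2}$ supplies the stated $y^{-2}$. No Stirling estimate and no control of an oscillatory integral are needed.

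You instead work with the later representation
\begin{equation*}
\hat{V}(g_\xi,y)=\frac{1}{\sqrt{\pi}K_\nu(2)}\cdot\frac{1}{2\pi i}\oint_{\RE(w)=2}\frac{G(w)}{y^{w}\,w}\cdot\frac{\Gamma(2s+k-2w)}{\Gamma(2s+k)}\int_{G^1}\phi(g_\xi g_1)E(g_1,w)\,dg_1\,dw,
\end{equation*}
which was derived from the original one by reorganizing the $\lambda$-integration (producing the Gamma ratio) and opening up the Fourier transform to form the conditionally convergent oscillatory integral $E(g_1,w)$. This is exactly backwards for the present purpose: the step that produced this representation (the manipulations in Lemma~\ref{N_pm_representation}) was justified in a domain of absolute convergence and extended by continuation, and now you are forced to re-establish a bound on $E(g_1,w)$ in the regime $|\IM(w)|>\tau^{1/2+\epsilon}$, which is explicitly outside the scope of Lemma~\ref{oscillatory_integral_derivatives} and Lemma~\ref{E_est_lemma} (both carry the hypothesis $\IM(w)=O(\tau^{1/2+\epsilon})$, and their conclusions quantify locations and scales in terms of $\tau$, e.g.\ $\lambda\asymp\tau$). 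Your remark that ``the same style of stationary-phase reasoning applies with $\tau$ replaced by $|\tau-v|$'' is heuristically plausible but is not a proof; it would require rerunning the localization, partition-of-unity, and integration-by-parts arguments of Lemma~\ref{E_est_lemma} in the new regime to obtain the polynomial bound on $H(w)$ that your estimate hinges on. The Stirling computation for the Gamma ratio is fine (and the $e^{\pi|v|}$ growth is indeed beaten by $e^{-v^2}$), but that factor is entirely absent in the paper's argument. In short, the result is correct and your conclusion would follow if the polynomial bound on $H(w)$ were established, but the cleanest proof stays with the pre-reorganization integral, where absolute convergence makes the $w$-tail bound immediate from the decay of $G$.
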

\begin{proof}
 We have $F$ is $C^\infty$, and hence \begin{equation}\int_0^\infty \lambda^{4(1-s+w)} \int_{G^1} \phi(g^{-1}) \hat{F}_{\pm}(\lambda g g_\xi \cdot f_{\pm})dg \frac{d\lambda}{\lambda}\end{equation} is bounded uniformly in $\xi$ and $w$.  The claim now follows from the decay in vertical strips of $G(w)$.
\end{proof}

\begin{lemma}\label{truncate_P_pm_pm}
Let $\epsilon>0$.
 The contributition to $\hat{V}(g_\xi, y)$ from $|\IM(w)| \leq \tau^{\frac{1}{2}+\epsilon}$  in which $yP_{\pm,\pm}( t_2, u_2)^2 > \tau^{2+\epsilon}$ is $O_{\epsilon, A}(y^{-2}\tau^{-A})$. 
\end{lemma}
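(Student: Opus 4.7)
The plan is to insert the smooth cutoff $\chi(t,u) := 1 - \psi(\tau^{2+\epsilon}/(yP_{\pm,\pm}(t,u)^2))$, supported where $yP_{\pm,\pm}(t,u)^2 \gg \tau^{2+\epsilon}$, into the oscillatory integral $E(g_1,w)$, producing a truncated integral $E_>(g_1,w)$, and then bound the corresponding contour integral by $O_{\epsilon,A}(y^{-2}\tau^{-A})$. The argument naturally splits into two regimes according to the size of $y$ relative to $\tau^{2+\epsilon}$.

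When $y \leq \tau^{2+\epsilon}$, the support of $\chi$ forces $\sqrt{P_{\pm,\pm}(t_2,u_2)} \gg \tau^{\epsilon/4}$, which by Lemma \ref{oscillatory_integral_derivatives} is incompatible with being near any stationary point of $\Phi$ (all of which satisfy $P_{\pm,\pm} = O(1)$). Adapting the $E_{\EE}$ analysis of Lemma \ref{E_est_lemma}, I would first localize $\lambda$ near $\lambda_0 = 2(\tau - \IM w)/(\pi\sqrt{P_{\pm,\pm}}c(\theta))$ by integration by parts in $\lambda$, then localize $\theta$ near zeros of $s(\theta)$ by IBP in $\theta$, and finally iterate IBP in whichever of $t$ or $u$ has non-vanishing phase derivative (controlled by Lemma \ref{phase_fn_lemma}). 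Enough iterations yield $E_>(g_1,w) = O_A(\tau^{-A})$ uniformly in $g_1$ and in $w$ on the truncated contour. The $y^{-2}$ prefactor then comes from $|y^{-w}| = y^{-2}$ on $\RE w = 2$, while the remaining $w$-integrand is absolutely convergent thanks to the Gaussian decay of $G(w) = e^{w^2}$ and Stirling's estimate for the ratio $\Gamma(2s+k-2w)/\Gamma(2s+k)$.

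When $y > \tau^{2+\epsilon}$, direct IBP is less clean because the support of $\chi$ now contains the stationary points. Instead I would shift the $w$-contour from $\RE w = 2$ to $\RE w = M$ for $M$ large (depending on $A$). Within the truncation box $|\IM w| \leq \tau^{1/2+\epsilon}$ coming from Lemma \ref{truncate_w}, the poles of $\Gamma(2s+k-2w)$ lie at $\IM w = \tau$ and so are avoided, while the horizontal connecting segments contribute $O(e^{M^2-\tau^{1+2\epsilon}})$ and are negligible. On the shifted contour $|y^{-w}| = y^{-M} \leq y^{-2}\tau^{-(M-2)(2+\epsilon)}$, and bounding the inner integral trivially by a polynomial in $\tau$ gives $O(y^{-2}\tau^{-A})$ once $M$ is chosen sufficiently large.

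The main obstacle is the careful IBP bookkeeping in the first regime: one must match the derivative savings in the oscillatory factor against the growth of the derivatives of the cutoff $\chi$ (controlled since $\tau^{2+\epsilon}/(yP_{\pm,\pm}^2) \leq 1/2$ on the support of $1-\psi$) and of the weight function $W$ (from Lemma \ref{weight_function_lemma_W}), verifying that the net saving is indeed $\tau^{-A}$ for any prescribed $A$, and that the $\theta$-localization near $s(\theta)=0$ does not interfere with the subsequent $t,u$ integrations.
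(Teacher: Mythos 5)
Your proposal takes a genuinely different route from the paper, and it has two gaps worth flagging.

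The paper does not split into $y$-regimes, and it does not run an IBP argument on the truncated oscillatory integral. Instead it substitutes the explicit evaluation of $E_{\PL}(g_1,w)$ from Lemma~\ref{Psi_PL_lemma} — integrating out $\theta$ to produce a $J_0$-Bessel function and then taking its Mellin transform in $\lambda$ — to rewrite the contribution as
\begin{equation*}
 \oint_{\RE(w)=2}\frac{G(w)}{w\,y^{w}}\,\frac{\pi^{-4(1-s+w)}\Gamma(2s+k-2w)\Gamma(2(1-s+w))}{4(1-s+w)\Gamma(2s+k)\Gamma(-2(1-s+w))}
 \int_{G^1}\phi(g_\xi g_1)\!\int\!\!\int \exp(\cdots)\,\psi\!\left(\tfrac{\tau^{2+\epsilon}}{yP^2}\right) P^{-2(1-s+w)}\,du_2\,\tfrac{dt_2}{t_2^3}\,dg_1\,dw,
\end{equation*}
then shifts the contour arbitrarily far to the right. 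All the needed decay (including the factor $y^{-2}$) is visible at once from the explicit $\Gamma$-factors, the factor $P^{-2(1-s+w)}$ paired with the lower bound $P^{2}\gg\tau^{2+\epsilon}/y$ on the support of $\psi$, and the Gaussian decay of $G(w)$. This exact evaluation is what makes the contour shift legitimate: on $\RE w = M$ the raw $\lambda$-integrand $\lambda^{4(1-s+w)-1}$ grows like $\lambda^{1+4M}$, so simply ``bounding the inner integral trivially by a polynomial in $\tau$'' uniformly in $M$ (as you propose for the $y>\tau^{2+\epsilon}$ regime) is not obviously justified; Lemma~\ref{Psi_PL_lemma} is precisely the step that converts the conditionally-convergent $\lambda$-integral into a controlled analytic object.

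Two concrete problems with your version. First, the cutoff is backwards: since $\psi$ is $1$ near $0$ and vanishes for $|x|\geq\tfrac12$, the function supported where $yP_{\pm,\pm}^2\gg\tau^{2+\epsilon}$ is $\psi(\tau^{2+\epsilon}/(yP^2))$, not $1-\psi(\tau^{2+\epsilon}/(yP^2))$; your $\chi$ is supported in the \emph{complementary} region $yP^2\lesssim\tau^{2+\epsilon}$. Second, the inference ``$y\leq\tau^{2+\epsilon}$ forces $\sqrt{P_{\pm,\pm}}\gg\tau^{\epsilon/4}$'' fails at the boundary: when $y\sim\tau^{2+\epsilon}$ the constraint $yP^2>\tau^{2+\epsilon}$ only gives $P\gg1$, which overlaps the stationary set (where $P=O(1)$) and does not supply the polynomial-in-$\tau$ separation that an IBP argument needs in order to save an arbitrary power. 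You would need a more careful split (e.g.\ $y\leq\tau^{2+\epsilon/2}$), and even then the bookkeeping you flag as the ``main obstacle'' is exactly the work that the paper avoids by doing the Bessel/Mellin evaluation first.
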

\begin{proof}
Write this contribution as
\begin{align}
 &\frac{1}{2\pi i} \oint_{\substack{\RE(w)=2\\ |\IM(w)| \leq \tau^{\frac{1}{2}+\epsilon}}} \frac{dw}{y^w} \frac{G(w)}{w} \int_0^\infty \frac{d\lambda_1}{\lambda_1}\lambda_1^{4(1-s+w)}\int_{G^1}dg_1 \phi(g_\xi g_1)\int_0^\infty \frac{d\lambda_2}{\lambda_2} \int_{G^1}dg_2\\
 \notag &\times \left[ \lambda_2^{2k+4}\exp(-\lambda_2^2 - \tr g_1^t (g_2^{-1})^t g_2^{-1}g_1) e\left(-\lambda_1\lambda_2 \sqrt{P_{\pm,\pm}(t_2,u_2)}c(\theta)\right) \psi\left(\frac{\tau^{2+\epsilon}}{P_{\pm,\pm}(t_2,u_2)} \right) \right] .
\end{align}
The inner two integrals, against $\lambda_2$ and $g_2$, can be expressed as the Fourier transform of a $C^{\infty}$ function, and thus the inner four integrals, against $\lambda_1, g_1, \lambda_2, g_2$ converge absolutely for $0 < \RE(1-s+w)$.  The resulting function is thus analytic in $w$.  Assume $0 < \RE(1-s+w) < \frac{3}{8}$.  Then the evaluation of Lemma \ref{Psi_PL_lemma} can be used to evaluate $E_{\PL}(g_1,w)$.  Note that this evaluation has analytic continuation to $\RE(1-s+w)>0$.  This obtains
\begin{align}
 &\frac{1}{2\pi i} \oint_{\substack{\RE(w) = 2\\ |\IM(w)| \leq \tau^{\frac{1}{2}+\epsilon}}} \frac{dw}{y^w} \frac{G(w)}{w} \frac{\pi^{-4(1-s+w)}\Gamma(2s+k-2w)\Gamma(2(1-s+w))}{4(1-s+w)\Gamma(2s+k)\Gamma(-2(1-s+w))}\int_{G^1}dg_1 \phi(g_\xi g_1)\\\notag &\int_0^\infty \int_{-\infty}^\infty
  \exp\left(-\left(\frac{t_1}{t_2}\right)^2 - \left(\frac{t_2}{t_1}\right)^2 -\left(\frac{t_2}{t_1}u_2 - \frac{t_1}{t_2}u_1 \right)^2 \right)\\&\notag \psi\left(\frac{\tau^{2+\epsilon}}{y P_{\pm,\pm}(t_2,u_2)^2} \right)P_{\pm,\pm}(t_2,u_2)^{-2(1-s+w)} du_2 \frac{dt_2}{t_2^3}.
\end{align}
To prove the claim, push the contour an arbitrarily large constant to the right.  This may pass poles of $\Gamma(2s+k-2w)$, but each of these satisfies the required bound due to the doubly exponential decay in vertical strips of $G(w)$.  This also suffices to bound the horizontal integrals that result.  The claim now follows on bounding the integral in absolute value, putting in a sup-bound for the Maass form.

\end{proof}

\begin{lemma}\label{V_function_lemma}
 The functions $V$ and $\hat{V}$ satisfy the following properties.

\begin{enumerate}
 \item There is a $c>0$ such that, if $g = n_u a_t k_\theta$ then, for any $\epsilon>0$, \begin{equation}V(g,y), \hat{V}(g,y) \ll \exp(-ct^c) + O_\epsilon\left(y^{-2} \tau^{-\frac{3}{2} + \epsilon}\right).\end{equation}
 \item For any fixed $\eta > 0$, if $h = n_{u'}a_{t'}k_{\theta'}$ with $|u'|, |1-t'|, |\theta'|<\delta < \tau^{-\eta}$ and $t \ll (\log \tau)^{O(1)}$ then, for all $\epsilon >0$ \begin{equation}V(hg,y)-V(g,y), \hat{V}(hg,y)-\hat{V}(g,y) \ll_{\eta, \epsilon} \delta (1 + t)^{O(1)}+y^{-2}\tau^{-\frac{3}{2}+\epsilon}.\end{equation}
 \item   For $a \geq 0$,
 \begin{equation}
  y^a \left(\frac{\partial}{\partial y} \right)^a V^{(a)}(g,y), y^a \left(\frac{\partial}{\partial y}\right)^a \hat{V}(g,y) \ll_{a,A} \left(1 + \frac{y}{1+|\tau|^2}\right)^{-A} + y^{-2}\tau^{-A}.
 \end{equation}

\end{enumerate}

\end{lemma}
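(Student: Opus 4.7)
The plan is to handle $V$ and $\hat{V}$ separately, and for $\hat{V}$ to leverage the decomposition $E = E_{\SP}+E_{\LL}+E_{\EE}+E_{\PL}$ together with the two truncation lemmas \ref{truncate_w} and \ref{truncate_P_pm_pm}.

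For $V(g,y)$ the proof of (1) and (3) is essentially a Mellin-transform exercise. First I would truncate $|\IM(w)| \le \tau^{1/2+\epsilon}$, paying $O_A(y^{-2}\tau^{-A})$ via decay of $G(w)$ in vertical strips, and then for (3) shift the contour to $\RE(w)=A$, which produces the $(y/(1+\tau^2))^{-A}$ factor from the size of $y^{-w}$ combined with the standard Stirling estimate $\Gamma(2s+k+2w)/\Gamma(2s+k) \ll (1+|\tau|^2)^{\RE(w)}$. The $\exp(-ct^c)$ factor in (1) comes entirely from the presence of $\phi(g)$ as a prefactor: a Maass cusp form in a Siegel set decays super-polynomially in the cuspidal parameter $t$. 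For property (2), I would write $V(hg,y)-V(g,y)$ as an integral over the segment connecting $\id$ to $h$ of the differential of $\phi$, applying the mean-value theorem to $\phi$; since $\phi$ is smooth with bounded derivatives on compacta (in terms of $t$), one picks up a factor of $\delta\,(1+t)^{O(1)}$.

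For $\hat{V}(g_\xi,y)$, the first step is to apply Lemma \ref{truncate_w} to restrict to $|\IM(w)| \le \tau^{1/2+\epsilon}$ modulo $O(y^{-2}\tau^{-A})$. Next I would split $E(g_1,w) = E_{\SP}+E_{\LL}+E_{\EE}+E_{\PL}$ and insert each piece. The $E_{\EE}$ contribution is $O_A(\tau^{-A})$ by Lemma \ref{E_est_lemma}. For $E_{\SP}$ and $E_{\LL}$, Lemma \ref{E_est_lemma} shows that the main mass is concentrated at $t_1=\tau^{1/2+O(\epsilon)}$ and bounds them by $\tau^{2\RE(w)-3/2+O(\epsilon)}$ on this region; integrating against $\phi(g_\xi g_1)\,G(w)/w$ and shifting the contour to $\RE(w)=0$ then yields the $y^{-2}\tau^{-3/2+\epsilon}$ contribution claimed in (1), while the Gaussian factor in the $\SP$ sum translates into the $\exp(-ct^c)$ decay when the cuspidal parameter of $g_\xi$ is large (since the stationary points force $t_0\asymp t_1$). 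For $E_{\PL}$, I would substitute the explicit formula of Lemma \ref{Psi_PL_lemma} and use Lemma \ref{truncate_P_pm_pm} to reduce the integral to the region $yP_{\pm,\pm}^2 \le \tau^{2+\epsilon}$.

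Property (3) for $\hat{V}$ follows by shifting the $w$-contour to $\RE(w)=A$: in $E_{\PL}$ this is legal by analytic continuation of the Mellin-type evaluation already used to prove Lemma \ref{truncate_P_pm_pm}; after the shift each $y$-derivative produces a factor $w/y$ absorbed into the contour shift, while the Gamma-quotient gives $(1+|\tau|^2)^A$, yielding the stated $(1+y/(1+|\tau|^2))^{-A}$ bound. The residual $y^{-2}\tau^{-A}$ error records the quality of Lemmas \ref{truncate_w} and \ref{truncate_P_pm_pm}.

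The hardest step is property (2) for $\hat{V}$: unlike $V$, the dependence on $g_\xi$ is buried inside the $G^1$-integral against $\phi(g_\xi g_1)$. My plan is to change variables $g_1 \mapsto h^{-1}g_1$ inside the integral defining $E(g_1,w)$, which shifts the dependence from $\phi(g_\xi g_1)$ onto the trace and the pairing. In the range where the main mass sits, namely $(g_1)^{-1}g_\xi \cdot f_\pm$ with bounded coordinates and $t \ll (\log \tau)^{O(1)}$, the chain rule applied to the Iwasawa coordinates of $h g_1$ produces derivatives bounded by $(1+t)^{O(1)}$ multiplied by $\delta$; integrating gives the $\delta(1+t)^{O(1)}$ term. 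The contribution from the region outside the main mass — where we cannot freely bound derivatives — is handled by a second application of the truncation/integration-by-parts arguments of Lemmas \ref{truncate_w} and \ref{truncate_P_pm_pm}, yielding the $y^{-2}\tau^{-3/2+\epsilon}$ error. The main obstacle will be verifying that the integration-by-parts bounds in the proof of Lemma \ref{E_est_lemma} are stable under a small left-translation by $h$, which requires revisiting Lemmas \ref{phase_fn_lemma} and \ref{weight_function_lemma_W} and checking that the implicit constants depend polynomially on $t$ alone.
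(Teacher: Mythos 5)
Your plan for $V(g,y)$ matches the paper's: factor $V(g,y)=\phi(g)V(y)$, treat $V(y)$ as a standard Mellin transform (the paper cites \cite{IK04} Lemma 5.4), and get the $\exp(-ct^c)$ decay from the cusp form. Your use of the mean value theorem for property (2) is in the same spirit as the paper's Iwasawa-conjugation computation $\phi(hg)-\phi(g) = O(\delta t_2^{O(1)})$.

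For $\hat{V}$ the skeleton (truncate $w$ via Lemma~\ref{truncate_w}, split $E = E_{\SP}+E_{\LL}+E_{\EE}+E_{\PL}$, show $E_{\EE}$ and $E_{\PL}$ are negligible) is correct, but you misattribute the contributions of the two surviving pieces. You claim that for both $E_{\SP}$ and $E_{\LL}$ the ``main mass is concentrated at $t_1=\tau^{1/2+O(\epsilon)}$'' and that both are bounded by $\tau^{2\RE(w)-3/2+O(\epsilon)}$. Lemma~\ref{E_est_lemma} says this only for $E_{\LL}$. For $E_{\SP}$, the stationary points in $\sS$ have $\theta_0 \in \zed$, $u_0=O(1)$, $\log t_0 = O(1)$ (Lemma~\ref{oscillatory_integral_derivatives}), so the Gaussian factor in $W$ actually forces $t_1, u_1 = (\log\tau)^{O(1)}$, i.e.\ $g_1$ stays essentially bounded. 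It is precisely this boundedness of $g_1$ that makes the $g_1$-integral of $\phi(gg_1)$ against the double-exponentially decaying weight yield the $\exp(-ct^c)$ factor; no part of $E_{\SP}$ sits at $t_1 \asymp \tau^{1/2}$. Conversely, the $y^{-2}\tau^{-3/2+\epsilon}$ error term in (1) comes entirely from the $E_{\LL}$ region where $t_1 \asymp \tau^{1/2}$, $|u_1|\asymp\tau$, and it only appears when $y\ll\tau^\epsilon$. Conflating the two would lead you to over-count and also to miss the crucial localization of $g_1$ that drives the $\exp(-ct^c)$ estimate.

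For property (2) of $\hat{V}$ you propose changing variables $g_1\mapsto h^{-1}g_1$ to push the perturbation from $\phi(g_\xi g_1)$ onto $E$. This is a genuinely different route from the paper and, as you yourself note, would force you to re-derive the stability of all the stationary-phase and integration-by-parts bounds in Lemmas~\ref{phase_fn_lemma}, \ref{weight_function_lemma_W}, \ref{E_est_lemma} under a left-translation of $g_1$ --- nontrivial extra work. The paper avoids this entirely: it leaves $E(g_1,w)$ untouched, reduces to the $E_{\SP}$ region (where $g_1$ is bounded, as above), and bounds $\phi(hgg_1)-\phi(gg_1)$ directly using the Iwasawa decomposition exactly as in the untwisted $V$ case, giving $\ll \delta\bigl(1+t\,u_1^{O(1)}t_1^{O(1)}\bigr)^{O(1)}$ which integrates against the Gaussian to the stated $\delta(1+t)^{O(1)}$. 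Your plan is probably salvageable but strictly harder than necessary; the paper's observation that the $E_{\SP}$ localization confines $g_1$ to a fixed compact is the key simplification you are missing.
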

\begin{proof}
We have $V(g,y) = \phi(g) V(y)$ where $V(y)$ can be analyzed as in \cite{IK04} Lemma 5.4.  This suffices to prove the bounds for $y^a \partial_y^a V(g,y)$.  The decay in $t$ follows from the rapid decay of the cusp form. Let $h = n_{u_1}a_{t_1}k_{\theta_1}$, $g = n_{u_2}a_{t_2}k_{\theta_2}$, then
\begin{equation}
 hg = n_{u_1}a_{t_1} n_{u_2}a_{t_2}(n_{u_2}a_{t_2})^{-1}k_{\theta_1}(n_{u_2} a_{t_2})k_{\theta_2}.
\end{equation}
Write
\begin{equation}
 (n_{u_2}a_{t_2})^{-1} k_{\theta_1} n_{u_2}a_{t_2} = n_{u_1'}a_{t_1'}k_{\theta_1'},
\end{equation}
with $|u_1'|, |t_1'-1|, |\theta_1'| = O\left(\delta t_2^{O(1)}\right)$.
Thus 
\begin{equation}
 \phi(hg) - \phi(g) = \phi(n_{u_1 + t_1^2 u_2 + t_1^2t_2^2 u_1'}a_{t_1t_2t_1'}) - \phi(n_{u_2}a_{t_2}) = O\left(\delta t_2^{O(1)}\right).
\end{equation}

Recall that $\hat{V}$ is given by 
\begin{equation}
 \hat{V}(g,y) = \frac{1}{K_\nu(2)\sqrt{\pi}} \frac{1}{2\pi i} \oint_{\RE(w)=2} \frac{1}{y^w} \frac{G(w)}{w} \frac{\Gamma(k+2s-2w)}{\Gamma(k+2s)} \int_{g_1 \in G^1} \phi(gg_1) E(g_1,w)dg_1dw,
\end{equation}
with
\begin{align}
 &E(g_1, w) =\\
  \notag & \int_0^\infty\int_{G^1}  \lambda^{4(1-s+w)} \exp\left(- \tr g_2^t (g_1^{-1})^t g_1^{-1}g_2 \right)e\left(\lambda \sqrt{P_{\pm,\pm}(t,u)}c(\theta)\right) \frac{dt}{t^3} du d\theta \frac{d\lambda}{\lambda}
\end{align}
and the trace in the exponential given by
 \begin{equation}
 \left(\frac{t_1}{t_2}\right)^2 + \left(\frac{t_2}{t_1}\right)^2 + \left(\frac{t_2}{t_1}u_2 - \frac{t_1}{t_2}u_1 \right)^2.
\end{equation}

By Lemmas \ref{truncate_w} and \ref{truncate_P_pm_pm}, the part of the integral in $w$ with $|\IM(w)|> \tau^{\frac{1}{2}+\epsilon}$ and the part of the integral over $g_2$ with $y P_{\pm,\pm}^2 > \tau^{2+\epsilon}$ can be discarded while making error $O_{\epsilon,A}(y^{-2}\tau^{-A})$.  Similarly, arguing as in Lemma \ref{E_est_lemma}, truncate to $\lambda = \tau^{O(1)}$ with the same error.  The above considerations truncate $\lambda, t_2$ and $u_2$ in $E(g_1,w)$ to all have size $\tau^{O(1)}$.  Using the exponential factor in this integral, truncate the Iwasawa coordinates in $g_1$ to $u_1, t_1 = \tau^{O(1)}$ with the same error.  Now apply the bounds in Lemma \ref{E_est_lemma} to reduce $E(g_1,w)$ to the two stationary phase terms $E_{\SP}$ and $E_{\PL}$ while making error $O(y^{-2}\tau^{-A})$, in the case of $E_{\SP}$ we can take $u_1, t_1 = (\log \tau)^{O(1)}$.

When $y < \tau^2$, in the terms from $E_{\SP}$ shift the contour to $\RE(w) = -\epsilon$, bounding the horizontal integrals by $O_A(\tau^{-A})$. There is a pole at 0, which we bound together with the $w$ integral.  Due to the rapid decay of the $w$ integral, and applying the bounds of Lemma \ref{E_est_lemma} these  are bounded by
\begin{equation}
 O_A(\tau^{-A}) + \sum_{(\lambda_0, \theta_0, t_0, u_0) \in \sS}\int_{G^1} \phi(gg_1) \exp\left(-\left(\frac{t_1}{t_0} \right)^2 - \left(\frac{t_0}{t_1} \right)^2 - \left(\frac{t_0}{t_1} u_0 - \frac{t_1}{t_0} u_1 \right)^2 \right)\frac{dt_1}{t_1^3} du_1 d\theta_1.
\end{equation}
Write \begin{align}gg_1 = n_u a_t k_\theta n_{u_1} a_{t_1}k_{\theta_1} = n_u a_t n_{u_1} a_{t_1} ((n_{u_1} a_{t_1})^{-1} k_\theta n_{u_1} a_{t_1}) k_{\theta_1}.\end{align}
Let $n_{u_2}a_{t_2} k_{\theta_2} = ((n_{u_1} a_{t_1})^{-1} k_\theta n_{u_1} a_{t_1})$, so that $t_2 = u_1^{O(1)} t_1^{O(1)}$.  Hence if $g g_1 = n_{u'} a_{t'} k_{\theta'}$ then $t' = t t_1^{O(1)} u_1^{O(1)}$.  Since $\phi$ decays singly exponentially in $t'$ and the exponential factor decays doubly exponentially in $t_1$ and $t_1 u_1$, the integral is bounded by $\exp(-c t^{c})$ for some $c > 0$.
If $y < \tau^\epsilon$, there is an additional contribution from $\E_{\PL}$ which occurs when $\tau^{1-O(\epsilon)} < |u_1| < \tau^{1 + O(\epsilon)}$, and $\frac{u_1^2}{t_1^4} = (\log \tau)^{O(1)}$.  In this case integrate the $w$ integral on the $\RE(w) = 2$ line.  The ratio of Gamma factors is bounded by $\ll \tau^{-2 \RE(w)}$, which balances the $w$ contribution from $E_{\PL}$.  Bounding the Maass form by a constant, this obtains a bound of 
\begin{align}
 O_A(\tau^{-A}) + \int_{\tau^{1-O(\epsilon)}}^{\tau^{1 + O(\epsilon)}} du_1 \int_{|u_1|^{\frac{1}{2}}\tau^{-\epsilon}}^{|u_1|^{\frac{1}{2}} \tau^\epsilon} \frac{dt_1}{t_1^3}  y^{-2} \tau^{-\frac{3}{2} + O(\epsilon)}= O_A(\tau^{-A}) + O\left(y^{-2} \tau^{-\frac{3}{2} + O(\epsilon)}\right).
\end{align}
When $y > \tau^2$, only $E_{\SP}$ needs to be considered.  To obtain the estimate in (1), bound the integral between the Maass form and exponential function as before with $\RE(w)=2$.

To prove (2), in both $\hat{V}(hg, y)$ and $\hat{V}(g,y)$, bound all but the main term from $E_{\SP}$ as for (1), absorbing the errors in the second error term.  Arguing as before, this reduces to bounding the integral
\begin{align}
 &\sum_{(\lambda_0, \theta_0, t_0, u_0) \in \sS} \int_{u_1, t_1 = (\log t)^{O(1)}}\left[\phi(hgg_1)-\phi(gg_1)\right]\\ &\notag \times \exp\left(-\left(\frac{t_1}{t_0} \right)^2 - \left(\frac{t_0}{t_1} \right)^2 - \left(\frac{t_0}{t_1} u_0 - \frac{t_1}{t_0} u_1 \right)^2 \right) \frac{dt_1}{t_1^3} du_1 \ll \delta (1+ t)^{O(1)}.
\end{align}
As above, write $gg_1 = n_{u'} a_{t'} k_{\theta'}$ with $u' = u + u_1^{O(1)}t_1^{O(1)}$, $t' = t u_1^{O(1)}t_1^{O(1)}$.  Then as before, 
\begin{equation}
 |\phi(hgg_1)-\phi(gg_1)| \ll\delta\left(1 + t u_1^{O(1)}t_1^{O(1)}\right)^{O(1)}
\end{equation}
and integrating against the exponential factor obtains the claimed bound.

To prove (3), differentiate under the integral sign.  The proof of the estimate is the same as that for (1) when $y \ll \tau^2$.  When $y \gg \tau^2$, bound $E_{\SP}$ by pushing the integral arbitrarily far to the right and estimate as before.
\end{proof}

\subsection{The singular terms}
In this Section we prove the following estimate for singular forms, which, combined  with Lemma \ref{V_function_lemma}  proves Theorem \ref{twisted_afe_theorem}.
\begin{lemma}\label{singular_form_lemma}
 We have the bound 
 \begin{equation}
\frac{2}{\sqrt{\pi}K_\nu(2)\Gamma(2s+k)}   \frac{1}{2\pi i} \oint_{\RE(w)=2} \Sigma^{\pm}(s-w, \phi, F_{\pm}) G(w) \frac{dw}{w} = O_A(\tau^{-A}).
 \end{equation}

\end{lemma}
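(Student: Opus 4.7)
The plan is to show that the singular term $\Sigma^{\pm}(s, \phi, F_\pm)$ vanishes identically as a meromorphic function of $s$ when $\phi$ is cuspidal, whence the contour integral is trivially zero and in particular $O_A(\tau^{-A})$. The vanishing reflects the orthogonality of cusp forms to the constant function and to the continuous (Eisenstein) spectrum of $\Gamma \backslash G^1$, which manifests here through cuspidality applied orbit-by-orbit to the singular set $\hat{V}_\zed \cap S$.

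First I would decompose $\hat{V}_\zed \cap S$ into $\Gamma$-orbits. Any nonzero singular form $\xi \in \hat{V}_\zed$ factors uniquely as $\xi = L^2 M$ with $L \in \zed^2$ a primitive linear form (up to sign) and $M \in \zed^2 \setminus \{0\}$. The orbits split into the zero orbit $\{0\}$; the rank-one (cube) orbits, where $M \in \bZ L$ and a base point $c x^3$ has $\SL_2(\bR)$-stabilizer equal to the upper unipotent subgroup $N^+$; and the rank-two orbits, where $M \notin \bZ L$ and the $\SL_2(\bR)$-stabilizer is trivial. For the zero orbit, the contribution is $\hat{F}_\pm(0) \int_{G^1/\Gamma} \phi(g^{-1})\, dg$ times a divergent $\lambda$-integral (handled by analytic continuation via the contour in $w$), and the $\phi$-integral vanishes by cuspidality. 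For the cube orbits with base point $\xi_0 = c x^3$, unfolding against the $\Gamma$-stabilizer $N^+_\zed = N^+ \cap \Gamma$ produces $\int_{G^1/N^+_\zed} \phi(g^{-1}) \hat{F}_\pm(\lambda g \xi_0)\, dg$; since $N^+$ stabilizes $\xi_0$, the $\hat{F}_\pm$-factor depends only on the $N^+$-coset of $g$, and decomposing $G^1/N^+_\zed = (G^1/N^+) \times (N^+/N^+_\zed)$ with $N^+/N^+_\zed \cong \bR/\zed$, the inner integral of $\phi$ over the upper unipotent period vanishes by the absence of a constant Fourier coefficient in the cuspidal expansion.

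For the rank-two orbits, no nontrivial stabilizer argument is directly available, and I would exploit the symmetry $\phi(g) = \phi(g^\iota)$ combined with left $\Gamma$-invariance to upgrade $\phi$ to be right $\Gamma$-invariant on $G^1$: since $\gamma^\iota \in \Gamma$ for $\gamma \in \Gamma$, the computation $\phi(h\gamma) = \phi((h\gamma)^\iota) = \phi(\gamma^\iota h^\iota) = \phi(h^\iota) = \phi(h)$ confirms this. This bi-$\Gamma$-invariance converts the unfolded orbital integral $\int_{G^1} \phi(g^{-1}) \hat{F}_\pm(\lambda g \cdot \xi_0)\, dg$ into $\int_{\sF} \phi(g^{-1}) \Theta_{\xi_0}(\lambda g)\, dg$, where $\Theta_{\xi_0}(h) = \sum_{\gamma \in \Gamma} \hat{F}_\pm(\gamma h \cdot \xi_0)$ is a $\Gamma$-periodization automorphic in its argument. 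Since the rank-two orbit is a homogeneous space with Borel-type stabilizer structure in $G^+$, the periodization $\Theta_{\xi_0}$ lies in the continuous (Eisenstein) part of $L^2(\Gamma \backslash G^1)$, and $\phi$ being cuspidal is orthogonal to this component, so the integral vanishes. Summing the orbit contributions, $\Sigma^\pm \equiv 0$ in the region of absolute convergence, and by meromorphic continuation the contour integral is exactly zero.

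The main obstacle is the rank-two case, where rigorously verifying that the periodization $\Theta_{\xi_0}$ has no cuspidal component requires either explicit Fourier decomposition into incomplete Eisenstein series or a direct orbital computation, for example via Poisson summation on the $M$-lattice for each fixed primitive $L$. Should the spectral argument prove elusive, a fallback is to shift the $w$-contour rightward to $\RE(w) = R$: $\Sigma^\pm$ carries a natural $\Gamma(2(s-w)+k)$ prefactor (arising as in Lemma \ref{N_pm_representation} from the $\lambda$-integration against the test function), so the ratio $\Gamma(2s-2w+k)/\Gamma(2s+k)$ decays by Stirling like $\tau^{-2\RE(w)}$, while $G(w) = e^{w^2}$ gives Gaussian decay in $\IM(w)$ on any vertical strip; balancing at $R \sim \log \tau$ yields the bound $O(e^{-c(\log \tau)^2}) = O_A(\tau^{-A})$ for all $A$, with any poles of $\Sigma^\pm$ crossed during the shift contributing residues bounded by the $G(w)$ factor at the same scale.
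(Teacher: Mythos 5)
Your primary argument that $\Sigma^{\pm} \equiv 0$ is incorrect, and this is the central gap. The vanishing by cuspidality that you correctly identify for the zero orbit (mean zero) and for the cube orbit (no constant Fourier coefficient) does \emph{not} extend to the rank-two orbits. For the rank-two forms $(0,0,3m,n)$ the $\Gamma$-stabilizer is trivial, so the orbital integral unfolds to all of $G^1$; the relevant inner integral is not a period over $N^+/N^+_\zed$ picking out the zeroth Fourier coefficient, but instead, as the paper's explicit computation shows, it couples to the \emph{non-constant} Fourier coefficients of $\phi$. Concretely, after inserting the Fourier expansion of $\phi$ and summing over $n$ modulo $3m$ one obtains a sum over $\ell,m$ weighted by $\rho_\phi(3\ell m)$, which is manifestly nonzero for a cusp form. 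Your intermediate claim that the periodization $\Theta_{\xi_0}(h)=\sum_{\gamma\in\Gamma}\hat F_{\pm}(\gamma h\cdot\xi_0)$ lies in the continuous spectrum is unjustified: because the $\Gamma$-stabilizer of $\xi_0$ is trivial, $\Theta_{\xi_0}$ is a Poincar\'e-type sum with a generic seed, and such sums are not orthogonal to cusp forms --- indeed, if they were, the bulk of the paper's singular-form computation would have been unnecessary.

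Your fallback is also unsound as stated. You propose shifting $\RE(w)$ rightward and balancing using only the decay of $\Gamma(2s+k-2w)/\Gamma(2s+k)\asymp\tau^{-2\RE(w)}$, but this overlooks that the explicit evaluation of $\Sigma^{\pm}$ produces further Gamma factors from the Mellin transform of a product of two $K$-Bessel functions, specifically $\Gamma(3(1-s+w)-\tfrac14+\tfrac\nu2)\,\Gamma(1-s+w+\tfrac14-\tfrac\nu2)/\Gamma(-1+2s-2w)$. These contribute a factor of order $\tau^{+6\RE(w)}$ that overwhelms the $\tau^{-2\RE(w)}$ gain, so moving right does not in itself provide polynomial savings. (One also crosses poles of $\Gamma(2s+k-2w)$ beginning at $\RE(w)=\tfrac12+\tfrac k2$, which must be handled.) The actual mechanism in the paper is to first carry out the Bessel-Mellin reduction of $\Sigma^{\pm}$, then shift to $\RE(w)=\epsilon$, truncate to $|\IM(w)|\le\tau^{1/2+\epsilon}$ using the Gaussian decay of $G(w)$, and apply Stirling to the \emph{full} ratio of Gamma factors, which on that truncated segment exhibits exponential decay of the form $e^{-\pi\tau+O(\tau^{1/2+\epsilon})}$. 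Without the explicit Mellin-Barnes form of $\Sigma^{\pm}$ this estimate cannot be made, so the fallback cannot substitute for the missing computation.
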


Taken together the singular terms have sufficient decay for the integral to be convergent, but taken separately they can diverge. 
We now regularize the singular terms by introducing a smooth weight $w_z(\lambda) = e^{-z(\lambda + \lambda^{-1})}$ which tends to 1 as $z \to 0$.  Consider  
\begin{align}
 \Sigma^{\pm}(s-w, \phi, F_{\pm}) &= \lim_{z\downarrow 0}\int_{G^1/\Gamma} \int_0^\infty w_z(\lambda) \lambda^{4(1-s+w)} \phi(g^{-1}) \sum_{\xi \in \hat{V}_\zed \cap S} \hat{F}_{\pm}(\lambda g \cdot \xi) \frac{d\lambda}{\lambda} dg.
\end{align}
Note that $\hat{F}_\pm$ is $K$ invariant so that the integrals over $K$ are omitted in this section.
The singular terms have the fibration 
\begin{equation}
 \hat{L}_0 = \{0\} \sqcup \bigsqcup_{m=1}^\infty \bigsqcup_{\gamma \in \Gamma/(\Gamma \cap N)}\{\gamma\cdot (0,0,0,m)\} \sqcup \bigsqcup_{m=1}^\infty \bigsqcup_{n=0}^{3m-1} \bigsqcup_{\gamma \in \Gamma} \{\gamma \cdot(0,0,3m,n)\}.
\end{equation}
As in \cite{H19}, integration over the first two terms of the fibration vanishes due to the cusp form.  To see this, in the case of $\{0\}$, the integral is
\begin{equation}
\lim_{z \downarrow 0}\hat{F}_{\pm}(0)\int_0^\infty w_z(\lambda)\lambda^{4(1-s+w)} \frac{d\lambda}{\lambda} \int_{G^1/\Gamma} \phi(g^{-1}) dg = 0
\end{equation}
since $\phi$ is mean 0.  Similarly, the second term in the fibration is given by
\begin{equation}
 \lim_{z \downarrow 0} \int_0^\infty w_z(\lambda) \lambda^{4(1-s+w)}\frac{d\lambda}{\lambda} \int_0^1 du \int_0^\infty \frac{dt}{t^3} \phi(n_{u}^t a_{\frac{1}{t}}) \sum_{m = 1}^\infty \hat{F}_{\pm}(\lambda a_{\frac{1}{t}} \cdot (0,0,0,m)) 
\end{equation}
which again vanishes by integrating in $u$, since $(0,0,0,m)$ is stabilized by $n_u$ and the cusp form does not have a constant term in its Fourier expansion.

The remaining singular terms are given by
\begin{align}
  \Sigma^{\pm}(s-w, \phi, F_{\pm})=&\lim_{z \downarrow 0}\int_0^\infty w_z(\lambda) \lambda^{4(1-s+w)} \int_{-\infty}^\infty \int_{0}^\infty \phi(n_{u}^t a_{\frac{1}{t}}) \\&\notag\times\sum_{m=1}^\infty \sum_{n=0}^{3m-1} \hat{F}(\lambda  a_{\frac{1}{t}} n_u \cdot (0,0,3m,n)) \frac{d\lambda}{\lambda} \frac{dt}{t^3} du.
 \end{align}
 Insert the Fourier series for $\phi$ to obtain
 \begin{align}
 \Sigma^{\pm}(s-w, \phi, F_{\pm})=&
 \lim_{z \downarrow 0}2\int_0^\infty w_z(\lambda) \lambda^{4(1-s+w)}\int_{-\infty}^\infty \int_0^\infty \sum_{\ell = 1}^\infty \rho_\phi(\ell) K_{\nu}(2\pi \ell t^2) \cos(2\pi \ell u)\\ \notag&\times \sum_{m=1}^\infty \sum_{n=0}^{3m-1} \hat{F}\left(0,0, \frac{3m\lambda}{t}, \frac{\lambda(n + 3um)}{t^3}\right) \frac{d\lambda}{\lambda}\frac{dt}{t^2} du.
 \end{align}
 Change variable in $u$ and sum in $n$ to select $3m|\ell$ which we rewrite as $\ell = 3m\ell'$.  This obtains 
 \begin{align}
 \Sigma^{\pm}(s-w, \phi, F_{\pm})=&\lim_{z \downarrow 0} 2\int_0^\infty \lambda^{4(1-s+w)-1} \int_{-\infty}^\infty \int_0^\infty \sum_{\ell, m=1}^\infty\rho_\phi(3\ell m) K_\nu(6\pi \ell m t^2) \\\notag &\times \cos\left(\frac{6\pi \ell m t^3 u}{\lambda}\right) \hat{F}\left(0,0, \frac{3m\lambda}{t}, u\right)\frac{d\lambda}{\lambda} tdt du.
 \end{align}
 Integrating in $u$, and writing $\hat{F}_4$ for the Fourier transform taken in the first three variables but not the fourth, obtains
 \begin{align}
 \Sigma^{\pm}(s-w, \phi, F_{\pm})=&\lim_{z \downarrow 0} \sum_{\varepsilon = \pm }  \int_0^\infty w_z(\lambda)\lambda^{4(1-s+w)-1} \int_0^\infty\\ \notag &\times \sum_{\ell, m=1}^\infty \rho_\phi(3\ell m)K_\nu(6\pi \ell m t^2)\hat{F}_4\left(0,0, \frac{3m\lambda}{t}, \frac{\varepsilon 3\ell m t^3}{\lambda} \right)\frac{d\lambda}{\lambda} tdt.
\end{align}
Evidently there is no longer a issue of convergence so that the weight $w_z(\lambda)$ may be removed.  This is because the exponential decay of $K_\nu$ controls the large $t,m,\ell$ behavior, while when $t$ is small, the third slot of the Fourier transform essentially bounds $\lambda \ll t$ and $\ell m \ll t^{-2}$ which gains the convergence in $t$ for $\RE(w) > -\frac{1}{4}.$  

Let $\alpha = \frac{3m\lambda}{t}$, $\beta = \frac{3\ell m t^3}{\lambda}$.  This obtains
\begin{align}
 \Sigma^{\pm}(s-w, \phi, F_{\pm})=&\frac{1}{9} \sum_{\varepsilon = \pm}\int_0^\infty \int_0^\infty \alpha^{6(1-s+w)-\frac{1}{2}} \beta^{2(1-s+w)-\frac{1}{2}}\\ \notag &\times \sum_{\ell, m=1}^\infty \frac{\rho_\phi(3\ell m) K_\nu\left(\frac{2\pi \alpha \beta}{3m}\right)}{\ell^{2(1-s+w)+\frac{1}{2}}(3m)^{8(1-s+w)}}\hat{F}_4(0,0,\alpha, \varepsilon \beta) \frac{d\alpha}{\alpha}d\beta
 \end{align}
 We now open the Fourier transform as an integral over $V_{\bR}$. This gives
 \begin{align}
 \Sigma^{\pm}(s-w, \phi, F_{\pm})=&\frac{1}{9}\sum_{\varepsilon = \pm}\int_0^\infty \int_0^\infty \alpha^{6(1-s+w)-\frac{1}{2}} \beta^{2(1-s+w)-\frac{1}{2}} \sum_{\ell, m=1}^\infty \frac{\rho_\phi(3\ell m) K_\nu\left(\frac{2\pi \alpha \beta}{3m}\right)}{\ell^{2(1-s+w)+\frac{1}{2}}(3m)^{8(1-s+w)}} \\&\notag\times\int_{\bR^3} F_{\pm}(\varepsilon \beta, x_2, x_3, x_4) e^{-2\pi i \frac{\alpha x_2}{3}} \frac{d\alpha}{\alpha} d\beta dx
  \end{align}
 We now write the integral over $\beta, x_2, x_3, x_4$ in $V_{\bR}$ as an integral over $G^+$ using the orbit description of $F_{\pm}$. In what follows we give the proof for $V_-$, the proof in the case of $V_+$ being similar.  In this case, the first coordinate $\beta$ is given by $\frac{\lambda s(\theta)}{\sqrt{2}t^3}$, which we impose the constraint is positive.  This obtains
    \begin{align}
 \Sigma^{\pm}(s-w, \phi, F_{\pm})&=  \frac{1}{9}\int_0^\infty \alpha^{6(1-s+w)-\frac{1}{2}} \int_{\theta\in [0, 1]} \int_{-\infty}^\infty \int_0^\infty \int_0^\infty \left|\frac{\lambda}{\sqrt{2}} \frac{s( \theta)}{t^3} \right|^{2(1-s+w)-\frac{1}{2}} \\&\times \notag \sum_{\ell, m=1}^\infty \frac{\rho_\phi(3\ell m)K_\nu\left(\frac{2\pi \alpha}{3m}\left(\frac{\lambda s(\theta)}{\sqrt{2}t^3} \right)\right)}{\ell^{2(1-s+w)+\frac{1}{2}}(3m)^{8(1-s+w)}}\\&\notag \times e^{-\frac{2\pi i \alpha}{3}\frac{\lambda}{\sqrt{2}t}\left(c(\theta) +3 s(\theta)u \right)} \lambda^{2k}\exp\left(-\lambda^2  -t^2 - \frac{1}{t^2} - (tu)^2 \right) \lambda^3 d\lambda \frac{dt}{t} du d\theta \frac{d\alpha}{\alpha}
   \end{align}
   After a change of variable in $\alpha$ this obtains
   \begin{align}
  \Sigma^{\pm}(s-w, \phi, F_{\pm})&= \frac{1}{9}\int_0^\infty \alpha^{6(1-s+w)-\frac{1}{2}} \int_{\theta\in [0, 1]} \int_{-\infty}^\infty \int_0^\infty \int_0^\infty \left|\frac{\lambda}{\sqrt{2}} \frac{s( \theta)}{t^3} \right|^{-4(1-s+w)} \\\notag&\times \sum_{\ell, m=1}^\infty \frac{\rho_\phi(3\ell m)K_\nu\left(2\pi \alpha\right)}{(3\ell m)^{2(1-s+w)+\frac{1}{2}}}e^{-2\pi i \alpha mt^2\left(\frac{c(\theta)}{s(\theta)} +3 u \right)} \lambda^{2k}\\&\times \notag\exp\left(-\lambda^2  -t^2 - \frac{1}{t^2} - (tu)^2 \right) \lambda^3 d\lambda \frac{dt}{t} du d\theta \frac{d\alpha}{\alpha}.
  \end{align}
  Integrate in $\theta$ treating this as a cotangent integral, and integrate in $\lambda$.  This obtains 
  \begin{align}
     \Sigma^{\pm}(s-w, \phi, &F_{\pm})=\frac{1}{18}\frac{1}{(2\pi)^{-(1-2s+2w)}} \frac{\Gamma(k+2s-2w)}{\Gamma(-1 + 2s -2w)}\int_0^\infty \alpha^{4(1-s+w)}  \\\notag&\times \int_{-\infty}^\infty \int_0^\infty t^{1 + 8(1-s+w)}\sum_{\ell,m=1}^\infty \frac{\rho_\phi(3\ell m)K_\nu(2\pi \alpha)K_{\frac{1}{2}+(1-2s+2w)}(2\pi \alpha mt^2)}{(3\ell)^{2(1-s+w)+\frac{1}{2}}m^{4(1-s+w)}}\\&\notag\times e^{-6\pi i \alpha mt^2 u} \exp\left(-t^2-\frac{1}{t^2} - (tu)^2\right) \frac{dt}{t} du \frac{d\alpha}{\alpha}.
  \end{align}
Now integrate in $u$
\begin{align}
\Sigma^{\pm}(s-w, \phi, F_{\pm})= &\frac{1}{18\sqrt{\pi}}\frac{1}{(2\pi)^{-(1-2s+2w)}} \frac{\Gamma(k+2s-2w)}{\Gamma(-1 + 2s -2w)}\int_0^\infty \alpha^{4(1-s+w)}\int_0^\infty t^{8(1-s+w)} \\&\notag\times \sum_{\ell,m=1}^\infty \frac{\rho_\phi(3\ell m)K_\nu(2\pi \alpha)K_{\frac{1}{2}+(1-2s+2w)}(2\pi \alpha mt^2)}{(3\ell)^{2(1-s+w)+\frac{1}{2}}m^{4(1-s+w)}}\\&\notag\times  \exp\left(-t^2-\frac{1}{t^2} - 9\pi^2 \alpha^2 m^2 t^2\right) \frac{dt}{t} \frac{d\alpha}{\alpha}.
 \end{align}
 Make a change of variable in $\alpha$ to obtain
 \begin{align}
  \Sigma^{\pm}&(s-w, \phi, F_{\pm})= \frac{1}{18\sqrt{\pi}}\frac{1}{(2\pi)^{-(1-2s+2w)}} \frac{\Gamma(k+2s-2w)}{\Gamma(-1 + 2s -2w)}\int_0^\infty \alpha^{4(1-s+w)}\int_0^\infty  \\\notag&\times \sum_{\ell,m=1}^\infty \frac{\rho_\phi(3\ell m)K_\nu\left(\frac{18\pi \alpha m^2}{t^2}\right)K_{\frac{1}{2}+(1-2s+2w)}(18\pi \alpha m^3) 3^{6(1-s+w)-\frac{1}{2}}m^{4(1-s+w)}} {\ell^{2(1-s+w)+\frac{1}{2}}}\\&\notag\times  \exp\left(-t^2-\frac{1}{t^2} - \frac{81\pi^2 \alpha^2 m^4}{t^2}\right) \frac{dt}{t} \frac{d\alpha}{\alpha}.
 \end{align}
 The integral in $t$ is now one of the standard $K$-Bessel function integrals.  Exercising this obtains
 \begin{align}
& \Sigma^{\pm}(s-w, \phi, F_{\pm})= \frac{K_\nu(2)}{18\sqrt{\pi}}\frac{1}{(2\pi)^{-(1-2s+2w)}} \frac{\Gamma(k+2s-2w)}{\Gamma(-1 + 2s -2w)}\int_0^\infty \alpha^{4(1-s+w)}  \\\notag&\times \sum_{\ell,m=1}^\infty \frac{\rho_\phi(3\ell m)K_\nu\left(9\pi \alpha m^2\right)K_{\frac{1}{2}+(1-2s+2w)}(18\pi \alpha m^3) 3^{6(1-s+w)-\frac{1}{2}}m^{4(1-s+w)}} {\ell^{2(1-s+w)+\frac{1}{2}}} \frac{d\alpha}{\alpha}
\end{align}
After a further change of variable in $\alpha$ this becomes
 \begin{align}
 \Sigma^{\pm}(s-w, \phi, F_{\pm})&= \frac{K_\nu(2)}{18\sqrt{\pi}}\frac{1}{(2\pi)^{-(1-2s+2w)}} \frac{\Gamma(k+2s-2w)}{\Gamma(-1 + 2s -2w)}\int_0^\infty \alpha^{4(1-s+w)}  \\&\notag\times \sum_{\ell,m=1}^\infty \frac{\rho_\phi(3\ell m)K_\nu\left(\frac{\pi \alpha}{m} \right)K_{\frac{1}{2}+(1-2s+2w)}(2\pi \alpha ) } {(3\ell)^{2(1-s+w)+\frac{1}{2}}m^{8(1-s+w)}} \frac{d\alpha}{\alpha}.
\end{align}
The integral in $\alpha$ may now finally be expressed as in Lemma \ref{double_bessel_mellin_eval},
\begin{align}
 \int_0^\infty &K_{\frac{1}{2} + (1-2s+2w)}(2\pi \alpha) K_\nu\left(\frac{\pi \alpha}{m}\right) \alpha^{4(1-s+w)} \frac{d\alpha}{\alpha}\\ \notag
 &= \frac{\pi^{-4(1-s+w)}}{2^{3+\nu}m^\nu}\Gamma\left(\frac{1}{2}\left(6(1-s+w)-\frac{1}{2} + \nu \right) \right)\Gamma\left(\frac{1}{2}\left(2(1-s+w)+\frac{1}{2} - \nu \right) \right)\\
 &\times \notag \int_0^1  t^{1-s+w-\frac{3}{4} - \frac{\nu}{2}}(1-t)^{3(1-s+w)-\frac{5}{4}-\frac{\nu}{2}} \left(1 - \frac{4m^2-1}{4m^2} t\right)^{-3(1-s+w)+\frac{1}{4}-\frac{\nu}{2}}dt.
\end{align}
In the last integral, bound $\left(1 - \frac{4m^2-1}{4m^2} t\right)^{-3(1-s+w)+\frac{1}{4}-\frac{\nu}{2}} \ll m^{6\RE(1-s+w)}$ so that the integral altogether is 
\begin{equation}
 \int_0^1  t^{1-s+w-\frac{3}{4} - \frac{\nu}{2}}(1-t)^{3(1-s+w)-\frac{5}{4}-\frac{\nu}{2}} \left(1 - \frac{4m^2-1}{4m^2} t\right)^{-3(1-s+w)+\frac{1}{4}-\frac{\nu}{2}}dt \ll m^{6\RE(1-s+w)}.
\end{equation}

\begin{proof}[Proof of Lemma \ref{singular_form_lemma}]
 Shift the contour to $\RE(w) = \epsilon$.  This obtains
 \begin{align}
  &\frac{2}{\sqrt{\pi}K_\nu(2)\Gamma(2s+k)}\frac{1}{2\pi i} \oint_{\RE(w) = \epsilon} \Sigma^{\pm}(s-w, \phi, F_{\pm}) G(w)\frac{dw}{w}\\
  \notag &= \frac{1}{9} \oint_{\RE(w) = \epsilon} \frac{G(w)}{w} 2^{-2-2s+2w}\pi^{-4+2s-2w}\\\notag &\times \frac{\Gamma\left(2s+k-2w\right)\Gamma\left(3(1-s+w)-\frac{1}{4}+\frac{\nu}{2} \right)\Gamma\left(1-s+w + \frac{1}{4}-\frac{\nu}{2} \right)}{\Gamma\left(2s+k \right)\Gamma\left(-1+2s-2w \right)}\\& \notag \times \sum_{\ell, m=1}^\infty \frac{\rho_\phi(3\ell m)}{(3\ell)^{2(1-s+w)+\frac{1}{2}}m^{8(1-s+w)}}\\&\notag\times\int_0^1 t^{1-s+w -\frac{3}{4}-\frac{\nu}{2}}(1-t)^{3(1-s+w)-\frac{5}{4}-\frac{\nu}{2}}\left(1-\frac{4m^2-1}{4m^2} \right)^{-3(1-s+w)+\frac{1}{4}-\frac{\nu}{2}}dt dw.
 \end{align}
The sum over $\ell$ and $m$ and the integral in $t$ are uniformly bounded.  The claim now follows since $G(w)$ decays doubly exponentially in $\IM(w)$, so that we can truncate to $|\IM(w)| < \tau^{\frac{1}{2}+\epsilon}$ with error $O_A(\tau^{-A})$.  In this range, by Stirling's approximation the ratio of Gamma factors is $O_A(\tau^{-A})$.
\end{proof}

\bibliographystyle{plain}

\begin{thebibliography}{1}
\bibitem{B54}
Bateman, Harry. 
\newblock \emph{Tables of integral transforms}. Vol. 1. McGraw-Hill Book Company, 1954.

\bibitem{B05}
Bhargava, Manjul. 
\newblock ``The density of discriminants of quartic rings and fields." 
\newblock \emph{Annals of Mathematics} (2005): 1031-1063.

\bibitem{B10}
Bhargava, Manjul.
\newblock ``The density of discriminants of quintic rings and fields.'' \emph{Ann. of Math.} (2) 172 (2010), no. 3, 1559–1591. 

\bibitem{BG14}
Bhargava, Manjul, Gross, Benedict H. 
\newblock \emph{Arithmetic invariant theory. Symmetry: representation theory and its applications}, 
\newblock 33–54, Progr. Math., 257, Birkhäuser/Springer, New York, 2014. 

\bibitem{BH16}
Bhargava, Manjul and Harron, Piper.
\newblock ``The equidistribution of lattice shapes of rings of integers in cubic, quartic, and quintic number fields.''
\newblock \emph{Compos. Math.} 152 (2016), no. 6, 1111–1120.

\bibitem{BS15a}
Bhargava, Manjul and Shankar, Arul.
\newblock ``Ternary cubic forms having bounded invariants, and the existence of a positive proportion of elliptic curves having rank 0.''
\newblock \emph{Ann. of Math.} (2) 181 (2015), no. 2, 587–621.

\bibitem{BS15b}
Bhargava, Manjul and Shankar, Arul.
\newblock ``Binary quartic forms having bounded invariants, and the boundedness of the average rank of elliptic curves.'' \newblock \emph{Ann. of Math.} (2) 181 (2015), no. 1, 191–242.


\bibitem{BST13}
Bhargava, Manjul, Arul Shankar, and Jacob Tsimerman. 
\newblock ``On the Davenport–Heilbronn theorems and second order terms." 
\emph{Inventiones mathematicae} 193.2 (2013): 439-499.

\bibitem{BV15}
Bhargava, Manjul and Varma, Ila.
\newblock ``On the mean number of 2-torsion elements in the class groups, narrow class groups, and ideal groups of cubic orders and fields.''
\newblock \emph{Duke Math. J.} 164 (2015), no. 10, 1911–1933.


\bibitem{B20}
Blomer, Valentin. 
\newblock ``Epstein zeta-functions, subconvexity, and the purity conjecture." 
\newblock \emph{Journal of the Institute of Mathematics of Jussieu} 19.2 (2020): 
581-596.

\bibitem{DH71}
Davenport, Harold, and Hans Arnold Heilbronn. 
\newblock ``On the density of discriminants of cubic fields. II." 
\newblock \emph{Proceedings of the Royal Society of London. A. Mathematical and Physical Sciences} 322.1551 (1971): 405-420.

\bibitem{H17}
Hough, Bob. 
\newblock ``Maass form twisted Shintani $\sL$-functions." 
\newblock \emph{Proceedings of the American Mathematical Society} 145.10 (2017): 
4161-4174.

\bibitem{H19}
Hough, Robert. 
\newblock ``The shape of cubic fields." \emph{Research in the Mathematical 
Sciences} 6.3 (2019): 23.

\bibitem{HL22}
Hough, Robert and Eun Hye Lee.
\newblock ``Subconvexity of Shintani's zeta function.''
\newblock Preprint, 2022.

\bibitem{IK04}
Iwaniec, Henryk, and Emmanuel Kowalski. \newblock \emph{Analytic number theory.} 
Vol. 53. American Mathematical Soc., 2004.

\bibitem{K03}
Kim, H. H. 
\newblock ``Functoriality for the exterior square of $\mathrm {GL} _ {4} $ and the symmetric fourth of $\mathrm {GL} _ {2} $, with appendix 1 ``A descent criterion for isobaric representations” by D. Ramakrishnan and appendix 2 ``Refined estimates towards the Ramanujan and Selberg conjectures” by H. Kim and P. Sarnak." 
\newblock \emph{J. Amer. Math. Soc.} 16 (2003): 139-183.

\bibitem{M18}
Munshi, Ritabrata. 
\newblock ``The subconvexity problem for L-functions." 
\newblock \emph{Proceedings of the International Congress of Mathematicians-Rio} 
 Vol. 2. 2018.

\bibitem{S94}
Sato, Fumihiro. 
\newblock ``Zeta functions of prehomogeneous vector spaces with coefficients related to periods of automorphic forms." 
\newblock \emph{Proceedings Mathematical Sciences} 104.1 (1994): 99-135.
 
 
\bibitem{SS74}
Sato, Mikio; Shintani, Takuro. 
\newblock ``On zeta functions associated with prehomogeneous vector spaces.''
\newblock \emph{Ann. of Math.} (2) 100 (1974), 131--170. 


\bibitem{S72}
Shintani, Takuro.
\newblock ``On Dirichlet series whose coefficients are class numbers of integral 
binary cubic forms.''
\newblock \emph{J. Math. Soc. Japan}  24  1972 132–188.

\bibitem{S93}
Stein, Elias M., and Timothy S. Murphy. 
\newblock \emph{Harmonic analysis: real-variable methods, orthogonality, and oscillatory integrals.}
\newblock Vol. 3. Princeton University Press, 1993.

\bibitem{TT13}
Taniguchi, Takashi, and Frank Thorne. 
\newblock ``Secondary terms in counting functions for cubic fields." \newblock 
\emph{Duke Mathematical Journal} 162.13 (2013): 2451-2508.

\bibitem{T10}
Thorne, Frank. 
\newblock ``Analytic properties of Shintani zeta functions.'' Proceedings of the 
RIMS Symposium on automorphic forms, automorphic representations, and related 
topics, Kyoto. (2010).

\bibitem{T14a}
Thorne, Frank. 
\newblock ``Shintani’s zeta function is not a finite sum of Euler products." 
\newblock \emph{Proceedings of the American Mathematical Society} 142.6 (2014): 
1943-1952.

\bibitem{T14b}
Thorne, Frank.
\newblock ``Analytic number theory and zeta function methods.'' Proceedings of the 2014 Summer School on Counting Arithmetic Objects, Montreal. \url{https://people.math.sc.edu/thornef/montreal-book.pdf}

\end{thebibliography}

\end{document}